	\theoremstyle{plain}
		\newtheorem{theorem}{Theorem}[section]
		\newtheorem{corollary}[theorem]{Corollary}
		\newtheorem{lemma}[theorem]{Lemma}
		\newtheorem{proposition}[theorem]{Proposition}
		\newtheorem{algorithm}[theorem]{Algorithm}
	\theoremstyle{definition}
		\newtheorem{definition}[theorem]{Definition}
	\theoremstyle{remark}
		\newtheorem{remark}[theorem]{Remark}
\definecolor{light-gray}{gray}{0.9}
\definecolor{med-gray}{gray}{0.75}
\definecolor{dark-gray}{gray}{0.6}
\definecolor{lapislazuli}{rgb}{0.15, 0.38, 0.61}
\definecolor{crimson}{rgb}{0.86, 0.08, 0.24}
\newcommand{\blue}{\color{lapislazuli}\makeatletter\let\default\current\makeatother}
\newcommand{\red}{\color{crimson}\makeatletter\let\default\current\makeatother}
\definecolor{cadmiumgreen}{rgb}{0.0, 0.42, 0.24}
\newcommand{\green}{\color{cadmiumgreen}\makeatletter\let\default\current\makeatother}
\newcommand{\Z}{\mathbb{Z}}
\newcommand{\Zset}{\mathbb{Z}}
\newcommand{\Nset}{\mathbb{N}}
\renewcommand{\=}{\coloneqq}	 
\DeclareMathOperator{\id}{Id}
\DeclareMathOperator{\im}{Im} 
\DeclareMathOperator{\kr}{Ker} 
\DeclareMathOperator{\coker}{Coker} 
\DeclareMathOperator{\gf}{GenFlin}
\DeclareMathOperator{\TEZ}{TEZ}
\DeclareMathOperator{\sg}{sg}		
\newcommand{\verteq}{\rotatebox{90}{$\! =$}} 
\newcommand{\ql}{\textquotedblleft}
\newcommand{\qr}{\textquotedblright}
\newcommand{\rrdc}{\mbox{\,\(\Rightarrow\hspace{-9pt}\Rightarrow\)\,}}
\newcommand{\lrdc}{\mbox{\,\(\Leftarrow\hspace{-9pt}\Leftarrow\)\,}}
\newcommand{\lrrdc}{\mbox{\,\(\Leftarrow\hspace{-9pt}\Leftarrow\hspace{-5pt}\Rightarrow\hspace{-9pt}\Rightarrow\)\,}} 
\newcommand{\algitem}{\begin{list}{$\bullet$}{\setlength{\topsep}{0pt}}
  \setlength{\itemsep}{0pt}}
\begin{document}

\title{Computing higher Leray--Serre spectral sequences of towers of fibrations}

\author[1]{Andrea Guidolin}
\author[2]{Ana Romero}

\affil[1]{Basque Centre for Applied Mathematics, Bilbao, Spain}
\affil[2]{University of La Rioja, Logro\~no, Spain}

\date{} 

\maketitle

\begin{abstract} 
 The higher Leray--Serre spectral sequence associated with a tower of fibrations  represents a generalization of the classical Leray--Serre spectral sequence of a fibration. In this work, we present algorithms to compute higher Leray--Serre spectral sequences leveraging the effective homology technique, which allows to perform computations involving  chain complexes of infinite type associated with interesting objects in algebraic topology. In order to develop the programs, implemented as a new module for the Computer Algebra system Kenzo, we translated the original construction of the higher Leray--Serre spectral sequence in a simplicial framework  and studied some of its fundamental properties.
\end{abstract}

\section{Introduction}
\label{sec:intro}

In algebraic topology and homological algebra, spectral sequences are constructions arising in a quite natural way from a filtration of a chain complex, that is a collection $\{F_p C_*\}_{p\in \Z}$ of nested chain subcomplexes indexed over the integers.
In a recent work~\cite{matschke2013successive}, B. Matschke proposed a generalization of the notion of spectral sequence, formulating a theory which allows to construct a generalized spectral sequence, or \emph{spectral system}, from filtrations indexed over any partially ordered set. 
This notion is refined in \cite{Matschke2014a} in the special case of spectral systems arising from chain complexes filtered over the integers in several different ways, called \emph{higher spectral sequences}.
One of the main motivations of Matschke was to describe a mathematical object general enough to unify several spectral sequences which usually one would apply in succession. This is the case of the higher Leray--Serre spectral sequence associated with a tower of fibrations, one of the motivating examples of the work, which generalizes the classical Leray--Serre spectral sequence \cite{Ser51} relating the homology of the base, fiber and total space of a fibration. 

Handling spectral systems and higher spectral sequences may be daunting, since they look more technical than usual spectral sequences and they involve more complicated bookkeeping and manipulation of the filtration. On the other hand, computing them is highly desirable, as they are able to detect finer details then ordinary spectral sequences. In particular, higher Leray--Serre spectral sequences contain more information on the involved towers of fibrations than ordinary spectral sequences associated with each fibration of the tower. The overarching purpose of the present work is to allow the use of these finer details in practice, which is conditional on the possibility of explicitly computing higher Leray--Serre spectral sequences. 
 
Like ordinary spectral sequences, spectral systems and higher spectral sequences are not \emph{algorithms}, meaning that they cannot always be immediately computed from a filtration of a chain complex. In particular, the notion of spectral system and higher spectral sequence involves groups and differential maps which, despite being mathematically well-defined, in many situations cannot be computationally determined. This is the case, for example, that occurs when the considered chain complex is of infinite type, a quite common situation with chain complexes associated with interesting objects, for instance in algebraic topology. 

In this work, we present general algorithms to compute the higher Leray--Serre spectral sequence associated with a tower of fibrations. Because of the mentioned difficulties, in order to develop our algorithms and programs we use \emph{effective homology}~\cite{RS02,RS06}, a technique devised to compute the homology of complicated spaces, which allows to perform computations involving chain complexes of infinite type. 
A key concept for the technique of effective homology is that of a \emph{reduction} (Definition \ref{def:red}) between two chain complexes, which is a special type of chain equivalence. In short, if a chain complex is connected via a suitable sequence of reductions to a second one, the latter can be used in computations, for instance to determine homology. Differently from \cite{RS02,RS06}, we consider chain complexes endowed with generalized filtrations; by studying the interaction of the filtrations and the reductions we describe how the effective homology method can be applied to compute spectral systems and higher spectral sequences. In a previous work \cite{GR18} we presented general results and algorithms for successive reductions between filtered chain complexes, under the assumption that both the reductions and the filtrations (on all the involved chain complexes) are known, acknowledging that specific methods are required to compute many relevant spectral systems. In this work, we introduce a specific method for higher Leray--Serre spectral sequences, describing how the appropriate reductions and filtrations can be defined directly from the tower of fibrations. In order to make them computationally treatable, a main accomplishment of the present article is the introduction of a construction of higher Leray--Serre spectral sequences in a simplicial framework, which is more suitable for computational purposes than the original topological framework of \cite{matschke2013successive}. The study of this simplicial version of higher Leray--Serre spectral sequences yields in particular the guarantee of the correctness of our algorithms.
The programs described in this work have been implemented as a new module for the Computer Algebra system Kenzo~\cite{Kenzo} and are available at  \url{https://github.com/ana-romero/Kenzo-external-modules}. 

Let us outline the structure of the paper.
In Section \ref{sec:preliminaries} we present some fundamental notions and results that we use in the rest of the work. Section~\ref{sec:SSys_over_downsets} contains a general introduction to selected aspects of the theory of higher spectral sequences. In Section~\ref{sec:serre_spectral_systems} we describe our simplicial version of the higher Leray--Serre spectral sequence, introducing definitions of generalized filtrations (Definitions \ref{defn:2crpr-filtration} and \ref{defn:2tnpr-filtration}; see also Definitions \ref{defn:crpr-tnpr-filtration} and \ref{def:filtr_m_fibr}  in the following sections) of chain complexes associated with the tower of fibrations, a fundamental step to explicitly compute higher Leray--Serre spectral sequences. Sections \ref{sec:efhm_for_serrespectralsystems} and \ref{sec:m_fibrations} present the main ideas underlying our algorithms, as well as the description of how the effective homology technique allows them to deal with a wide range of situations that cannot be handled by standard algorithms.
The interaction between the generalized filtrations and the reductions of the effective homology technique (detailed in Propositions \ref{prop:rdct1-compatible}, \ref{prop:rdct2-compatible}, \ref{prop:maps_rho'1}, \ref{prop:maps_rho'2} and \ref{prop:rhoi_filtr}, \ref{prop:rho1_filtr}, \ref{prop:rho2_filtr}) yields our first main result (Theorems \ref{thm:2crpr-2page_S_effective} and \ref{thm:mcrpr-2page_S_effective}), which provides a theoretical guarantee of the correctness of our algorithms: from the $2$-page on (Definition \ref{def:2page_S_downsets}), the higher spectral sequence computed using our algorithms based on the effective homology method coincides with the higher Leray--Serre spectral sequence defined from the tower of fibrations. 
Section \ref{sec:second_page} contains the proof, in the simplicial setting we adopt throughout this work, of the second main result of this paper (Theorem \ref{thm:Serre-Matschke-simpl}), which identifies the $2$-page of higher Leray--Serre spectral sequences. In Section \ref{sec:examples_and_computations} we provide examples and computations which highlight some unique features of the programs we developed, and we end the paper with a section of conclusions and ideas for further work.

\section{Preliminaries}
\label{sec:preliminaries}

We devote this section to presenting preliminary notions and results that will be helpful to understand the rest of the work. After introducing the notion of spectral system, we recall some definitions on simplicial sets and principal fibrations, that we will need to introduce our simplicial version of the higher Leray--Serre spectral sequence. Then, we illustrate the effective homology technique and recall some results in homological perturbation theory that we will need to use effective homology in our context. We conclude the section presenting some algorithms to compute spectral systems in general situations, which will be useful to better understand the specific methods we developed for the higher Leray--Serre spectral sequence.

Throughout this work, we denote the composition of two maps $f:X\to Y$ and $g:Y\to Z$ by $g f$. If not specified otherwise, all the modules we consider are $\Z$-modules.

\subsection{Spectral systems}

Spectral systems are a generalization of the classical notion of spectral sequence~\cite{mccleary2001user} to the case of filtrations indexed over a poset. 

\begin{definition}
A partially ordered set or \emph{poset} $(I,\le)$ is a set $I$ endowed with a partial order $\le$.
\end{definition}

\begin{definition}
A \emph{chain complex} $C_\ast$ is a sequence of pairs $C_\ast=(C_n,d_n)_{n \in \Zset}$ where $C_n$ are abelian groups and $d_n:C_n \rightarrow C_{n-1}$ (the \emph{differential maps}) are group homomorphism of degree $-1$ such that $d_{n-1} d_n= 0 $, for all $ n\in \Zset$. The $n$-\emph{homology group} of the chain complex $C_\ast$ is defined as $H_n(C_\ast)\=\mathrm{Ker}\ d_n / \mathrm{Im}\ d_{n+1}$, for each $n\in \Zset$. We denote $H_*(C_\ast)\=\{H_n(C_\ast)\}_{n\in\Zset}$ the \emph{graded homology group} of the chain complex $C_\ast$. 
\end{definition}

In what follows, we will usually drop the subscript in the notation of differential maps. Homology groups can also be defined with coefficients in an arbitrary module $M$ rather than $\Z$, in which case we use the notation $H_n(C_*;M)$. For this definition and the relation with ordinary (integer) homology  see~\cite{maclane1963homology}.

\begin{definition}
\label{defn:i-filtration}
A filtration of a chain complex $C_\ast$ over a poset $(I,\le)$, briefly called an $I$-\emph{filtration}, is a collection of subcomplexes $F=\{F_i C_\ast \}_{i\in I}$ such that $F_i C_\ast \subseteq F_j C_\ast$ whenever $i\le j$ in $I$.
We call an \emph{$I$-filtered chain complex} $(C_* ,F)$ a chain complex $C_*$ endowed with an $I$-filtration $F=\{F_i C_*\}_{i\in I}$.
\end{definition}

We will often denote the chain subcomplexes $F_i C_\ast$ simply by $F_i$, forgetting about the grading of homology, when we are only interested in the filtration index $i$.

Now, we recall that for classical spectral sequences, which arise from a $\Zset$-filtration $\{F_p\}_{p\in \Zset}$, we have the formula (see~\cite{maclane1963homology}):
\begin{equation} \label{Epqr_classical}
E_{p,q}^r = \frac{Z_{p,q}^r + F_{p-1}C_{p+q}}{d(Z_{p+r-1, q-r+2}^{r-1}) + F_{p-1}C_{p+q}} ,
\end{equation}
where $Z_{p,q}^r \= \{ a\in F_p C_{p+q} : d(a)\in F_{p-r}C_{p+q-1} \}$. This expression can be rewritten in a form that highlights the interplay of the four filtration indices $p-r$, $p-1$, $p$ and $p+r-1$:
\begin{equation} \label{Epqr_classical2}
E_{p,q}^r = \frac{F_p \cap d^{-1}(F_{p-r})}{d(F_{p+r-1}) + F_{p-1}} ,
\end{equation}
where for simplicity we do not denote the grading of homology. To be precise, since in~(\ref{Epqr_classical2}) the denominator is not necessarily a subgroup of the numerator, the formula has to be interpreted keeping in mind that \emph{by convention} the notation $A/B$ for quotient groups will mean $A/(B\cap A) \cong (A+B)/B$. 

In~\cite{matschke2013successive}, this formula was imitated and 
generalized to the case of $I$-filtrations as follows.

\begin{definition}
\label{def:sp_sys_def1}
Let $(C_*,F)$ be an $I$-filtered chain complex.
Given a $4$-tuple of indices $z\le s\le p\le b$ in $I$ we define
\begin{equation}
\label{eq:S_term_def1}
S[z,s,p,b] \= \frac{F_p \cap d^{-1}(F_z)}{d(F_b)+F_s} .
\end{equation}
\end{definition}

As an example, in the particular cases $z=s\le p=b$ we can observe that $S[s,s,p,p]$ is the relative homology of $F_s \subseteq F_p$, that is $S[s,s,p,p]=H(F_p /F_s)$.

We call the collection $\{S[z,s,p,b]\}_{z\le s\le p\le b}$ the \emph{spectral system} associated with the $I$-filtration $F=\{ F_i\}_{i\in I}$ of $C_*$, and we call each abelian group $S[z,s,p,b]$ a \emph{term} of the spectral system.
For the sake of clarity, reintroducing in the notation (\ref{eq:S_term_def1}) the \emph{total degree} $n$ gives
\[ S_n[z,s,p,b] \= \frac{F_p C_n \cap d^{-1}(F_{z} C_{n-1})}{ d(F_b C_{n+1}) +F_s C_n}.
\]

The notion of differential in classical spectral sequences, as well as the way of obtaining terms of the page $r+1$ by taking homology at page $r$, can be generalized too.
Given two $4$-tuples of indices $z_1 \le s_1 \le p_1 \le b_1$ and $z_2 \le s_2 \le p_2 \le b_2$ in~$I$, it is easy to check that $d$ induces a well-defined differential
\[ d: S[z_2,s_2,p_2,b_2] \to S[z_1,s_1,p_1,b_1]
\]
whenever the additional inequalities $z_2 \le p_1$ and $s_2 \le b_1$ are satisfied. With a small abuse of notation, we denote with $d$ also the induced differentials. In the case of $z_2 = p_1$ and $s_2 = b_1$, a direct computation shows that the kernel and cokernel of $d: S[z_2,s_2,p_2,b_2] \to S[z_1,s_1,p_1,b_1]$ have the following nice expressions as terms of the spectral system:
\begin{align*}
     \kr d &=S[s_1,s_2,p_2,b_2] , \\
     \coker d &=S[z_1,s_1,p_1,p_2].
\end{align*}

The following result from \cite{matschke2013successive} describes how the homology of a sequence of such differentials can be expressed.

\begin{proposition}
\label{prop:homology_z1b3}
Consider three $4$-tuples of indices in $I$ satisfying the relations 
\begin{equation*}
\label{eq:inequalities_z1b3}
\begin{array}{ccccccccccccccc}
{} &{} &{} &{} &{} &{} &{} &{} &z_3 & \le &s_3 & \le &p_3  & \le &b_3 \\
{} &{} &{} &{} &{} &{} &{} &{} &\verteq &{} &\verteq  &{} &{} &{} &{} \\
{} &{} &{} &{} &z_2 & \le &s_2 & \le &p_2  & \le &b_2   &{} &{} &{} &{} \\
{} &{} &{} &{} &\verteq &{} &\verteq &{} &{} &{} &{} &{} &{} &{} &{} \\
z_1 & \le &s_1 & \le &p_1  & \le &b_1  &{} &{} &{} &{} &{} &{} &{} &{}
\end{array}
\end{equation*}
and the sequence of differentials between the corresponding terms:
\begin{equation}
\label{eq:ddSSS} 
S[z_3,s_3,p_3,b_3] \xrightarrow{d'} S[z_2,s_2,p_2,b_2] \xrightarrow{d} S[z_1,s_1,p_1,b_1] .
\end{equation}
Then the homology at the middle term is given by
\[  \frac{\kr d}{\im d'} =S[s_1,s_2,p_2,p_3] .
\]
\end{proposition}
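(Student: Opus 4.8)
The plan is to compute the homology $\kr d / \im d'$ directly from the defining formula~(\ref{eq:S_term_def1}), exploiting the two identities already recorded in the excerpt: that the kernel of a differential $d\colon S[z_2,s_2,p_2,b_2]\to S[z_1,s_1,p_1,b_1]$ with $z_2=p_1$, $s_2=b_1$ is $S[s_1,s_2,p_2,b_2]$, and that the cokernel of such a differential is $S[z_1,s_1,p_1,p_2]$. First I would observe that the hypotheses $z_2=p_1$, $s_2=b_1$ and $z_3=p_2$, $s_3=b_2$ are exactly what is needed to invoke these two formulas for the pairs $(d',d)$, so that $\kr d = S[s_1,s_2,p_2,b_2]$. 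It then remains to identify the image $\im d'$ inside this subquotient and to show that the quotient is $S[s_1,s_2,p_2,p_3]$.

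The core computation is a diagram chase at the level of cycles and boundaries. Writing everything in a fixed total degree $n$, a representative of $\kr d$ is an element $a\in F_{p_2}C_n$ with $d(a)\in F_{s_1}C_{n-1}$, taken modulo $d(F_{b_2}C_{n+1})+F_{s_2}C_n$ (using the convention $A/B \coloneqq (A+B)/B$). The map $d'$ sends a representative $a'\in F_{p_3}C_{n+1}$ with $d(a')\in F_{s_2}C_n$ to the class of $d(a')$. Hence $\im d'$ is precisely the image in $\kr d$ of the subgroup $d\bigl(F_{p_3}C_{n+1}\cap d^{-1}(F_{s_2}C_n)\bigr)$. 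The claim $\kr d/\im d' = S[s_1,s_2,p_2,p_3]$ then amounts to showing that adjoining this subgroup to the denominator $d(F_{b_2})+F_{s_2}$ of $S[s_1,s_2,p_2,b_2]$ produces exactly the denominator $d(F_{p_3})+F_{s_2}$ of $S[s_1,s_2,p_2,p_3]$ — that is, that
\[
d\bigl(F_{b_2}C_{n+1}\bigr) + F_{s_2}C_n + d\bigl(F_{p_3}C_{n+1}\cap d^{-1}(F_{s_2}C_n)\bigr)
\;=\; d\bigl(F_{p_3}C_{n+1}\bigr) + F_{s_2}C_n
\]
after intersecting with the numerator $F_{p_2}C_n\cap d^{-1}(F_{s_1}C_{n-1})$. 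The inclusion $b_2\le p_3$ (which is $s_3\le b_3$) gives one containment for free; for the reverse one uses that any $x\in F_{p_3}C_{n+1}$ with $d(x)$ landing in the relevant numerator already satisfies $d(x)\in F_{s_1}C_{n-1}\subseteq F_{s_2}C_{n-1}$, so $x$ itself lies in $F_{p_3}\cap d^{-1}(F_{s_2})$ up to something in $F_{s_2}C_{n+1}$, whose $d$-image is a boundary we can absorb.

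I expect the main obstacle to be the bookkeeping forced by the quotient convention $A/B \coloneqq (A+B)/B$: the numerator and denominator of each $S$-term are not a priori nested, so every manipulation must be checked after intersecting with the appropriate numerator, and one must be careful that the extra terms introduced (elements of $F_{s_2}$, boundaries from $F_{b_2}$ or $F_{p_3}$) genuinely lie where claimed. A clean way to organize this, and the route I would actually take, is to factor the argument through the two exact-sequence identities $\kr d = S[s_1,s_2,p_2,b_2]$ and $\coker d' = S[z_2,s_2,p_2,p_3] = S[p_1,s_2,p_2,p_3]$: the first identifies the ambient group, and applying the cokernel formula with the roles adapted shows that quotienting $S[s_1,s_2,p_2,b_2]$ by $\im d'$ replaces the index $b_2$ by $p_3$ and leaves the other three indices untouched, yielding $S[s_1,s_2,p_2,p_3]$. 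Verifying that these two quotient operations are compatible — i.e. that taking $\kr d$ first and then $\coker d'$ gives the same group as the stated one — is the one genuinely delicate point, and is handled by the degree-$n$ computation sketched above.
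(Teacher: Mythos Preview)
The paper does not give its own proof of this proposition; it is quoted from \cite{matschke2013successive}. Your overall strategy---compute $\kr d$ via the stated formula, then identify $\kr d/\im d'$ by showing the denominator enlarges to $d(F_{p_3})+F_{s_2}$---is correct and is the natural direct verification. The cleaner route you sketch at the end also works and is in fact tidier: take $\coker d' = S[z_2,s_2,p_2,p_3]$, observe that $d$ factors through it as a differential $\bar d\colon S[z_2,s_2,p_2,p_3]\to S[z_1,s_1,p_1,b_1]$ (the conditions $z_2=p_1$, $s_2=b_1$ are unchanged), and apply the kernel formula to $\bar d$ to get $\kr\bar d=S[s_1,s_2,p_2,p_3]$; since $\kr\bar d=\kr d/\im d'$ by construction, you are done without any delicate bookkeeping.

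There are, however, index slips in your direct computation that you should fix. A representative $a'$ of a class in $S[z_3,s_3,p_3,b_3]$ satisfies $d(a')\in F_{z_3}=F_{p_2}$, not $d(a')\in F_{s_2}$ as you write; correspondingly $\im d'$ is the image of $d\bigl(F_{p_3}\cap d^{-1}(F_{p_2})\bigr)$, not of $d\bigl(F_{p_3}\cap d^{-1}(F_{s_2})\bigr)$. Your reverse-containment step is also garbled: for $x\in F_{p_3}C_{n+1}$ the element $d(x)$ lives in degree $n$, not $n-1$, and ``$d(x)$ lands in the numerator $F_{p_2}\cap d^{-1}(F_{s_1})$'' only gives $d(x)\in F_{p_2}$ (the condition $d(d(x))\in F_{s_1}$ being vacuous), not $d(x)\in F_{s_1}$. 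The correct argument is simply: if $y=d(x)+w$ with $x\in F_{p_3}$, $w\in F_{s_2}$, and $y\in F_{p_2}$, then $d(x)=y-w\in F_{p_2}$ since $s_2\le p_2$, so $x\in F_{p_3}\cap d^{-1}(F_{p_2})$, which is exactly what is needed. With these corrections the direct computation goes through.
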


Note that, as the differential $d$ of $C_*$ is a graded map of degree $-1$, so are the induced differentials between terms of the spectral system (with respect to the total degree). Then, for example, making explicit the total degree in~(\ref{eq:ddSSS}) one obtains $$S_{n+1}[z_3,s_3,p_3,b_3] \xrightarrow{d'} S_{n}[z_2,s_2,p_2,b_2] \xrightarrow{d} S_{n-1}[z_1,s_1,p_1,b_1],$$ whose homology at the middle term is $\kr d / \im d' =S_n[s_1,s_2,p_2,p_3]$.

The paper~\cite{matschke2013successive} introduces some 
examples of spectral systems (and higher spectral sequences, see Section \ref{sec:SSys_over_downsets}) associated with interesting objects in algebraic topology. For example, as we will explain in Section~\ref{sec:serre_spectral_systems}, the higher Leray--Serre spectral sequence is defined by means of a tower of fibrations, and generalizes the classical Leray--Serre spectral sequence of a fibration. 

However, the definitions in~\cite{matschke2013successive} are formal and the paper does not include a method to compute the spectral systems.

\subsection{Simplicial sets and fibrations}
\label{sec:simplicial_sets_and_fibrations}

In this section we introduce the definition of fibrations in a simplicial setting, following \cite{May67}.

\begin{definition}
\label{defn:smob}
Let $\mathcal{D}$ be a category. The category $s\mathcal{D}$ of \emph{simplicial objects in} $\mathcal{D}$ is defined as follows.
An object $K \in s\mathcal{D}$ consists of 
\begin{itemize}
\item for each integer $n \geq 0$, an object $K_n \in \mathcal{D}$;
\item for every pair of integers $(i,n)$ such that $0 \leq i \leq n$, \emph{face} and \emph{degeneracy} maps $\partial_i: K_n \rightarrow K_{n-1}$ and $s_i: K_n \rightarrow K_{n+1}$ (which are morphisms in the category~$\mathcal{D}$) satisfying the \emph{simplicial identities}:

\begin{align*}
\partial_i \partial_j &= \partial_{j-1} \partial_i && \text{if } i < j \\
s_i s_j &= s_{j+1} s_i && \text{if } i \leq j \\
  & && \text{if } i < j \\
  \partial_i s_j &= 
    \smash{\left\{\begin{array}{@{}l@{}}
      s_{j-1} \partial_i \\[\jot]
      \mathrm{Id} \\[\jot]
      s_j \partial_{i-1}
    \end{array}\right.} && \text{if } i=j, j+1 \\
  & && \text{if } i > j+1
\end{align*}
\end{itemize}
Let $K$ and $L$ be simplicial objects. A \emph{simplicial map} (or \emph{simplicial morphism}) $f: K \rightarrow L$ consists of maps $f_n: K_n \rightarrow L_n$ (which are morphisms in $\mathcal{D}$) which commute with the face and degeneracy operators, that is 
$f_{n-1} \partial_i = \partial_i  f_n$ and $f_{n+1} s_i = s_i f_n$ for all $0 \leq i \leq n$.
\end{definition}

If the objects of $\mathcal{D}$ have \emph{elements}, the elements of $K_n$ are called the \emph{$n$-simplices} of $K$. 

\begin{definition}
\label{def:deg_nondeg}
An $n$-simplex $x\in K_n$ is \emph{degenerate} if $x=s_jy$ for some $y \in K_{n-1}$ and some $0 \leq j < n$; otherwise $x$ is called \emph{non-degenerate}. An element $x\in K_n$ has \emph{degeneracy degree} equal to $p$, denoted $\deg x =p$, if $x = s_{i_{n-p}} \cdots s_{i_1} y$ for some non-degenerate $y \in K_p$.
\end{definition}

A \emph{simplicial set} is a simplicial object in the category of sets.
In what follows, we will use the notion of $1$-\emph{reduced} simplicial set, that is a simplicial set having a unique $0$-simplex and no non-degenerate $1$-simplex.
A \emph{simplicial group} $G$ is a simplicial object in the category of groups; in other words, it is a simplicial set where each $G_n$ is a group and the face and degeneracy operators are group morphisms.

A simplicial set $K$ has a canonically associated chain complex $C_*(K)=(C_n(K),d_n)$, where each chain group $C_n(K)$ is defined as the free $\Z$-module generated by $K_n$, and the differential $d_n : C_n(K) \to C_{n-1}(K)$ is defined as the alternating sum of faces, $d_n \= \sum_{i=0}^n (-1)^i \partial_i$.

\begin{definition} 
\label{def:Cartesian_KL}
The \emph{Cartesian product} $X\times Y$ of two simplicial sets $X$ and $Y$ is the simplicial set whose set of $n$-simplices is $(X\times Y)_n \= X_n \times Y_n$, with coordinate-wise defined face and degeneracy maps: if $(x,y)\in (X\times Y)_n$, then  
\begin{align*}
\partial_i (x,y) &\= (\partial_i x , \partial_i y), \qquad 0\le i\le n ; \\
s_i (x,y) &\= (s_i x , s_i y), \qquad 0\le i\le n .
\end{align*}
\end{definition}

\begin{remark}
\label{rmk:degCartesian}
From the definition of the degeneracy maps of a Cartesian product $X\times Y$, it is clear that an element $(x,y)\in (X\times Y)_n$ can be expressed as $(x,y) = s_{i_{n-q}} \cdots s_{i_1} (x',y')$ for some non-degenerate $(x',y') \in (X\times Y)_q$, where $s_{i_{n-q}}, \ldots , s_{i_1}$ are the \emph{common} degenerations in the expressions of $x$ and $y$ with respect to non-degenerate elements, as in Definition \ref{def:deg_nondeg}.
\end{remark}

\begin{definition} 
\label{def:Twisted_GB}
A \emph{twisting operator} from a simplicial set $B$ to a simplicial group $G$ is a map $\tau : B\to G$ of degree $-1$, that is a collection of maps $\tau = \{ \tau_n : B_n \to G_{n-1} \}_{n\ge 1}$, satisfying the following identities, for any $n\ge 1$ and for any $b\in B_n$:  
\begin{align*}
\partial_i ( \tau b) &= \tau (\partial_i b), \qquad 0\le i<n-1 , \\
\partial_{n-1} ( \tau b) &= \tau (\partial_n b)^{-1} \cdot \tau (\partial_{n-1} b), \\
s_i (\tau b) &= \tau (s_i b), \qquad 0\le i\le n-1 , \\
e_n &= \tau (s_n b) ,
\end{align*}
where $e_n$ is the identity element of $G_n$.
\end{definition}

We defined twisting operators in a slightly different (yet equivalent) way from \cite{May67}, in order to agree with the definition implemented in the Kenzo system.

\begin{definition}
\label{def:twcrpr}
Given a simplicial group $G$, a simplicial set $B$ and a twisting operator $\tau : B\to G$, 
the \emph{twisted (Cartesian) product} $E(\tau)\= G \times_{\tau} B$ is the simplicial set whose set of $n$-simplices is $E(\tau )_n = (G\times_{\tau} B)_n \= G_n \times B_n$ and whose face and degeneracy maps are defined in the following way: if $(g,b)\in (G\times_{\tau} B)_n$, then  
\begin{align*}
\partial_i (g,b) &\= (\partial_i g , \partial_i b), \qquad 0\le i < n , \\
\partial_n (g,b) &\= (\tau (b) \cdot \partial_n g , \partial_n b) ; \\
s_i (g,b) &\= (s_i g , s_i b), \qquad 0\le i\le n .
\end{align*}
\end{definition}

It can be easily shown that the identities defining a twisting operator $\tau$ are equivalent to the simplicial identities of $G\times_{\tau} B$. Notice that Remark \ref{rmk:degCartesian} applies also to twisted products $G\times_{\tau} B$.

\begin{definition}
\label{def:princ_fibration}
Given a simplicial group $G$, a simplicial set $B$ and a twisting operator $\tau : B\to G$, we call \emph{(principal) fibration} a sequence $G\xhookrightarrow{} E\to B$, where $E\= E(\tau ) = G\times_{\tau} B$ and the maps are the inclusion as first factor $G \xhookrightarrow{} G\times_{\tau} B$ and the projection on the second factor $G\times_{\tau} B 
\to B$. In a fibration, $B$ is called the \emph{base}, $G$ is called the \emph{fiber} and $E$ is called the \emph{total space}.
\end{definition}

\subsection{Effective homology}
\label{sec:efhm}

The effective homology method, introduced in~\cite{Ser94} and explained in depth in~\cite{RS02} and~\cite{RS06}, is a technique which can be used to determine the homology of complicated spaces, in particular spaces which are not of finite type. 
In a previous work~\cite{GR18}, we used this technique to develop algorithms and programs for computing some spectral systems, as we will briefly illustrate in Section~\ref{sec:programs}. We present now the main definitions and ideas of this method.

\begin{definition}
\label{def:red}
A \emph{reduction} $\rho\=(C_\ast \rrdc D_\ast)$ between two
chain complexes $C_\ast$ and $D_\ast$ is a triple $(f,g,h)$ where: (a) The components
$f$ and $g$ are chain complex morphisms $f: C_\ast \rightarrow D_\ast$ and $g: D_\ast \rightarrow C_\ast$; (b)
The component $h$ is a homotopy operator $h:C_\ast\rightarrow C_{\ast+1}$ (a graded group homomorphism of degree +1); (c) The following relations must be satisfied:
  (1) $f  g = \mbox{id}_{D_\ast}$; (2)
  $g f + d_{C_\ast} h + h  d_{C_\ast}
        = \mbox{id}_{C_\ast}$;
  (3)~\ {$f  h = 0$;} (4) $h   g = 0$; (5) $h   h = 0$.
\end{definition}

The relations of Definition \ref{def:red} express the fact that $C_\ast$ is the direct sum of $D_\ast$ and a contractible (acyclic) complex. This decomposition is simply $C_\ast=\kr  f \oplus \im  g$, with $\im  g\cong D_\ast$ and $H_n(\kr  f )=0$, for all $n$. In particular, this implies that the homology groups $H_n(C_\ast)$ and $H_n(D_\ast)$ are canonically isomorphic, for all $n$.

Given a chain complex $C_*$, the \emph{trivial reduction} $\id = (f,g,h) :C_* \rrdc C_*$ is the reduction with $f=g=\id$ and $h=0$. We now state two simple results (see \cite[Ch.~5]{RS06}) describing the behavior of reductions with respect to composition and tensor product.
\begin{proposition}
\label{prop:red_comp}
Let $\rho=(f,g,h): C_* \rrdc D_*$ and $\rho'=(f',g',h'): D_* \rrdc E_*$ be two reductions. Then a reduction $\rho''=(f'',g'',h''): C_* \rrdc E_*$ is given by:
\[ f'' \= f' f , \qquad g'' \= g  g' , \qquad h'' \= h + g  h'  f .
\]
\end{proposition}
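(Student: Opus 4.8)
The plan is to verify directly that the triple $\rho'' = (f'', g'', h'')$ defined by $f'' = f'f$, $g'' = gg'$, $h'' = h + g\,h'\,f$ satisfies conditions (a), (b), (c) of Definition~\ref{def:red}. Condition (a) is immediate: $f'' = f'f$ is a composite of chain maps $C_* \to D_* \to E_*$, hence a chain map, and likewise $g'' = gg' : E_* \to D_* \to C_*$; for (b), $h'' = h + g\,h'\,f$ is a sum of degree $+1$ graded homomorphisms (note $h$ has degree $+1$ and $g\,h'\,f$ has degree $0 + 1 + 0 = +1$), hence of degree $+1$. So the content is the five algebraic relations in (c).

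First I would check (1): $f''g'' = f'f\,g\,g' = f'(fg)g' = f'\,\mathrm{id}_{D_*}\,g' = f'g' = \mathrm{id}_{E_*}$, using relation (1) for $\rho$ and then for $\rho'$. Next, relations (3), (4), (5), which I would dispatch before (2) since (2) is the one that genuinely uses the homotopy formula. For $f''h'' = f'f(h + g\,h'\,f) = f'(fh) + f'(fg)h'f = f'\cdot 0 + f'\,\mathrm{id}_{D_*}\,h'f = f'h'\,f = 0$, using (3) for $\rho$, (1) for $\rho$, and (3) for $\rho'$. Symmetrically, $h''g'' = (h + g\,h'\,f)g\,g' = (hg)g' + g\,h'(fg)g' = 0\cdot g' + g\,h'\,\mathrm{id}_{D_*}\,g' = g(h'g')= 0$, using (4) for $\rho$, (1) for $\rho$, and (4) for $\rho'$. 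For $h''h''$, expanding gives $hh + h\,g\,h'\,f + g\,h'\,f\,h + g\,h'\,f\,g\,h'\,f$; the first term vanishes by (5) for $\rho$, the second because $hg = 0$ by (4) for $\rho$, the third because $fh = 0$ by (3) for $\rho$, and the last because $fg = \mathrm{id}_{D_*}$ reduces it to $g\,h'\,h'\,f = g\,(h'h')\,f = 0$ by (5) for $\rho'$.

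The main step, and the one I expect to require the most care, is relation (2): $g''f'' + d_C h'' + h'' d_C = \mathrm{id}_{C_*}$. Writing $d = d_{C_*}$ and $d' = d_{D_*}$, and using that $f$ and $g$ are chain maps ($d'f = fd$, $gd' = dg$), I would expand
\[
g''f'' + d h'' + h'' d = g\,g'\,f'\,f + d h + h d + d\,g\,h'\,f + g\,h'\,f\,d .
\]
In the last two terms I substitute $d g = g d'$ and $f d = d' f$ to pull $d'$ to the inside, obtaining $g(d'h' + h'd')f$. By relation (2) for $\rho'$, $d'h' + h'd' = \mathrm{id}_{D_*} - g'f'$, so those two terms become $g(\mathrm{id}_{D_*} - g'f')f = gf - g\,g'\,f'\,f$. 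Adding everything, the term $g\,g'\,f'\,f$ cancels, leaving $gf + dh + hd$, which equals $\mathrm{id}_{C_*}$ by relation (2) for $\rho$. This completes the verification; the only subtlety is the bookkeeping of which chain-map identity ($d'f = fd$ or $dg = gd'$) is invoked where, so I would be explicit about that in the write-up.
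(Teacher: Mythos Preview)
Your verification is correct: each of the five relations in Definition~\ref{def:red} is checked by direct substitution, and the chain-map identities are invoked in the right places (notably in relation~(2), where you move the differentials past $g$ and $f$ to assemble $d'h' + h'd'$ on $D_*$). The paper itself does not prove this proposition; it merely states it as a standard fact with a reference to \cite[Ch.~5]{RS06}, so your write-up is the expected elementary argument and there is nothing to compare against.
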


\begin{proposition}
\label{prop:red_tensor}
Let $\rho=(f,g,h): C_* \rrdc D_*$ and $\rho'=(f',g',h'): C'_* \rrdc D'_*$ be two reductions. Then a reduction $\rho''\=\rho \otimes \rho' =(f'',g'',h''): C_* \otimes C'_* \rrdc D_* \otimes D'_*$ is given by:
\[ f'' \= f \otimes f' , \qquad g'' \= g \otimes g'  , \qquad h'' \= h \otimes \id_{C'_\ast} + (g  f) \otimes h'.
\]
\end{proposition}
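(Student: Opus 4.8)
The plan is to verify the five reduction relations of Definition~\ref{def:red} for the proposed triple $(f'', g'', h'')$ where $f'' = f \otimes f'$, $g'' = g \otimes g'$, and $h'' = h \otimes \id_{C'_\ast} + (g f) \otimes h'$, using the corresponding relations for $\rho = (f,g,h)$ and $\rho' = (f',g',h')$, the Leibniz rule for the tensor product differential $d_{C_\ast \otimes C'_\ast} = d_C \otimes \id + \id \otimes d_{C'}$ (with the appropriate Koszul sign), and the fact that morphisms of chain complexes commute with differentials.

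First I would check the easy relations. Relation~(1), $f'' g'' = \id_{D_\ast \otimes D'_\ast}$, follows immediately from $(f \otimes f')(g \otimes g') = (fg) \otimes (f'g') = \id \otimes \id$. For relations~(3), (4), (5), I would expand using bilinearity: for instance $f'' h'' = (f \otimes f')(h \otimes \id + (gf) \otimes h') = (fh) \otimes f' + (fgf) \otimes (f'h')$; the first summand vanishes since $fh = 0$, and for the second I would use $fg = \id$ and then $f'h' = 0$. Similarly $h'' g'' $ splits into $(hg) \otimes g' + (gfg) \otimes (h'g')$, which vanishes by $hg = 0$ and $h'g' = 0$; and $h'' h''$ expands into four terms, each killed by one of $hh = 0$, $fh = 0$, $h'h' = 0$, or $hg\,(\cdot) = 0$ after simplification with $fg = \id$.

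The main work — and the step I expect to be the real obstacle — is relation~(2), namely $g'' f'' + d\, h'' + h''\, d = \id_{C_\ast \otimes C'_\ast}$, because this is where the signs in the tensor-product differential and in the homotopy $h''$ interact. I would compute $d\, h'' + h''\, d$ by distributing $d = d_C \otimes \id \pm \id \otimes d_{C'}$ across both summands of $h''$, carefully tracking Koszul signs, and use the chain-map property $d_C(gf) = (gf)d_C$, $d_{C'} h' + h' d_{C'} = \id - g'f'$ (from relation~(2) for $\rho'$), and $d_C h + h d_C = \id - gf$ (from relation~(2) for $\rho$). The cross terms involving $(gf) \otimes (d_{C'} h')$ and $(gf)\otimes(h' d_{C'})$ should combine to give $(gf) \otimes (\id - g'f')$, while the terms $(d_C h + h d_C)\otimes \id$ give $(\id - gf)\otimes \id$; adding these yields $\id \otimes \id - (gf)\otimes(g'f') = \id - g''f''$, provided the signs cancel as intended. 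The delicate point is confirming that the sign convention used in defining $h''$ (with the factor $gf$ appearing in the second summand rather than, say, $\id$ tensored against $h'$) is precisely what makes the anticommuting cross-terms cancel; one typically needs $g f$ there (not $\id$) so that the extra term $h \otimes d_{C'}$-type contributions close up correctly. I would present this computation as a short chain of equalities, invoking the cited source~\cite[Ch.~5]{RS06} for the standard verification rather than reproducing every sign, and remark that relations~(3)--(5) guarantee $(f'', g'', h'')$ is not merely a homotopy equivalence but a genuine reduction.
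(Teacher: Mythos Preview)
Your verification is correct and is the standard direct check of the five reduction axioms. The paper itself does not prove this proposition: it merely states the result and refers to \cite[Ch.~5]{RS06}, so there is no ``paper's own proof'' to compare against beyond that citation; your approach is exactly the routine computation one finds in that reference.
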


\begin{definition}
A \emph{(strong chain) equivalence} $\varepsilon \= (C_\ast \lrrdc D_\ast)$ between two complexes $C_\ast$ and~$D_\ast$ is a triple $(\hat{C}_\ast,\rho,\rho ')$ where $\hat{C}_\ast$ is a chain complex and $\rho: \hat{C}_\ast \rrdc C_*$ and $\rho': \hat{C}_\ast \rrdc D_*$ are reductions: $C_\ast \lrdc \hat{C}_\ast \rrdc D_\ast.$
\end{definition}

\begin{definition}
An \emph{effective} chain complex $C_\ast$ is a free chain complex (that is, a chain complex consisting of free $\Zset$-modules) where each group $C_n$ is finitely generated, and there is an algorithm that returns a $\Zset$-base $\beta_n$ for each input degree $n$ (for details, see~\cite{RS02}). 
\end{definition}

Intuitively, an effective chain complex $C_\ast$ is a chain complex whose homology groups can be determined by means of standard algorithms for homology, based on matrix diagonalization and on the computation of the Smith Normal Form (see~\cite{KMM04}).

\begin{definition}
An \emph{object with effective homology} is a triple $(X,D_\ast,\varepsilon)$ where $X$ is an object (e.g. a simplicial set, a topological space) possessing a canonically associated free chain complex $C_\ast(X)$, $D_\ast$ is an effective chain complex and $\varepsilon = (C_\ast(X) \lrrdc D_\ast)$ is an equivalence between $C_\ast(X)$ and $D_\ast$.
\end{definition}

The notion of object with effective homology makes it possible to
compute the homology groups of \ql complicated\qr\ objects by using the associated effective complexes to perform the computations, which can be easily carried out via standard algorithms for homology. The method is based on the following idea: given some objects $X_1, \ldots, X_n$, a \emph{constructor} is an algorithm which produces a new object
$X$ (in Section~\ref{sec:pert_EZ} we will detail the case of the total space of a fibration as an example of a constructor). We assume that effective homology versions of the objects $X_1, \ldots , X_n$ are known, and we require that the constructor produces also an effective homology version of the space $X$. In this way, the effective chain complex $D_*$ associated with $X$ can be used for the computations, and the relationship with the original chain complex $C_*(X)$ is kept thanks to the equivalence $C_\ast(X) \lrrdc D_\ast$.   

The most significant achievement of the effective homology technique concerns the possibility to compute the homology of chain complexes of infinite type, which are not uncommon in algebraic topology and homological algebra, associated with interesting objects. 
We say that a chain complex $C_*$ is of \emph{infinite type} 
if at least one of its chain groups $C_n$ is not a finitely generated $\Z$-module. For brevity, we say that an object $X$ is of infinite type when the associated chain complex $C_*(X)$ is of infinite type. In this case, since one cannot save a list of all generators and express the differential maps as matrices, standard algorithms for homology are not directly applicable, and the effective homology technique is the only known way to deal with these situations.   

The effective homology method has been implemented in the system Kenzo~\cite{Kenzo}, a Common Lisp 16,000 lines program devoted to symbolic computation in algebraic topology, which has made it possible to determine homology and homotopy groups of complicated spaces and has proved its utility successfully computing some previously unknown results (for example, homology groups
of iterated loop spaces of a loop space modified by a cell attachment, components
of complex Postnikov towers; see~\cite{RS06} for details). 

\subsection{Homological perturbation and the (twisted) Eilenberg--Zilber reduction}
\label{sec:pert_EZ}

In the context of the present work, the homological perturbation results usually known by the names of Trivial Perturbation Lemma (TPL) and Basic Perturbation Lemma (BPL) turn out to be fundamental tools, since they describe how a perturbation (a modification of the differential of a chain complex) transmits through a reduction.

\begin{definition}
Let $C_*=(C_n,d_n)_{n \in \Zset}$ be a chain complex. A \emph{perturbation} $\delta$ of the differential $d$ is a family of morphisms $\delta=\{ \delta_n: C_n \to C_{n-1}\}_{n \in \Zset}$ such that the sum $d + \delta$ is again a differential, that is $(d + \delta)^2=0$ holds (meaning $(d_{n-1} + \delta_{n-1}) (d_n +\delta_n)= 0 $, for all $ n\in \Zset$).
\end{definition}

We call $C'_*=(C_n,d_n+\delta_n)_{n \in \Zset}$ the \emph{perturbed} chain complex obtained from~$C_*$ by introducing the perturbation $\delta$.

\begin{theorem} [Trivial Perturbation Lemma] 
\label{thm:tpl}
Let $C_*=(C_n,d_{C_n})_{n \in \Zset}$ and $D_*=(D_n,d_{D_n})_{n \in \Zset}$ be two chain complexes, $\rho=(f,g,h): C_* \rrdc D_*$ a reduction, and $\delta_D$ a perturbation of the differential
$d_D$. Then a reduction $\rho'=(f',g',h'): C'_* \rrdc D'_*$ exists, where:
\begin{enumerate}
\item$C'_* = (C_* , d_{C}+ g  \delta_D  f)$ is the perturbed chain complex obtained from $C_*$ by introducing the perturbation $g  \delta_D  f$;
\item $D'_* = (D_* , d_D +\delta_D )$ is the perturbed chain complex obtained from $D_*$ by introducing the perturbation $  \delta_D$; 
\item the maps of the new reduction $\rho'=(f',g',h')$ are given by $f'\= f$, $g'\= g$, $h'\= h$.
\end{enumerate}
\end{theorem}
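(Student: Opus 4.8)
The plan is to verify directly that the triple $\rho'=(f,g,h)$ satisfies the five reduction axioms of Definition~\ref{def:red} with respect to the perturbed differentials $d_{C}+g\delta_D f$ on $C_*$ and $d_D+\delta_D$ on $D_*$. Most of the work is bookkeeping; the only genuine input beyond the original reduction relations is the side-conditions $fh=0$, $hg=0$, $hh=0$ together with the unperturbed reduction identities $fg=\id_D$ and $gf+d_Ch+hd_C=\id_C$, plus the hypothesis $(d_D+\delta_D)^2=0$.

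First I would record the consequences of the side conditions that will be used repeatedly: from $fg=\id_D$ and $fh=0$ one gets, applying $f$ to the second reduction relation, that $fd_C = d_Df$ forces $f\delta$-type terms to collapse; more precisely the useful identities are $f\cdot(g\delta_D f)=\delta_D f$ (since $fg=\id$), $(g\delta_D f)\cdot g=g\delta_D$ (since $fg=\id$), $h\cdot(g\delta_D f)=0$ (since $hg=0$), and $(g\delta_D f)\cdot h=0$ (since $fh=0$). These four identities are what make the ``trivial'' case trivial. Next I would check axiom~(1): $f\,g=\id_D$ is unchanged since $f,g$ are unchanged. For axiom~(3),(4),(5): $fh=0$, $hg=0$, $hh=0$ are likewise unchanged. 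For axiom~(2) I would compute
\[
g f + (d_C+g\delta_D f)\,h + h\,(d_C+g\delta_D f) = gf + d_C h + h d_C + g\delta_D f h + h g \delta_D f = gf+d_Ch+hd_C = \id_C,
\]
using $fh=0$ and $hg=0$ to kill the two new terms. The remaining points are that $f$ and $g$ must still be chain maps for the new differentials, and that $d_C+g\delta_D f$ is genuinely a differential; the latter I would verify by expanding $(d_C+g\delta_D f)^2$ and using $d_C^2=0$, the identities above, and the relation $g(d_D+\delta_D)^2f=0$ transported through $\rho$, together with $gd_D=d_Cg$ and $fd_C=d_Df$ (which come from $f,g$ being chain maps in the unperturbed reduction). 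That $f$ intertwines $d_C+g\delta_D f$ with $d_D+\delta_D$ follows from $fd_C=d_Df$ and $f(g\delta_D f)=\delta_D f$; that $g$ intertwines them follows from $gd_D=d_Cg$ and $g\delta_D=(g\delta_D f)g$.

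The main obstacle, such as it is, is purely organizational: one must be careful that $g\delta_D f$ really is a perturbation (i.e. that $C'_*$ is a chain complex) before invoking the reduction axioms, and this is the one place where the hypothesis $(d_D+\delta_D)^2=0$ is actually needed — everything else uses only the algebraic identities of the reduction $\rho$ and never the fact that $\delta_D$ is small (indeed no convergence or nilpotence assumption appears, which is exactly why this is the \emph{trivial} perturbation lemma as opposed to the basic one). I expect the whole argument to be a short sequence of displayed identity manipulations with no conceptual difficulty.
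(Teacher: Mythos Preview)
Your proof plan is correct: the Trivial Perturbation Lemma is established by exactly the direct verification you outline, and the four ``collapsing'' identities $f(g\delta_D f)=\delta_D f$, $(g\delta_D f)g=g\delta_D$, $h(g\delta_D f)=0$, $(g\delta_D f)h=0$ are precisely what make the check mechanical. Note, however, that the paper does not actually prove this theorem --- it is stated in the preliminaries (Section~\ref{sec:pert_EZ}) as a standard tool from homological perturbation theory, alongside the Basic Perturbation Lemma, and is used without proof throughout; so there is no ``paper's own proof'' to compare against, and your argument simply supplies the (routine) omitted verification.
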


\begin{theorem}[Basic Perturbation Lemma,~\cite{Brown1967}] 
\label{thm:bpl}
Let $C_*=(C_n,d_{C_n})_{n \in \Zset}$ and $D_*=(D_n,d_{D_n})_{n \in \Zset}$ be two chain complexes, $\rho=(f,g,h): C_* \rrdc D_*$ a reduction, and $\delta_C$ a perturbation of the differential
$d_C$. Suppose that the composition $h  \delta_C$ satisfies the following \emph{nilpotency condition}: for every $x \in C_*$ there exists a non-negative integer $m=m(x) \in \Nset$ such that
$(h \delta_C)^m(x)=0$. Then a reduction
$\rho'=(f',g',h'): C'_* \rrdc D'_*$ exists, where:
\begin{enumerate}
\item $C'_*= (C_* , d_{C}+  \delta_C )$ is the perturbed chain complex obtained from $C_*$ by introducing the perturbation $  \delta_C$;
\item $D'_*= (D_* , d_{D}+  \delta_D )$ is the perturbed chain complex obtained from $D_*$ by introducing the perturbation $\delta_D \= f  {\delta_C}  \varphi  g =
               f  \psi  {\delta_C}  g$;
\item the maps of the new reduction $\rho'=(f',g',h')$ are given by
\begin{equation*}
f'  \= f  \psi , \qquad
g'  \= \varphi  g , \qquad
h'  \= \varphi  h = h  \psi ,
\end{equation*}
\end{enumerate}
with the operators $\varphi$ and $\psi$ given by
\begin{equation*}
\varphi  \=\sum_{i=0}^{\infty}{(-1)^i(h \delta_C)^i} , \qquad
\psi  \=\sum_{i=0}^{\infty}{(-1)^i(\delta_C  h)^i} ,
\end{equation*}
the convergence of these series being guaranteed by the nilpotency condition.
\end{theorem}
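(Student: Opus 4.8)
The statement to be proved is the Basic Perturbation Lemma (Theorem~\ref{thm:bpl}). Here is how I would organize the proof.

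\medskip

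\textbf{Proof plan.} First I would establish that the operators $\varphi \= \sum_{i\ge 0} (-1)^i (h\delta_C)^i$ and $\psi \= \sum_{i\ge 0} (-1)^i (\delta_C h)^i$ are well-defined graded group homomorphisms on $C_*$: for each fixed $x$ the nilpotency hypothesis says $(h\delta_C)^m(x) = 0$ for some $m = m(x)$, so the series defining $\varphi(x)$ is a finite sum; similarly $(\delta_C h)$ is locally nilpotent because $(\delta_C h)^{m+1} = \delta_C (h\delta_C)^m h$, so $\psi$ is well-defined. I would then record the basic algebraic identities these operators satisfy by manipulating the geometric-series-type expressions formally: namely $\varphi(\id + h\delta_C) = (\id + h\delta_C)\varphi = \id$ and $\psi(\id + \delta_C h) = (\id + \delta_C h)\psi = \id$, together with the \emph{intertwining relations} $h\psi = \varphi h$, $\delta_C \varphi = \psi \delta_C$, $f\psi = f$ would \emph{not} hold directly but rather one uses $f h = 0$ (reduction axiom (3)) to simplify; more precisely the key identities $\varphi h = h\psi$ and $\psi\delta_C h = \delta_C h \varphi$ (or the equivalent form $\varphi h \delta_C = h\delta_C\varphi$, etc.) follow by reindexing the series. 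These are purely formal consequences of the definitions and I would state them as a preliminary lemma.

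\medskip

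Next I would verify that $D'_* = (D_*, d_D + \delta_D)$ with $\delta_D \= f \delta_C \varphi g$ is genuinely a chain complex, i.e.\ $(d_D + \delta_D)^2 = 0$; and likewise confirm $C'_* = (C_*, d_C + \delta_C)$ is a chain complex (the latter is just the hypothesis that $\delta_C$ is a perturbation). The equality of the two stated expressions $f\delta_C\varphi g = f\psi\delta_C g$ for $\delta_D$ follows from the intertwining relation $\delta_C\varphi = \psi\delta_C$ noted above, after pre/post-composition with $f$ and $g$. Then the heart of the proof is to check the five reduction axioms of Definition~\ref{def:red} for $\rho' = (f', g', h')$ with $f' \= f\psi$, $g' \= \varphi g$, $h' \= \varphi h = h\psi$. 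This means verifying:
\begin{enumerate}
\item $f' g' = \id_{D'_*}$: compute $f\psi\varphi g$ and reduce using $\psi\varphi$-type identities together with $fg = \id$, $fh = 0$, $hg = 0$;
\item $g' f' + d_{C'} h' + h' d_{C'} = \id_{C'_*}$, where $d_{C'} = d_C + \delta_C$: this is the longest computation; one expands everything, groups the terms coming from the original reduction identity $gf + d_C h + h d_C = \id$, and shows the remaining terms involving $\delta_C$ telescope to zero using the series identities for $\varphi$ and $\psi$ and the relation $h\delta_C\varphi = \varphi - \id$ (equivalently $(\id + h\delta_C)\varphi = \id$);
\item $f' h' = 0$: equals $f\psi\varphi h = f\psi\varphi h$; reduce to $f h (\dots) = 0$ via $fh = 0$ after moving the series operators past $f$ using $f\psi \cdot(\text{stuff})$, or more cleanly use $f' h' = f\psi\cdot\varphi h$ and the identity $f\psi = f(\id + (\delta_C h)\psi)$, iterating to land on $fh = 0$; similarly
\item $h' g' = 0$ using $hg = 0$; and
\item $h' h' = 0$ using $hh = 0$, again after commuting the $\varphi,\psi$ factors appropriately.
\end{enumerate}
Finally, I would check that $\rho'$ really is a reduction \emph{from} $C'_*$ \emph{to} $D'_*$ in the sense that $f'$ and $g'$ are chain maps for the perturbed differentials, i.e.\ $f' d_{C'} = d_{D'} f'$ and $g' d_{D'} = d_{C'} g'$ — these come out of the same formal identities.

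\medskip

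\textbf{Main obstacle.} The routine-looking but genuinely delicate step is axiom (2), the homotopy identity $g'f' + d_{C'}h' + h'd_{C'} = \id$. The difficulty is bookkeeping: one must expand $\varphi$ and $\psi$ as infinite series, multiply out, and recognize that all the correction terms involving positive powers of $h\delta_C$ or $\delta_C h$ cancel in a telescoping fashion. The cleanest way to control this, which I would adopt, is to avoid manipulating the infinite series term-by-term and instead work with the \emph{closed-form} identities $\varphi = \id - h\delta_C\varphi = \id - \varphi h\delta_C$ and $\psi = \id - \delta_C h\psi = \id - \psi\delta_C h$ (valid because the series converge locally), treating $\varphi$ and $\psi$ as honest operators satisfying these fixed-point equations. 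Then axiom (2) becomes a finite algebraic manipulation using only these identities, the five original reduction axioms for $(f,g,h)$, and $(d_C+\delta_C)^2 = 0$. I would also remark (as is standard) that convergence/local nilpotency is what licenses these fixed-point identities, so the nilpotency hypothesis is used precisely there and nowhere else essential.
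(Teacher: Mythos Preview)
The paper does not actually prove Theorem~\ref{thm:bpl}: it is stated with a citation to~\cite{Brown1967} and used as a black box throughout, so there is no proof in the paper to compare against. Your plan is the standard verification of the Basic Perturbation Lemma and is correct; the organizing idea of replacing the infinite series by the fixed-point identities $\varphi = \id - h\delta_C\varphi$ and $\psi = \id - \psi\delta_C h$ is exactly the right way to keep the bookkeeping finite, and the auxiliary simplifications $\psi h = h$, $h\varphi = h$, $\psi g = g$, $f\varphi = f$ (each a one-line consequence of $hh=0$, $hg=0$, $fh=0$) make axioms (1), (3), (4), (5) immediate, leaving only axiom (2) and the chain-map checks as genuine computations.
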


Let us devote the rest of this section to effective homology of the twisted product of simplicial sets, which will be particularly relevant in what follows.
In order to state the definitions and the result correctly, we henceforth  assume all the chain complexes associated with simplicial sets to be \emph{normalized} (see \cite[Ch.~5]{May67}), which intuitively means that only non-degenerate simplices are considered as generators of the chain groups.
More precisely, for a simplicial set $X$ with the notation $C_*(X)$ we actually mean the quotient $C_*(X) / C_*^{\mathsf{D}}(X)$, where $C_*^{\mathsf{D}}(X)$ is the subcomplex of degenerate elements.
Let $G\rightarrow E \rightarrow B$ be a fibration given by a twisting operator $\tau: B\rightarrow G$;
we want to consider a constructor which produces the total space of the fibration, $E=G \times_\tau B$. Let us suppose that $B$ is $1$-reduced and that $G$ and $B$ are objects with effective homology, that is, there exist two equivalences $C_*(G) \lrrdc DG_*$ and $C_*(B) \lrrdc DB_*$, with $DG_*$ and $DB_*$ effective chain complexes. Then it is possible to obtain the effective homology of the total space $E$. 
The starting point is the Eilenberg--Zilber reduction, which describes the relation between the chain complex of a Cartesian product of simplicial sets $C_* (G\times B)$ and the tensor product $C_*(G)\otimes C_*(B)$.

\begin{theorem}[Eilenberg--Zilber,~\cite{Eilenberg1953a}]
\label{thm:EZ}
For any simplicial sets $G$ and $B$ there exists a reduction
\[ \rho = (f,g,h):  C_* (G\times B) \rrdc C_*(G)\otimes C_*(B) . 
\]
The maps $f,g,h$, which are called respectively the \emph{Alexander-Whitney}, \emph{Eilenberg--MacLane} and \emph{Shih maps}, are defined as follows:
\begin{align*}
&f(x_n ,y_n) \= \sum_{i=0}^n \partial_{i+1}\cdots \partial_n x_n \otimes \partial_0 \cdots \partial_{i-1} y_n ,   \\
&g(x_p \otimes y_q) \= \hspace{-10pt} \sum_{(\alpha, \beta)\in \{ (p,q)\text{-shuffles} \}} \hspace{-10pt} (-1)^{\sg (\alpha ,\beta )} (s_{\beta_q}\cdots s_{\beta_1}x_p , s_{\alpha_p} \cdots s_{\alpha_1} y_q)  , \\
&h(x_n, y_n) \= \hspace{-20pt} \sum_{\substack{0\le q \le n-1 , \\ 0\le p\le n-q-1 , \\ (\alpha , \beta) \in  \{ (p+1,q)\text{-shuffles} \}}} \hspace{-20pt} (-1)^{n-p-q+\sg (\alpha ,\beta )} 
\begin{aligned}[t]
(&s_\beta  \partial_{n-q+1}\cdots \partial_n x_n    , \\ &s_\alpha \partial_{n-p-q}\cdots \partial_{n-q-1} y_n) ,
\end{aligned}
\end{align*}
where a $(p,q)$-shuffle $(\alpha, \beta)=(\alpha_1 ,\ldots ,\alpha_p ,\beta_1 ,\ldots ,\beta_q)$ is defined as a permutation of the set $\{ 0,1,\ldots ,p+q-1 \}$ such that $\alpha_i <\alpha_{i+1}$ and $\beta_j <\beta_{j+1}$, $\sg (\alpha ,\beta)\=\sum_{i=1}^p (\alpha_i -i-1)$, $s_\beta \= s_{\beta_q +n-p-q}\cdots s_{\beta_1 +n-p-q} s_{n-p-q-1}$ and $s_\alpha \= s_{\alpha_{p+1} +n-p-q} \cdots s_{\alpha_{1} +n-p-q}$.
\end{theorem}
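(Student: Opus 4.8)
Since $f$, $g$ and $h$ are given by explicit formulas, the statement reduces to checking that $f$ and $g$ are chain morphisms and that the five relations of Definition~\ref{def:red} hold; I would carry these out in order of increasing difficulty. The fact that $f$ is a chain map is the classical Alexander--Whitney computation: the differential of $C_*(G\times B)$ is $\sum_i(-1)^i\partial_i$ with $\partial_i$ acting coordinatewise (Definition~\ref{def:Cartesian_KL}), and expanding $df$ and $fd$ and applying the simplicial identities, the cross terms cancel in pairs and one is left with the differential of $C_*(G)\otimes C_*(B)$. The fact that $g$ is a chain map is the classical shuffle computation, obtained by pairing each $(p,q)$-shuffle with the one produced by transposing two adjacent entries across the $\alpha/\beta$ boundary and matching the signs $\sg(\alpha,\beta)$. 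The relation $fg=\id$ follows by applying the Alexander--Whitney map to a single summand of $g(x_p\otimes y_q)$ and pushing the faces past the degeneracies: every summand becomes a degenerate tensor, hence vanishes in the normalized complex, except the one coming from the trivial shuffle, which gives back $x_p\otimes y_q$.

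The heart of the proof is the contraction relation $gf+dh+hd=\id$ together with $fh=0$, $hg=0$ and $hh=0$. One can first obtain a soft existence statement by acyclic models: $C_*(-\times-)$ is a free functor on pairs of simplicial sets which is acyclic on products of standard simplices (since $\Delta^p\times\Delta^q$ is contractible), and the natural endomorphisms $gf$ and $\id$ of $C_*(-\times-)$ agree in degree~$0$, so they are naturally chain homotopic by some $h_0$ with $gf+dh_0+h_0d=\id$. Such an $h_0$ can then be upgraded to satisfy the side conditions by the standard cleanup: replace $h_0$ by $(\id-gf)\,h_0\,(\id-gf)$---legitimate because $\id-gf$ is idempotent (as $fg=\id$) and commutes with $d$ (as $f$ and $g$ are chain maps)---and then replace the resulting homotopy $h'$ by $h'dh'$; a direct check shows this last homotopy also squares to zero. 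This yields \emph{a} reduction, but not the displayed formula for $h$. The concrete route, which is what the statement asserts, is to verify directly that the exhibited Shih homotopy $h$ works: one expands $dh+hd$ as an alternating sum of faces composed with the shuffle-indexed sum defining $h$ and shows, by a sign-and-shuffle cancellation, that it equals $\id-gf$, while $fh$, $hg$ and $hh$ vanish because the simplices produced are forced to be degenerate.

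The main obstacle is exactly this concrete verification of the contraction relation for $h$: one must keep simultaneous control of the face operators, the iterated degeneracies $\eta_\alpha,\eta_\beta$, and the shuffle signs so that the large alternating sum in $dh+hd$ telescopes correctly. I expect the cleanest organization to be an induction on the simplicial dimension $n$, extracting the extremal face $\partial_n$---where the non-uniform part of the Shih formula is concentrated---and matching the remaining terms against the inductive hypothesis, rather than a single global cancellation. In the write-up I would therefore either carry out this induction or, following the citation~\cite{Eilenberg1953a}, refer to Eilenberg--MacLane and Shih for the combinatorial details while supplying the acyclic-models argument for the existence of the reduction.
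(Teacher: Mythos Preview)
The paper does not prove this theorem: it is stated in the preliminaries as a classical result with a citation to~\cite{Eilenberg1953a}, and no proof is given. Your sketch is a reasonable outline of how one would actually establish the reduction---the acyclic-models existence argument followed by the cleanup to obtain the side conditions, or alternatively a direct inductive verification of the Shih formula---but there is nothing in the paper to compare it against. If anything, your final sentence (``refer to Eilenberg--MacLane and Shih for the combinatorial details'') is exactly what the paper does.
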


As we are interested in studying the twisted product $G\times_{\tau} B$ rather than the Cartesian product $G\times B$, we recall that the only difference between them concerns the face operators (see Definition~\ref{def:twcrpr}).  Consequently, the chain complexes $C_* (G\times B)$ and $C_* (G\times_{\tau} B)$ have the same underlying graded group, but different differentials. The differential of $C_* (G\times_{\tau} B)$ can be seen as a perturbed version of the differential of $C_* (G\times B)$, where the perturbation is given, for any $(g,b) \in G_n \times B_n$, by
\begin{equation} \label{eq:pert_twist} \delta (g,b) \= (-1)^n \left[ (\tau (b) \cdot \partial_n g , \partial_n b) - (\partial_n g , \partial_n b) \right] .
\end{equation}
A major classical result, known as the twisted Eilenberg--Zilber theorem, is obtained applying the Basic Perturbation Lemma to the Eilenberg--Zilber reduction $\rho = (f,g,h): C_* (G\times B) \rrdc C_*(G)\otimes C_*(B)$. 

\begin{theorem}[Twisted Eilenberg--Zilber,~\cite{Brown1959}]
\label{thm:twistedEZ}
Let $B$ be a simplicial set, $G$ a simplicial group and $\tau : B\to G$ a twisting operator. Then there exists a reduction
\[ \rho' = (f',g',h') : C_* (G\times_{\tau} B) \rrdc C_* (G) \otimes_t C_* (B) ,
\]
where $C_* (G) \otimes_t C_* (B)$ is the perturbed chain complex obtained from $C_* (G) \otimes C_* (B)$ by introducing the perturbation \ql induced\qr\ (via the application of the Basic Perturbation Lemma) by $\delta$.
\end{theorem}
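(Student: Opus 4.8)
The plan is to obtain the desired reduction as a direct application of the Basic Perturbation Lemma (Theorem~\ref{thm:bpl}) to the Eilenberg--Zilber reduction $\rho = (f,g,h) : C_*(G\times B) \rrdc C_*(G)\otimes C_*(B)$ of Theorem~\ref{thm:EZ}, taking as perturbation $\delta_C \= \delta$ the map on $C_*(G\times B)$ defined in~(\ref{eq:pert_twist}). First I would verify that $\delta$ is genuinely a perturbation of the differential of $C_*(G\times B)$, i.e. that $(d+\delta)^2 = 0$; this is immediate because $d+\delta$ is, by construction, the differential of $C_*(G\times_\tau B)$ (only the top face operator $\partial_n$ is modified, according to Definition~\ref{def:twcrpr}), and the latter is a differential since $G\times_\tau B$ is a simplicial set. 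With $\delta_C = \delta$ in hand, the conclusion of the BPL produces a reduction $\rho' = (f',g',h') : C_*(G\times_\tau B) \rrdc (C_*(G)\otimes C_*(B), d_\otimes + \delta_D)$ with $\delta_D = f\,\psi\,\delta\,g$, and one defines $C_*(G)\otimes_t C_*(B)$ to be precisely this perturbed complex; so the statement follows once the hypothesis of the BPL is checked.

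The main obstacle is therefore verifying the \emph{nilpotency condition}: for each $x \in C_*(G\times B)$ there must exist $m = m(x)$ with $(h\delta)^m(x) = 0$, where $h$ is the Shih map. I would argue this by a degree-lowering analysis. Fix $(g_n, b_n)$ a generator in degree $n$. Inspecting the formula for $\delta$ in~(\ref{eq:pert_twist}), both terms of $\delta(g_n,b_n)$ involve the face $\partial_n b_n$ in the base coordinate, so the base component of $\delta(g_n,b_n)$ lives in $B_{n-1}$. The key point, using the assumption that $B$ is $1$-reduced, is to track the degeneracy degree (in the sense of Definition~\ref{def:deg_nondeg}, applied via Remark~\ref{rmk:degCartesian} to the product) of the base coordinate: the Shih map $h$ applied to a simplex whose base part is a $q$-fold degeneracy of a non-degenerate simplex produces terms whose base part has again controlled degeneracy, and $\delta$ strictly decreases the dimension of the underlying non-degenerate base simplex. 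Since $B$ is $1$-reduced it has no non-degenerate simplices in dimensions $1$ (and exactly one in dimension $0$), so after finitely many alternations of $h$ and $\delta$ the underlying non-degenerate base simplex has dimension $0$, at which point $\delta$ vanishes (one checks $\tau$ evaluated on the relevant faces gives the identity, or the relevant faces coincide). This bounds $m(x)$ in terms of the dimension of $x$.

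Carrying out this bookkeeping carefully is the only substantive work; the rest is formal. Concretely, I would: (1) record that the generators of $C_*(G\times_\tau B)$ in degree $n$ may be taken, by Remark~\ref{rmk:degCartesian}, in the form $\eta_{i_{n-q}}\cdots\eta_{i_1}(g',b')$ with $(g',b')$ non-degenerate and $b'$ thus of base-dimension $\le q$; (2) show that $\delta$ sends such a generator to a combination of generators whose underlying non-degenerate base simplex has strictly smaller dimension (this is where $\partial_n$ and the $1$-reducedness enter); (3) show that $h$ does not increase this invariant (examining the Shih-map formula, the base coordinate of each summand is a face $\partial_{n-p-q}\cdots\partial_{n-q-1}$ of the original base coordinate, possibly degenerated, so its underlying non-degenerate part does not grow); (4) conclude that the invariant strictly decreases under each application of $h\delta$ and reaches $0$ in at most $\dim x$ steps, forcing $(h\delta)^{m}(x)=0$ for $m \ge \dim x$ (roughly). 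Then apply Theorem~\ref{thm:bpl} verbatim to obtain $\rho'$ and $\delta_D$, and set $C_*(G)\otimes_t C_*(B) \= (C_*(G)\otimes C_*(B),\, d_\otimes + \delta_D)$, which is exactly the statement. I would also remark that this is the classical argument of Brown~\cite{Brown1959} and Shih, and that the $1$-reducedness of $B$ is used precisely to guarantee nilpotency (it can be relaxed but not dropped).
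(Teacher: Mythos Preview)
Your overall strategy is exactly the one the paper indicates: the twisted Eilenberg--Zilber reduction is obtained by applying the Basic Perturbation Lemma to the Eilenberg--Zilber reduction of Theorem~\ref{thm:EZ}, with $\delta_C = \delta$ the perturbation~(\ref{eq:pert_twist}); the paper does not spell out more than this, so your level of detail is already greater than the paper's.

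There is, however, a genuine error in your nilpotency argument. You invoke $1$-reducedness of $B$ and even assert that it ``can be relaxed but not dropped''. This is false: Theorem~\ref{thm:twistedEZ} as stated carries no hypothesis on $B$, and none is needed. The correct invariant is simply the degeneracy degree $\deg b$ of the base coordinate (Definition~\ref{def:deg_nondeg}). One checks that $\delta$ strictly lowers it: writing $b = \eta_{j_{n-p}}\cdots\eta_{j_1} b'$ in standard form with $b'$ non-degenerate of dimension $p$, either $j_{n-p} = n-1$, in which case $\tau(b) = \tau(\eta_{n-1}(\cdots)) = e_{n-1}$ by the last twisting axiom and hence $\delta(g,b) = 0$; or $j_{n-p} \le n-2$, in which case the simplicial identities give $\partial_n b = \eta_{j_{n-p}}\cdots\eta_{j_1}\partial_p b'$, whose underlying non-degenerate simplex has dimension at most $p-1$. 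This is precisely the first assertion of Proposition~\ref{prop:perturb_filtr}, which requires no reducedness. On the other hand, inspecting the Shih formula shows that in each summand the base coordinate is obtained from $y_n$ by first applying face maps and then degeneracies, so its degeneracy degree does not exceed $\deg y_n$. Thus $h\delta$ strictly decreases $\deg b \ge 0$, and for $x \in C_n(G\times B)$ one has $(h\delta)^{n+1}(x) = 0$.

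The $1$-reducedness hypothesis enters the paper only later and for a different purpose: it ensures that the \emph{induced} perturbation $\delta'$ on the tensor-product side lowers the base dimension by at least $2$ (second part of Proposition~\ref{prop:perturb_filtr}), which is what makes the subsequent filtration arguments in Sections~\ref{sec:serre_spectral_systems}--\ref{sec:m_fibrations} work. You have conflated that use with the nilpotency needed here.
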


Now, from the effective homologies of $G$ and $B$, we can construct a new equivalence from the tensor product $C_*(G)\otimes C_*(B)$ to $DG_*\otimes DB_*$ (see Proposition \ref{prop:red_tensor}), and using again the TPL and BPL (with the perturbation to be applied to the differential of $C_*(G)\otimes C_*(B)$ to obtain the differential of $C_*(G)\otimes_t C_*(B)$) we construct an equivalence from $C_*(G)\otimes_t C_*(B)$ to a new twisted tensor product $DG_*\otimes_t DB_*$, which is an effective chain complex (see~\cite{RRS06,RS06} for details). Finally, the composition of the two equivalences is the effective homology of $E \= G \times_\tau B$. In Section \ref{sec:efhm_for_serrespectralsystems} we will use and generalize this construction.

\subsection{Programs to compute spectral systems}
\label{sec:programs}

In a previous work~\cite{GR18}, we developed a set of programs for computing spectral systems based on the effective homology technique and implemented in the Kenzo system. The programs work 
in a similar way to the method that Kenzo uses to determine homology groups of a given chain complex: 
if an $I$-filtered chain complex $C_\ast$ is of finite type, its spectral system can be determined
by means of diagonalization algorithms on some matrices. Otherwise, the effective homology $C_\ast \lrdc \hat{C}_\ast \rrdc D_\ast$ of the initial chain complex $C_\ast$ is used to determine the spectral system as follows.

Let $C_\ast \rrdc D_\ast$ be a reduction. If the chain complexes $C_\ast$ and $D_\ast$ are endowed with $I$-filtrations $F$ and $F'$ respectively, in~\cite{GR18} we proved that, under suitable assumptions on the reduction $\rho$, (some terms of) the spectral systems of $(C_\ast,F)$ and $(D_\ast,F')$, denoted with the letters $S$ and $S'$ respectively, are isomorphic. This allows to compute the 
spectral system of the chain complex $C_\ast$ by using, to perform the computations, the chain complex~$D_\ast$, which in our scenario is assumed to be of finite type. More concretely, the following results express the conditions that are necessary to ensure that the spectral systems of the $I$-filtered chain complexes $C_\ast$ and $D_\ast$ are isomorphic.

\begin{theorem} 
\label{thm_Ana_orderred_gen}
Let $\rho = (f,g,h): C_* \rrdc D_*$ be a reduction between the $I$-filtered chain complexes $(C_* , F)$ and $(D_* ,F')$, and suppose that $f$ and $g$ are compatible with the filtrations, that is, for all indices $i \in I$ one has $f(F_i) \subseteq F'_i$ and $g(F'_i) \subseteq F_i$. 
Then, given a $4$-tuple of indices $z\le s\le p\le b$ in $I$,  the map $f$ induces an isomorphism between the spectral system terms
\[ f^{z,s,p,b}:  S[z,s,p,b]\to S'[z,s,p,b]  
\]
whenever the homotopy $h: C_\ast \to C_{\ast +1}$ satisfies 
the conditions  
\begin{equation} \label{eq.incl_gen_sameF}
h(F_z) \subseteq F_s  \qquad \text{ and } \qquad
h(F_p)  \subseteq F_b   . 
\end{equation} 
\end{theorem}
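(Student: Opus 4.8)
The plan is to show directly that the chain-level map $f$ descends to a well-defined map on the quotients defining $S[z,s,p,b]$ and $S'[z,s,p,b]$, and then construct an inverse using $g$ together with the homotopy $h$. Recall $S[z,s,p,b] = (F_p \cap d^{-1}(F_z)) / (d(F_b) + F_s)$, with the quotient convention $A/B = A/(B\cap A)$. First I would check that $f$ sends the numerator into the numerator: if $x \in F_p C_n$ with $d(x) \in F_z C_{n-1}$, then since $f$ is a chain map and filtration-compatible, $f(x) \in F'_p C_n$ and $d(f(x)) = f(d(x)) \in F'_z C_{n-1}$. Likewise $f$ sends $d(F_b) + F_s$ into $d(F'_b) + F'_s$, again by the chain-map property and compatibility of $f$. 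Hence $f^{z,s,p,b} \colon S[z,s,p,b] \to S'[z,s,p,b]$ is well-defined. Surjectivity is immediate from $f g = \mathrm{id}_{D_\ast}$: every class in $S'[z,s,p,b]$ has a representative $y \in F'_p \cap d^{-1}(F'_z)$, and $g(y)$ lies in the numerator of $S[z,s,p,b]$ (using $g(F'_p) \subseteq F_p$ and $d(g(y)) = g(d(y)) \in g(F'_z) \subseteq F_z$), with $f(g(y)) = y$.

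The substantive point is injectivity, and this is where the hypotheses on $h$ are used. Suppose $x \in F_p C_n \cap d^{-1}(F_z C_{n-1})$ represents a class with $f(x) \in d(F'_b C_{n+1}) + F'_s C_n$, say $f(x) = d(y) + w$ with $y \in F'_b C_{n+1}$ and $w \in F'_s C_n$. I would apply the reduction identity $g f + d h + h d = \mathrm{id}_{C_\ast}$ to $x$, obtaining
\[
x = g(f(x)) + d(h(x)) + h(d(x)) = g(d(y) + w) + d(h(x)) + h(d(x)).
\]
Now $g(d(y)) = d(g(y))$ with $g(y) \in g(F'_b) \subseteq F_b$, so this term is in $d(F_b C_{n+1})$; and $g(w) \in g(F'_s) \subseteq F_s$. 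For $d(h(x))$: since $x \in F_p$ and $h(F_p) \subseteq F_b$, we get $h(x) \in F_b C_{n+1}$, so $d(h(x)) \in d(F_b C_{n+1})$. For $h(d(x))$: since $d(x) \in F_z C_{n-1}$ and $h(F_z) \subseteq F_s$, we get $h(d(x)) \in F_s C_n$. Collecting terms, $x \in d(F_b C_{n+1}) + F_s C_n$, i.e. $x$ represents the zero class in $S[z,s,p,b]$. This establishes injectivity, and combined with surjectivity, $f^{z,s,p,b}$ is an isomorphism.

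The only genuine obstacle is bookkeeping: one must be careful that every term produced by expanding the reduction identity lands in $d(F_b) + F_s$, and this is precisely why \emph{both} conditions in~(\ref{eq.incl_gen_sameF}) are needed — $h(F_p) \subseteq F_b$ controls $d(h(x))$ and $h(F_z) \subseteq F_s$ controls $h(d(x))$. One should also note that all maps involved ($f$, $g$, $d$, $h$) are degree-homogeneous (up to the known degree shifts), so the argument respects the total grading and applies degreewise; no additional argument is needed there. A final remark: the quotient convention makes the "well-defined" checks slightly delicate in that one only needs the numerator mapped to the numerator and the denominator mapped to the denominator, since $A/(B\cap A)$ then receives an induced map automatically — I would state this once at the outset to avoid repeating it.
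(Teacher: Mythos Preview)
The paper does not include its own proof of this theorem; it is quoted from the authors' earlier work~\cite{GR18}. Your argument is correct and is the expected one: you verify that $f$ carries numerator to numerator and denominator to denominator (using that $f$ is a filtered chain map), obtain surjectivity from $fg=\mathrm{id}$ together with the filtration-compatibility of $g$, and for injectivity expand $x = gf(x) + dh(x) + hd(x)$ and place each summand in $d(F_b)+F_s$ using exactly the two hypotheses $h(F_p)\subseteq F_b$ and $h(F_z)\subseteq F_s$. This is precisely the natural proof and there is nothing to add.
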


\begin{corollary}
\label{coro:thm_Ana_h_comp}
Let $\rho = (f,g,h): C_* \rrdc D_*$ be a reduction between the $I$-filtered chain complexes $(C_* , F)$ and $(D_* ,F')$, and suppose that  the maps $f,g,h$ are compatible with the filtrations. 
Then the map $f$ induces isomorphisms
\[ f^{z,s,p,b}:  S[z,s,p,b]\to S'[z,s,p,b]  
\]
for any 4-tuple of indices $z\le s\le p\le b$ in $I$.
\end{corollary}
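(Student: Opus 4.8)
The plan is to derive Corollary~\ref{coro:thm_Ana_h_comp} as an immediate consequence of Theorem~\ref{thm_Ana_orderred_gen}. First I would observe that the hypothesis ``$f,g,h$ are compatible with the filtrations'' means, spelled out, that $f(F_i)\subseteq F'_i$, $g(F'_i)\subseteq F_i$, and $h(F_i)\subseteq F_i$ for \emph{every} index $i\in I$. The first two conditions are exactly the hypotheses on $f$ and $g$ required by Theorem~\ref{thm_Ana_orderred_gen}, so it only remains to check the two conditions~(\ref{eq.incl_gen_sameF}) on the homotopy for a given $4$-tuple $z\le s\le p\le b$.

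Next I would verify those two conditions. Applying the filtration-compatibility of $h$ at the index $z$ gives $h(F_z)\subseteq F_z$, and since $z\le s$ the filtration is increasing, so $F_z\subseteq F_s$; composing the inclusions yields $h(F_z)\subseteq F_s$. The same argument at the index $p$ gives $h(F_p)\subseteq F_p\subseteq F_b$, using $p\le b$. Thus both conditions in~(\ref{eq.incl_gen_sameF}) hold automatically for every admissible $4$-tuple.

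Finally, I would invoke Theorem~\ref{thm_Ana_orderred_gen} for each $4$-tuple $z\le s\le p\le b$: all its hypotheses are satisfied, so it produces the isomorphism $f^{z,s,p,b}\colon S[z,s,p,b]\to S'[z,s,p,b]$. Since the $4$-tuple was arbitrary, this establishes the corollary for \emph{every} term of the spectral systems simultaneously.

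There is essentially no obstacle here: the corollary is a direct specialization of the theorem, the only content being the trivial monotonicity observations $F_z\subseteq F_s$ and $F_p\subseteq F_b$ that upgrade the single-index compatibility of $h$ to the paired conditions~(\ref{eq.incl_gen_sameF}). The proof is a one-line deduction and the main ``work'' has already been done in Theorem~\ref{thm_Ana_orderred_gen}.
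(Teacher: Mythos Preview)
Your proposal is correct and matches the paper's approach: the corollary is stated without proof immediately after Theorem~\ref{thm_Ana_orderred_gen}, as a direct specialization, and your argument spelling out that $h(F_z)\subseteq F_z\subseteq F_s$ and $h(F_p)\subseteq F_p\subseteq F_b$ is exactly the trivial deduction the authors intend.
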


Making use of these results, in~\cite{GR18} we developed the following algorithms, implemented in the Kenzo system. 

\begin{algorithm} 
\label{alg:neff-spsq-gr} Computation of the terms of a spectral system. \hspace{2pt} \newline
\noindent \emph{Input:}
\begin{itemize}
\item a chain complex $C_\ast=(C_n,d_n)$ with effective homology $C_\ast \stackrel{\rho_1}{\lrdc} \hat{C}_\ast  \stackrel{\rho_2}{\rrdc} D_\ast$,
\item $I$-filtrations for $C_\ast$, $\hat{C}_\ast$ and $D_\ast$ such that all the maps of the reductions $\rho_1$ and $\rho_2$ are compatible with the filtrations,
\item elements  $z \le s \le p \le b$ in $I$.
\end{itemize}
\emph{Output:} a basis-divisors representation of the group $S[z,s,p,b]$ of the spectral system associated with the filtered chain complex $C_\ast$, that is to say, a list of combinations $(c_1, \ldots , c_{k+\beta})$ which generate the group, together with the list of non-negative integers $(a_1,\ldots,a_k,0,\stackrel{\beta}{\ldots},0)$, where $a_1,\ldots ,a_k$ are the torsion coefficients of $S[z,s,p,b]$ and $\beta$ is its rank. The list of \emph{divisors} can be seen as the list of the coefficients of the elements that appear in the denominator with regard to the list of combinations that generate the group.
\end{algorithm}

Notice that we have stated this algorithm under the hypotheses of Corollary~\ref{coro:thm_Ana_h_comp}, namely that all the maps of the reductions $\rho_1$ and $\rho_2$, including the homotopies $h_1$ and $h_2$, are compatible with the filtrations. Algorithm~\ref{alg:neff-spsq-gr} can be applied also if $h_1$ and $h_2$ are not compatible with the filtrations, but the correctness of the output is guaranteed only for the terms $S[z,s,p,b]$ satisfying the hypotheses of
Theorem~\ref{thm_Ana_orderred_gen}: $h_i(F_z)\subseteq F_s$ and $h_i(F_p)\subseteq F_b$ for both homotopies ($i=1,2$). The same remark applies to the following algorithm.

\begin{algorithm} 
\label{alg:neff-spsq-dffr} Computation of the differentials of a spectral system. \hspace{2pt} \newline
\noindent \emph{Input:}
\begin{itemize}
\item a chain complex $C_\ast=(C_n,d_n)$ with effective homology $C_\ast \stackrel{\rho_1}{\lrdc} \hat{C}_\ast  \stackrel{\rho_2}{\rrdc} D_\ast$,
\item $I$-filtrations for $C_\ast$, $\hat{C}_\ast$ and $D_\ast$ such that all the maps of the reductions $\rho_1$ and $\rho_2$ are compatible with the filtrations,
\item elements $z_1\le s_1\le p_1\le b_1$ and $z_2\le s_2\le p_2\le b_2$ in $I$ such that $z_2 \le p_1$ and $s_2 \le b_1$,
\item a class $a \in S[z_2,s_2,p_2,b_2]$, given by means of the coefficients $(\lambda_1,\ldots,\lambda_t)$ with respect to the basis $(c_1,\ldots,c_t)$ of the group $S[z_2,s_2,p_2,b_2]$ determined by Algorithm~\ref{alg:neff-spsq-gr}.
\end{itemize}
\emph{Output:}
the coefficients of the class $d(a)\in S[z_1,s_1,p_1,b_1]$ with respect to the basis $(c'_1,\ldots,c'_{t'})$ computed by means of Algorithm~\ref{alg:neff-spsq-gr}.
\end{algorithm}

To improve the efficiency of Algorithms~\ref{alg:neff-spsq-gr} and~\ref{alg:neff-spsq-dffr}, discrete vector fields~\cite{For98} can also be used (see~\cite{GR18} for details, where programs computing discrete vector fields which are compatible with generalized filtrations on chain complexes are presented).

The implementation of these algorithms required the definition of a new class called \texttt{GENERALIZED-FILTERED-CHAIN-COMPLEX} which inherits from the Kenzo class \texttt{CHAIN-COMPLEX} and includes two new slots:\footnote{Several Lisp technical components have been omitted.} 

{ \footnotesize \begin{verbatim}
(DEFCLASS GENERALIZED-FILTERED-CHAIN-COMPLEX (chain-complex)
  ((pos :type partially-ordered-set )
   (gen-flin :type (function (generator) 
                 list-of-filtration-indexes))))
\end{verbatim}
}
\normalsize

The first slot, \texttt{pos}, is the poset over which the generalized filtration is defined. The second slot, \texttt{gen-flin},  is a function which inputs a generator of the chain complex and returns a list of elements of \texttt{pos}. This list represents the \emph{generalized filtration index} of the element, defined as follows.

\begin{definition}
\label{defn:gen-filtration-index}
Given a generator $\sigma \in C_\ast$, we define the \emph{generalized filtration index} of $\sigma$, denoted $\gf(\sigma)$, as the set of all indices $i\in I$ such that $\sigma \in F_{i} - \bigcup_{t < i} F_t$.
\end{definition}

Given now $p \in I$, the group $F_p$ corresponds to the free module generated by the set of generators $\sigma$ of $ C_\ast$ such that   there exists $i \in \gf(\sigma)$ with $i \leq p$. This implementation of generalized filtered chain complexes by means of the generalized filtration index notion is also valid for chain complexes of infinite type. We refer the reader to~\cite{GR18} for further details on these programs and some examples of computations.

\section{Higher spectral sequences}
\label{sec:SSys_over_downsets}

In this section we study some ideas from \cite[\S 3]{matschke2013successive} which are relevant for our work. In particular, we focus our attention on the two types of posets  playing a prominent role in the context of spectral systems, namely $\Z^m$ and the poset of its downsets $D(\Z^m)$. For spectral systems over $D(\Z^m)$ we illustrate the notion of \emph{connection}, that is a way to relate quotients defined from the filtered chain complex $C_*$ to the homology $H_*(C_*)$ through a series of homology computations, isomorphisms and groups extensions.  

First of all, let us consider $\Z^m$  as the poset $(\Z^m ,\le)$ with the coordinate-wise order relation $\le$, defined as follows: $P=(p_1,\ldots ,p_m)\le Q=(q_1,\ldots ,q_m)$ if and only if $p_i\le q_i$, for all $1\le i\le m$.  
\begin{definition}
\label{def:downset_DZm}
A \emph{downset} of $\Z^m$ is a subset $p\subseteq \Z^m$ such that if $P\in p$ and $Q\le P$ in $\Z^m$ then $Q\in p$. We denote $D(\Z^m)$ the collection of all downsets of $\Z^m$, which is a poset with respect to the inclusion $\subseteq$. 
\end{definition}
\begin{figure}[ht]
\centering
\includegraphics[scale=0.7]{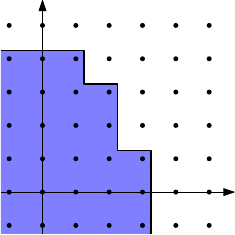}
\caption{The set of points of $\Z^2$ in the colored region is an example of a downset of $\Z^2$.}
\label{fig:ex_downset}
\end{figure}
%
Both $\Z^m$-filtrations and $D(\Z^m)$-filtrations arise in quite common situations. For example, when a chain complex $C_*$ is $\Z$-filtered in $m$ different ways, a $\Z^m$-filtration $\{ F_P\}_{P\in \Z^m}$ of $C_*$ can be easily defined (see \cite{matschke2013successive} for details). Canonically associated with a $\Z^m$-filtration $\{ F_P\}_{P\in \Z^m}$ there is a $D(\Z^m)$-filtration $\{F_p\}_{p\in D(\Z^m)}$ defined by setting, for each $p\in D(\Z^m)$,
\[ F_p \= \sum_{P\in p} F_P . 
\]

Spectral systems associated with $D(\Z^m)$-filtrations are the subject of the present section. 
\begin{figure}[ht]
\centering
\includegraphics[scale=0.7]{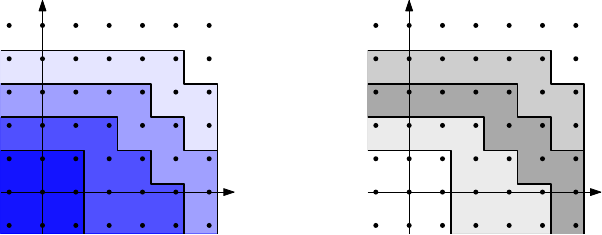}
\caption{Graphical representations of a term $S[z,s,p,b]$ of a spectral system over $D(\Z^2)$. On the left, the four downsets $z\le s\le p\le b$ in $D(\Z^2)$ defining the term $S[z,s,p,b]$ are colored in decreasing shades. On the right, the same term is represented following a convention we will adopt in what follows: the set difference $s\setminus z$ is colored in light gray \textcolor{light-gray}{$\blacksquare$}, $p\setminus s$ in dark gray \textcolor{dark-gray}{$\blacksquare$} and $b\setminus p$ in medium gray \textcolor{med-gray}{$\blacksquare$}. All the figures of this section are inspired by those of~\cite{matschke2013successive}.}
\label{fig:two_downsets}
\end{figure}
We call them \emph{higher spectral sequences}, adopting the terminology introduced in \cite{Matschke2014a}.
Although we have just illustrated a notable situation in which $D(\Z^m)$-filtrations can be defined, the results we will state apply to any $D(\Z^m)$-filtration $\{F_p\}_{p\in D(\Z^m)}$, with the additional hypothesis that it is \emph{distributive}: $F_{p\cap q}=F_p \cap F_q$ and $F_{p\cup q}=F_p + F_q$, for all $p,q \in D(\Z^m)$.
For instance, a $D(\Z^m)$-filtration is distributive if $C_*$ admits a decomposition $\bigoplus_{P\in \Z^m} C_P$ as graded abelian group and $F_p \= \bigoplus_{P\in p}C_P$ are chain subcomplexes, for all $p \in D(\Z^m)$. 

\begin{definition}
\label{def:connection}
Given a distributive $D(\Z^m)$-filtration of $C_*$, we call \emph{connection} any procedure which, starting with the $1$-page of the higher spectral sequence $\{S[z,s,p,b]\}$ over $D(\Z^m)$, determines $H_*(C_*)$ via a succession of homology computations, group extensions and natural isomorphisms between terms. In this context, we call \emph{$1$-page} of the higher spectral sequence $\{S[z,s,p,b]\}$ the set of terms $S_n[s,s,p,p]=H_n(F_p /F_s)$ such that $p$ \emph{covers} $s$, that is $s<p$ and there exists no $x\in D(\Z^m)$ with $s<x<p$.  With a small abuse of terminology, we will call \emph{$1$-page}  each subset of $\{S[s,s,p,p] \mid s,p\in D(\Z^m), \text{ $p$  covers  $s$} \}$ which can be seen as the starting point of a connection. 
\end{definition}
Notice that this agrees with the usual notion of $1$-page $\{E^1_p\}_{p\in\Z}=\{S[p-1,p-1,p,p]\}_{p\in\Z}$ for classical spectral sequences seen as part of a spectral system over $\Z$.

The essence of the study of connections for higher spectral sequences can be intuitively described as follows. First, one selects a suitable collection of downsets in $D(\Z^m)$ to employ as indices of terms of the higher spectral sequence, and uses Proposition~\ref{prop:homology_z1b3} to describe how computing homology affects the \ql shape\qr\ of the downsets. Then, applying a technical result \cite[Lemma 3.8]{matschke2013successive}, one can identify $4$-tuples of downsets with different shapes which determine the naturally isomorphic terms of the higher spectral sequence, a method which, if used properly, can allow to iterate the procedure. The use of Proposition~\ref{prop:homology_z1b3} limits our choice to $4$-tuples of downsets satisfying its hypotheses; for this reason,  the downsets we consider are often simply obtained as translations in $\Z^m$ of a single downset. 

Here we introduce the \emph{secondary connections} presented in \cite[\S 3.2]{matschke2013successive}, which play a prominent role in the generalization of the Leray--Serre spectral sequence. Even though a more general and flexible framework for this kind of connections was introduced by Matschke in~\cite{Matschke2014a}, for the purpose of this work we prefer the simpler and more explicit description of~\cite{matschke2013successive}, to which we address the interested reader also for details on other kinds of connections.
In Remarks \ref{rmk:higher_spseq_2}  and \ref{rmk:higher_spseq_m} we will show that the simplicial version of the higher Leray--Serre spectral sequence we introduce in this work carries over to the additional structure of~\cite{Matschke2014a}, pointing out however that the secondary connections of \cite[\S~3.2]{matschke2013successive} are ideal from a computational point of view for the application of the effective homology technique.

Having defined the $1$-page of a higher spectral sequence over $D(\Z^m)$, we want now to introduce a notion of \emph{$2$-page}, which generalizes the usual one for classical spectral sequences and will appear even more \ql natural\qr\ in light of the results we will present later. Secondary connections represent a way to connect the $1$-page to the $2$-page computing homology $m$ times; the $2$-page can be then connected to the homology $H_*(C_*)$ in different fashions, which we will not detail here.

For $1\le k\le m$ define the automorphism $\varphi_k : \Z^m \to \Z^m$ as the map sending $X=(x_1, \ldots ,x_m)$ to
\[  \varphi_k (X) \= \left(  x_{k+1}, x_{k+2} ,\ldots ,x_m , \sum_{i=1}^k x_i ,\sum_{i=2}^k x_i ,\ldots ,x_k  \right) .
\]
Let $\le_{\text{lex}}$ denote the \emph{lexicographic order} on $\Z^m$. 
For $P\in \Z^m$ and $1\le k\le m$ define the downset
\[ T^k_P \= \left\{ X\in \Z^m \mid \varphi_k (X) \le_{\text{lex}} \varphi_k (P)  \right\} .
\]
\begin{figure}[ht]
\centering
\includegraphics[scale=0.7]{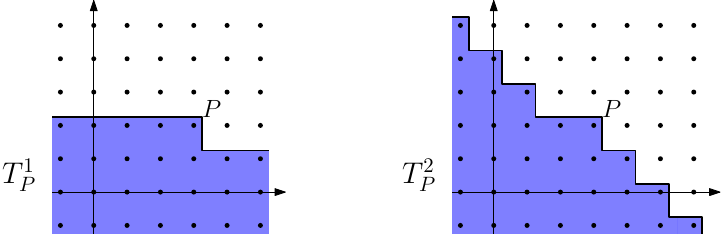}
\caption{Example of the downsets $T^1_P$ and $T^2_P$ in $\Z^2$ for $P=(3,2)$.}
\label{fig:13_TP}
\end{figure}
Let $e_i=(0,\ldots ,1,\ldots,0)$ be the element of $\Z^m$ whose only non-null entry is a~$1$ at position $i$, for $1\le i\le m$.  Given $P=(p_1,\ldots ,p_m)\in \Z^m$ and $1\le k\le m$ define the following downsets, where by convention $e_0 \= 0$ and $e_{-1} \= -e_m$:
\begin{equation}
\label{eq:TP_k}
\begin{aligned}
p(P;k) & \= T^k_{P}     \\
s(P;k) & \= p(P;k) \setminus \{ P\}  = T^k_{P+e_{k-1}-e_k}     \\
z(P;k) & \= p(P;k) -e_k =T^k_{P-e_k}     \\
b(P;k) & \= s(P;k) +e_k =T^k_{P+e_{k-1}}     \\
z^{*}(P;k) & \= z(P;k)\setminus \{ P-e_k \} =T^k_{P+e_{k-1}-2e_k}     \\
b^{*}(P;k) & \= b(P;k)\cup \{ P+e_k \} =T^k_{P+e_k} .    
\end{aligned}
\end{equation}
The $4$-tuples of downsets $z\le s\le p\le b$ and $z^{*} \le s\le p\le b^{*}$ define respectively the terms
\begin{equation}
\label{eq:defS,S*}
\begin{aligned}
S(P;k) &\= S[z(P;k),s(P;k),p(P;k),b(P;k)] , \\
S^{*}(P;k) &\= S[z^{*}(P;k),s(P;k),p(P;k),b^{*}(P;k)] .
\end{aligned}
\end{equation}
%
In this context, we consider as $1$-page of the higher spectral sequence the collection of the terms $S_n(P;1)=H_n(F_{p(P;1)}/F_{s(P;1)})=H_n(F_{p(P;1)}/F_{p(P;1)\setminus \{ P\} })$, for all $P\in \Z^m$.

\begin{definition}
\label{def:2page_S_downsets}
We call \emph{$2$-page} of a higher spectral sequence the collection of terms $S^{*}(P;m)$, for all $P\in \Z^m$.
\end{definition}

The following two lemmas describe a way to connect the terms $S(P;1)$ of the $1$-page to the terms $S^{*} (P;m)$ of the $2$-page.
\begin{lemma}[\cite{matschke2013successive}, Lemma 3.14]
\label{lem:differentials-e_k}
There exist differentials \emph{in direction $-e_k$},
\[ d: S(P;k) \to S(P-e_k;k) ,
\]
induced by the differential maps of $C_*$, such that the homology at the middle term of 
\[ S(P+e_k;k) \xrightarrow{d'} S(P;k) \xrightarrow{d} S(P-e_k;k)
\]
is $S^{*} (P;k)$.
\end{lemma}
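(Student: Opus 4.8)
The plan is to derive Lemma~\ref{lem:differentials-e_k} as a direct application of the general machinery already set up for spectral systems in Section~\ref{sec:SSys_over_downsets}, specialized to the particular downsets in~(\ref{eq:TP_k}). First I would verify that the six downsets $z(P;k)$, $s(P;k)$, $p(P;k)$, $b(P;k)$, $z^*(P;k)$, $b^*(P;k)$ attached to a fixed $P$ and $k$ are linearly ordered by inclusion as claimed, i.e.\ $z^*(P;k)\subseteq z(P;k)\subseteq s(P;k)\subseteq p(P;k)\subseteq b(P;k)\subseteq b^*(P;k)$. This is a purely combinatorial check using the description $T^k_Q=\{X:\varphi_k(X)\le_{\mathrm{lex}}\varphi_k(Q)\}$: since $\varphi_k$ is an automorphism of $\Z^m$ and $\le_{\mathrm{lex}}$ is a total order, $T^k_Q\subseteq T^k_{Q'}$ iff $\varphi_k(Q)\le_{\mathrm{lex}}\varphi_k(Q')$, so everything reduces to comparing the lexicographic images of $P-2e_k$, $P-e_k$, $P+e_{k-1}-e_k$, $P$, $P+e_{k-1}$, $P+e_k$ (shifted appropriately), together with checking that the set-theoretic descriptions such as $s(P;k)=p(P;k)\setminus\{P\}$ and $b(P;k)=s(P;k)+e_k$ really coincide with the corresponding $T^k$'s. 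The key point is that $\varphi_k$ is engineered precisely so that adding $e_k$ or $e_{k-1}$ to the index $Q$ produces a \emph{one-step} move in the lexicographic order on $\varphi_k(Q)$, which is what makes $p$ cover $z$ through $s$, etc.

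Next I would exhibit the differential $d:S(P;k)\to S(P-e_k;k)$. By the general remarks after Definition~\ref{def:sp_sys_def1}, the differential of $C_*$ induces a map $S[z_2,s_2,p_2,b_2]\to S[z_1,s_1,p_1,b_1]$ whenever $z_1\le s_1\le p_1\le b_1$, $z_2\le s_2\le p_2\le b_2$, and the cross-inequalities $z_1\le z_2$, $s_1\le s_2$, $p_1\le p_2$, $b_1\le b_2$, $z_2\le p_1$, $s_2\le b_1$ hold. Taking the target $4$-tuple to be $(z(P-e_k;k),s(P-e_k;k),p(P-e_k;k),b(P-e_k;k))$ and the source to be $(z(P;k),s(P;k),p(P;k),b(P;k))$, these reduce (using $p(P;k)-e_k=z(P;k)$ and the analogous shift identities in~(\ref{eq:TP_k})) to the single chain of inclusions verified in the first step, evaluated for both $P$ and $P-e_k$. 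In particular $z(P;k)=p(P-e_k;k)$ and $s(P;k)=b(P-e_k;k)$, so we are exactly in the situation $z_2=p_1$, $s_2=b_1$ where, as recalled in the excerpt, the kernel and cokernel have clean expressions; this is what will let the homology computation go through.

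Then, to identify the homology at the middle of $S(P+e_k;k)\xrightarrow{d'}S(P;k)\xrightarrow{d}S(P-e_k;k)$, I would invoke Proposition~\ref{prop:homology_z1b3} directly. The three $4$-tuples involved are those of $P+e_k$, $P$, $P-e_k$; the proposition's hypothesis is precisely the pattern of equalities $z_2=p_1$, $s_2=b_1$ (here $z(P;k)=p(P-e_k;k)$, $s(P;k)=b(P-e_k;k)$) and $z_3=p_2$, $s_3=b_2$ (here $z(P+e_k;k)=p(P;k)$, $s(P+e_k;k)=b(P;k)$), all of which are instances of the shift identities in~(\ref{eq:TP_k}). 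The proposition then yields $\kr d/\im d' = S[s_1,s_2,p_2,p_3]=S[s(P-e_k;k),\,s(P;k),\,p(P;k),\,p(P+e_k;k)]$. Finally I would check that $s(P-e_k;k)=z^*(P;k)$ and $p(P+e_k;k)=b^*(P;k)$ — again a lexicographic-order identity, since $s(P-e_k;k)=T^k_{P-e_k+e_{k-1}-e_k}=T^k_{P+e_{k-1}-2e_k}=z^*(P;k)$ and $p(P+e_k;k)=T^k_{P+e_k}=b^*(P;k)$ — so that $\kr d/\im d'=S[z^*(P;k),s(P;k),p(P;k),b^*(P;k)]=S^*(P;k)$, as desired.

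The main obstacle, and the only place requiring genuine care rather than bookkeeping, is the first step: correctly establishing the dictionary between the set-theoretic formulas $p\setminus\{P\}$, $p-e_k$, $s+e_k$, etc., and the $T^k$-descriptions, and in particular confirming that the relevant covers are genuine covers in $D(\Z^m)$ (which is implicitly what makes the differentials and the homology identification nontrivial). This hinges on a clear understanding of how $\varphi_k$ transports the coordinate order on $\Z^m$ to a lexicographic order, so that translation by $e_k$ or $e_{k-1}$ in the index becomes a unit step; once that is pinned down, the inequalities needed for the induced differential and for Proposition~\ref{prop:homology_z1b3} all follow mechanically, and the lemma is essentially a specialization of results already in hand. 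I would also remark that the total-degree shift is the standard one (each induced differential lowers total degree by $1$), so the statement is understood degreewise exactly as in the paragraph following Proposition~\ref{prop:homology_z1b3}.
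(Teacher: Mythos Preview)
Your proposal is correct and takes essentially the same approach as the paper: the paper simply remarks that Lemma~\ref{lem:differentials-e_k} is an application of Proposition~\ref{prop:homology_z1b3} to the downsets defined in~(\ref{eq:TP_k}), and you carry out exactly this verification in detail, checking the required identities $z(P;k)=p(P-e_k;k)$, $s(P;k)=b(P-e_k;k)$, $s(P-e_k;k)=z^*(P;k)$, $p(P+e_k;k)=b^*(P;k)$ and reading off the homology as $S^*(P;k)$.
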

\begin{figure}[ht]
\centering
\includegraphics[scale=0.55]{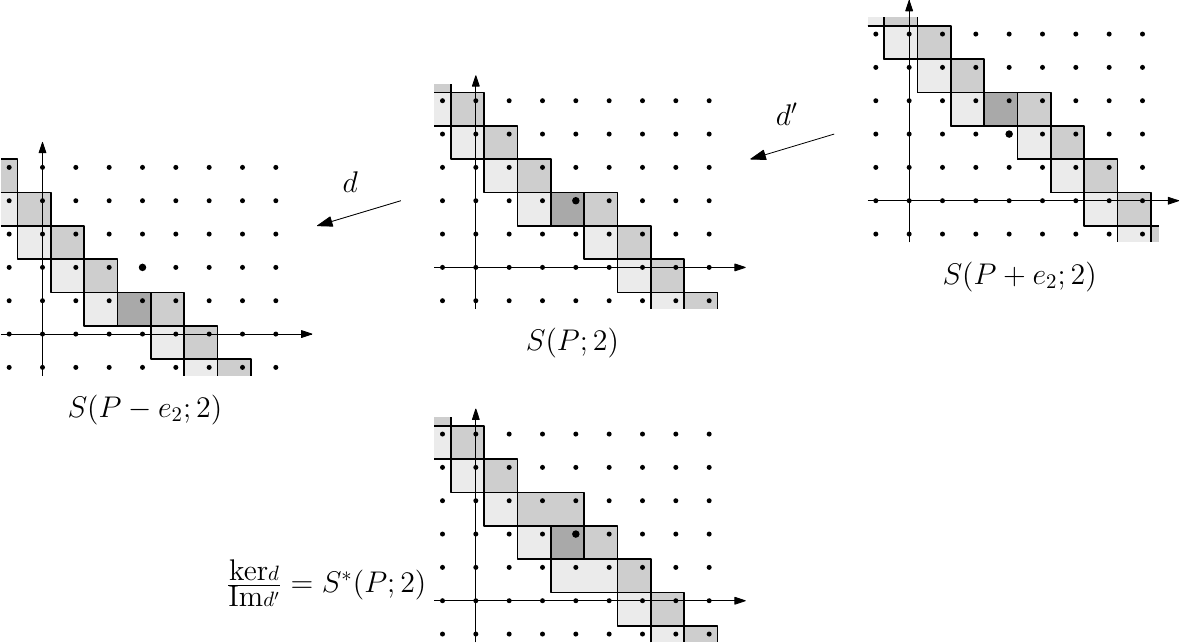}
\caption{Representation of Lemma~\ref{lem:differentials-e_k} for $m=k=2$. The differentials $d$ and $d'$ between terms of the $1$-page are in direction $-e_2=-(0,1)$, and the homology $\ker d / \im d'$ is a term of the $2$-page. The point $P=(3,2)$ is highlighted in the picture.}
\label{fig:ddS_secondary}
\end{figure}
Lemma~\ref{lem:differentials-e_k} is an application of Proposition~\ref{prop:homology_z1b3} (see Figure~\ref{fig:ddS_secondary}) to our current situation: a higher spectral sequence where some distinguished downset are defined by~(\ref{eq:TP_k}). The following result identifies terms of the higher spectral sequence which are naturally isomorphic:
\begin{lemma}[\cite{matschke2013successive}, Lemma 3.15]
\label{lem:isoSS*}
For any $1\le k\le m-1$ there is a natural isomorphism
\[ S^{*} (P;k)\cong S(P;k+1) .
\]
\end{lemma}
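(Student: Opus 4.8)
The plan is to prove Lemma~\ref{lem:isoSS*} by exhibiting explicit equalities of downsets and applying the technical result \cite[Lemma 3.8]{matschke2013successive} mentioned in the excerpt, which identifies terms of the spectral system that are naturally isomorphic. Recall that a term $S[z,s,p,b]$ is a subquotient of $C_*$ depending on the four downsets only through their ``effective shape'': intuitively, through the pieces $s\setminus z$, $p\setminus s$ and $b\setminus p$ together with how they sit inside $C_*$. The cited lemma makes this precise in the distributive setting, giving conditions on two $4$-tuples under which the induced map on subquotients is an isomorphism. So the whole proof reduces to a bookkeeping computation with the downsets $T^k_P$.

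\medskip\noindent\textbf{Key steps.} First I would unwind the definitions: by~(\ref{eq:defS,S*}) and~(\ref{eq:TP_k}),
\[
S^*(P;k) = S[\,T^k_{P+e_{k-1}-2e_k},\ T^k_{P+e_{k-1}-e_k},\ T^k_{P},\ T^k_{P+e_k}\,],
\]
\[
S(P;k+1) = S[\,T^{k+1}_{P-e_{k+1}},\ T^{k+1}_{P+e_k-e_{k+1}},\ T^{k+1}_{P},\ T^{k+1}_{P+e_k}\,].
\]
Second, I would compare the automorphisms $\varphi_k$ and $\varphi_{k+1}$ on $\Z^m$. A direct inspection of the formula for $\varphi_k$ shows that $\varphi_{k+1}$ is obtained from $\varphi_k$ essentially by a cyclic relabelling of coordinates together with one extra partial-sum coordinate; concretely one checks an identity of the form $\varphi_{k+1} = \pi \circ \varphi_k \circ \sigma$ for suitable coordinate permutations, or more usefully a relation comparing the lexicographic comparisons $\varphi_k(X)\le_{\mathrm{lex}}\varphi_k(Q)$ and $\varphi_{k+1}(X)\le_{\mathrm{lex}}\varphi_{k+1}(Q)$. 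Third, using this relation I would establish the crucial set-theoretic identities relating the two families: the point to prove is that the relevant \emph{differences} of downsets coincide, e.g. that
\[
T^k_{P}\setminus T^k_{P+e_{k-1}-e_k} \;=\; \{P\} \;=\; T^{k+1}_{P}\setminus T^{k+1}_{P+e_k-e_{k+1}},
\]
and analogously that the ``outer'' differences $T^k_{P+e_k}\setminus T^k_P$ and $T^{k+1}_{P+e_k}\setminus T^{k+1}_P$, and the ``inner'' differences involving $z$ and $z^*$, agree as subsets of $\Z^m$ (even though the downsets themselves differ). Fourth, since the filtration is distributive, $F_p$ for each of these downsets is the direct sum of the graded pieces $C_P$ over $P\in p$ (in the decomposable case) or more generally is controlled by the differences; feeding the identities from step three into \cite[Lemma 3.8]{matschke2013successive} then yields the natural isomorphism $S^*(P;k)\cong S(P;k+1)$.

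\medskip\noindent\textbf{Main obstacle.} The hard part will be step two and three: pinning down precisely how $\varphi_k$ and $\varphi_{k+1}$ interact and then verifying that, despite $T^k$ and $T^{k+1}$ being genuinely different downsets, the three successive set-differences appearing in $S^*(P;k)$ match those appearing in $S(P;k+1)$. This is a purely combinatorial lemma about lexicographic orders under the partial-sum substitution, and the bookkeeping is delicate because the shifts $e_{k-1}, e_k, e_{k+1}$ enter differently in the two $4$-tuples; a low-dimensional check ($m=2$, as in Figure~\ref{fig:ddS_secondary}) is a good sanity test but the general argument needs care with the boundary conventions $e_0=0$ and $e_{-1}=-e_m$. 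Once these downset identities are in hand, the appeal to the distributivity hypothesis and to \cite[Lemma 3.8]{matschke2013successive} is routine and the naturality is automatic from the construction.
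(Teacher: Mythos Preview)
The paper does not give its own proof of this lemma; it merely states it and cites \cite[Lemma~3.15]{matschke2013successive}. So there is no in-paper argument to compare against, and your proposal has to be evaluated on its own merits.

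Your overall strategy---write out the two $4$-tuples of downsets explicitly and then invoke \cite[Lemma~3.8]{matschke2013successive}---is the right one and is essentially Matschke's own route. However, your step three contains a genuine error. You claim that the ``outer'' differences $T^k_{P+e_k}\setminus T^k_P$ and $T^{k+1}_{P+e_k}\setminus T^{k+1}_P$ (and likewise the ``inner'' ones) agree as subsets of $\Z^m$. They do not. Take $m=2$, $k=1$: then $T^1_{P+e_1}\setminus T^1_P=\{P+e_1\}$ is a single point, whereas $T^2_{P+e_1}\setminus T^2_P$ is infinite---it contains every $(x_1,x_2)$ with $x_1+x_2=p_1+p_2+1$ and $x_2\le p_2$, together with every $(x_1,x_2)$ with $x_1+x_2=p_1+p_2$ and $x_2>p_2$. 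Only the middle difference $p\setminus s=\{P\}$ genuinely coincides for the two $4$-tuples; the outer and inner ones do not.

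What \cite[Lemma~3.8]{matschke2013successive} actually requires is weaker than equality of set differences: it gives intersection and union conditions on two $4$-tuples of downsets (in the distributive setting) under which the induced map between the corresponding terms is an isomorphism. The bookkeeping you need in step three is therefore not to prove equalities of differences but to verify those inclusion conditions---roughly, that the ``extra'' points appearing in (say) $T^{k+1}_{P+e_k}$ beyond $T^k_{P+e_k}$ already lie in one of the smaller downsets of the other $4$-tuple, so that they contribute nothing new to the subquotient. Once you reformulate step three this way, the rest of your outline (including the appeal to distributivity and the automatic naturality) goes through.
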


The core idea of secondary connections consists in using Lemma~\ref{lem:differentials-e_k} and Lemma~\ref{lem:isoSS*} alternately: starting from the $1$-page $\{ S(P;1)\}$ and taking homology in direction $-e_1$ one determines the terms $S^{*}(P;1)\cong S(P;2)$, then taking homology in direction $-e_2$ one determines the terms $S^{*}(P;2)\cong S(P;3)$; continuing alternating between homology and natural isomorphisms one eventually determines the $2$-page $\{ S^{*}(P;m)\}$. As we mentioned before, the $2$-page can then be connected to the homology $H_*(C_*)$ in different ways, for example using \emph{lexicographic} connections (we refer to~\cite{matschke2013successive} for further details).   
%

\section{Higher Leray--Serre spectral sequences in a simplicial framework}
\label{sec:serre_spectral_systems}

The first motivating example of Matschke's work~\cite{matschke2013successive} consists in higher spectral sequences defined from \emph{towers of fibrations}, that is, sequences of fibrations such that the total space of each is the base of the previous one:

\begin{equation}
\begin{tikzcd}
\label{eq:tower_of_fibrations} 
G_0 \arrow{r}{}  & E_0 \arrow{d}{} \\
\cdots  & \cdots \arrow{d}{} \\
G_{m-1} \arrow{r}{}  & E_{m-1} \arrow{d}{} \\
 & B
\end{tikzcd}
\end{equation}

In this situation, as the usual goal of computation is the homology $H_*(E_0)$ of the total space  of the upper fibration, one typically applies several times the Leray--Serre spectral sequence~\cite{Ser51}, assuming that the homology of $G_0, \ldots,G_{m-1}$ and $B$ is known. Leaving aside extension problems, one can think to determine $H_*(E_{m-1})$ from $H_*(B)$ and $H_*(G_{m-1})$ via a first Leray--Serre spectral sequence, using then a second Leray--Serre spectral sequence to try to determine $H_*(E_{m-2})$ from $H_*(E_{m-1})$ and $H_*(G_{m-2})$, and so on. A suitable higher spectral sequence, defined over the poset $D(\Z^m)$, represents a unified framework ``containing'' all these spectral sequences and offering a larger number of connections to the limit $H_*(E_0)$. Moreover, the $2$-page of the higher spectral sequence satisfies a formula which beautifully generalizes that of Serre for a fibration $G \rightarrow E \rightarrow B$, whose $2$-page can be expressed in terms of the homologies of the fiber $G$ and the base $B$ and converges to the homology of the total space $E$. Let us state here the result in the topological framework, as presented in \cite[Theorem~5.1]{matschke2013successive}; in Section \ref{sec:second_page} we will prove an analogous result in our simplicial framework.

\begin{theorem}
\label{thm:Serre-Matschke}
Consider a tower of $m$ fibrations of topological spaces, in the sense of Serre. There exists an associated higher spectral sequence over $D(\Z^m)$ with  $2$-page
\begin{equation*}
\label{eq:2-pageSerreMatschke}
S_n^*(P;m) \cong H_{p_m}(B;H_{p_{m-1}}(G_{m-1}; \ldots H_{p_{1}} (G_{1}; H_{p_0} (G_0)))),
\end{equation*}
with $P\=(p_1,\ldots ,p_{m})\in \Z^m$ and $p_0\= n-p_1 -\cdots -p_m$, which under suitable hypotheses (see \cite{matschke2013successive}) converges to $H_*(E_0)$.
\end{theorem}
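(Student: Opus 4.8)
The plan is to prove the simplicial analogue of Theorem~\ref{thm:Serre-Matschke} by iterating the effective homology / twisted Eilenberg--Zilber machinery already set up in the excerpt, rather than by topological arguments. First I would fix a simplicial tower of principal fibrations: a sequence of twisting operators $\tau_k$ producing $E_{k-1} = G_{k-1} \times_{\tau_{k-1}} E_k$ for $k=1,\dots,m$ (with $E_m = B$ assumed $1$-reduced), and I would build on $C_*(E_0)$ the $\Z^m$-filtration (hence, by the construction of Section~\ref{sec:SSys_over_downsets}, the associated distributive $D(\Z^m)$-filtration) whose $k$-th component is the pullback of the skeletal filtration of $C_*(E_k)$. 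The key structural input is the twisted Eilenberg--Zilber theorem (Theorem~\ref{thm:twistedEZ}): iterating it down the tower yields a reduction from $C_*(E_0)$ to an $m$-fold twisted tensor product $C_*(G_0)\otimes_t C_*(G_1)\otimes_t\cdots\otimes_t C_*(G_{m-1})\otimes_t C_*(B)$, and by Proposition~\ref{prop:red_tensor} together with the perturbation lemmas (Theorems~\ref{thm:tpl} and~\ref{thm:bpl}) this descends, using the effective homologies of the $G_i$ and $B$, to an equivalence with an effective twisted tensor product. The point is that each of these reductions is compatible with the relevant pieces of the filtration, so Corollary~\ref{coro:thm_Ana_h_comp} (or Theorem~\ref{thm_Ana_orderred_gen} for the homotopy condition) transports the spectral system terms, and in particular the terms $S^*(P;m)$ of Definition~\ref{def:2page_S_downsets}, isomorphically to the corresponding terms computed on the twisted tensor product.

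Next I would compute the $2$-page terms on the model side. On the $m$-fold (untwisted) tensor product $C_*(G_0)\otimes\cdots\otimes C_*(G_{m-1})\otimes C_*(B)$, equipped with the tensor-product filtration, the spectral system is the "trivial" one whose terms factor through the Künneth/universal-coefficient formula; here the first iteration of the secondary connection (Lemma~\ref{lem:differentials-e_k} and Lemma~\ref{lem:isoSS*}) computes homology in direction $-e_1$, extracting $H_{p_0}(G_0)$ as a coefficient group, then in direction $-e_2$, extracting $H_{p_1}(G_1;-)$, and so on, so that $S^*(P;m)$ on the untwisted model is exactly the iterated group $H_{p_m}(B;H_{p_{m-1}}(G_{m-1};\ldots H_{p_0}(G_0)))$ from the statement. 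The remaining work is to show the \emph{twisted} perturbation does not change this: the perturbation $\delta$ from~(\ref{eq:pert_twist}) only involves the last face operator and is therefore filtration-lowering by exactly one step in each fibration coordinate, so on the associated graded of the $D(\Z^m)$-filtration it vanishes, and hence the $1$-page terms $S(P;1)$ of the twisted and untwisted models agree; then Proposition~\ref{prop:homology_z1b3} (the homology-of-three-terms lemma), applied iteratively exactly as in the secondary connection, shows the whole computation of the $2$-page is insensitive to the perturbation. This is the step I would write out most carefully, checking degree-by-degree that $\delta$ respects the multi-filtration.

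The main obstacle I anticipate is the bookkeeping of the \emph{nilpotency hypothesis} for the Basic Perturbation Lemma at each stage of the iteration, and the verification that the composite homotopy operator (assembled via Propositions~\ref{prop:red_comp} and~\ref{prop:red_tensor} from the Shih maps and the homotopies of the effective homologies) satisfies the filtration conditions $h(F_z)\subseteq F_s$ and $h(F_p)\subseteq F_b$ of Theorem~\ref{thm_Ana_orderred_gen}. The Shih map $h$ of the Eilenberg--Zilber reduction raises simplicial degree and mixes the two tensor factors, so one must check it still respects the skeletal-pullback filtration of the tower; the $1$-reducedness assumptions on $B$ (and, after iteration, on the successive $E_k$, which would need to be imposed or derived) are precisely what guarantees the nilpotency and keeps the perturbation local. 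I would organize this as a lemma on the compatibility of the iterated twisted Eilenberg--Zilber reduction with the multi-filtration, proved by induction on $m$, and then invoke it together with the model computation above to conclude. Convergence, as in Matschke's statement, I would leave under the stated hypotheses of~\cite{matschke2013successive}, since it concerns the behaviour of the filtration at $\pm\infty$ rather than the identification of the $2$-page.
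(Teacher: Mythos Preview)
Note first that the paper does not prove Theorem~\ref{thm:Serre-Matschke} itself: this is Matschke's topological result, merely quoted here. What the paper proves is the simplicial analogue (Theorem~\ref{thm:Serre-Matschke-simpl}), and that is also what you are proposing to prove, so your target is correct.

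Your overall architecture matches the paper's Section~\ref{sec:second_page} closely: reduce $C_*(E_0)$ to the iterated twisted tensor product via the twisted Eilenberg--Zilber reductions of diagram~(\ref{eq:mcrpr-efhm}); compute the $2$-page on the \emph{untwisted} tensor product by running the secondary connection (your Künneth/universal-coefficient step is exactly Proposition~\ref{prop:notwist-2page}); then argue that the twist does not disturb the $2$-page. The paper does precisely this, with one organisational difference: rather than arguing that the $1$-page terms $S(P;1)$ of twisted and untwisted models agree and then propagating through Lemmas~\ref{lem:differentials-e_k}--\ref{lem:isoSS*}, the paper isolates a clean abstract criterion (Proposition~\ref{prop:delta} and Corollary~\ref{cor:isopages}) on the perturbation $\delta=d'-d$ that directly gives $S^*(P;m)\cong S'^*(P;m)$, and then checks that criterion (Proposition~\ref{prop:isopages2}).

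There is, however, a genuine imprecision in your sketch that would bite if carried out as written. You assert that the perturbation $\delta$ ``only involves the last face operator and is therefore filtration-lowering by exactly one step in each fibration coordinate''. This is true for the perturbation~(\ref{eq:pert_twist}) on the \emph{Cartesian} side, but after pushing it through the BPL to the tensor-product side the induced perturbation is \emph{not} coordinate-wise decreasing: by Proposition~\ref{prop:perturb_filtr} it lowers the base degree by at least~$2$ but can \emph{raise} the fibre degree (see Figure~\ref{fig:T2_Filtr_Tensor} and the $\bar d^\ell$ terms in the proof of Proposition~\ref{prop:F_tensor}). Consequently the naive $\Z^m$-filtration by multidegree is not even a filtration of chain complexes on the twisted tensor product, and your claim about $1$-page terms agreeing does not go through as stated. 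The paper's fix is exactly the choice of the ``staircase'' downsets $T^m_P$: these are designed so that the perturbation, while not lowering each coordinate, still lands in $F_{p(P+e_{m-1}-2e_m;m)}$, which is the inclusion~(\ref{eq:cond_diff_pPk}) needed in Corollary~\ref{cor:isopages}. Once you replace ``filtration-lowering in each coordinate'' by this sharper statement, your plan and the paper's proof coincide. Your anticipated obstacle about the Shih homotopy is also real and is handled in the paper by Propositions~\ref{prop:rdct2-compatible}, \ref{prop:maps_rho'1} and~\ref{prop:maps_rho'2}: the homotopies satisfy only $h(F_{p(P;m)})\subseteq F_{p(P+e_m;m)}$, which is why the isomorphism of spectral systems holds only from the $2$-page on (Remark~\ref{rem:mainthm}), not for all terms as Corollary~\ref{coro:thm_Ana_h_comp} would require.
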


As in the case of classical spectral sequences, this formula provides a description of an initial page of the higher spectral sequence; however, other terms $S[z,s,p,b]$ can only be determined in some simple cases. A first issue is that the differentials between terms of the 2-page, unlike the terms themselves, are not given explicitly. A second problem to determine the higher spectral sequence computationally arises when the involved chain complexes are not of finite type, as we have mentioned before, since standard algorithms for homology are bound to work with finite bases and matrices. However, the method we introduce in the present work is able to circumvent these obstacles by constructing the tower of fibrations in a simplicial framework and then using the effective homology of $E_0$ (which can be built automatically by Kenzo when $G_0, \ldots,G_{m-1}$ and $B$ are objects with effective homology) in order to determine the higher spectral sequence of the tower of fibrations by means of a higher spectral sequence associated with a chain complex of finite type.



In order to simplify the description of the results and make them more understandable, in Sections~\ref{sec:serre_spectral_systems} and~\ref{sec:efhm_for_serrespectralsystems} we consider the simple case of towers of two fibrations, as in diagram~(\ref{eq:tower-of-2-fibrations}). All the results we present carry over to the general case of $m$ fibrations, and in Section~\ref{sec:m_fibrations} we will provide a sketch of the proofs.

\begin{equation}
\label{eq:tower-of-2-fibrations}
\begin{tikzcd}
G \arrow{r}{}  & E  \arrow{d}{} \\
M \arrow{r}{}  & N \arrow{d}{} \\
 & B
\end{tikzcd}
\end{equation}

Let us suppose now that the two fibrations of diagram~(\ref{eq:tower-of-2-fibrations}) correspond to \emph{twisted Cartesian products}, introduced in Definition~\ref{def:twcrpr}. In other words, we assume that $G$ and $M$ are simplicial groups, $B$ is a simplicial set and $N$ and $E$ are defined as twisted Cartesian products $N \= M \times_{\tau_1} B$ and $E \= G \times_{\tau_0} N$, with   $\tau_1 : B \rightarrow M$ and $\tau_0 : N \rightarrow G $ the corresponding twisting operators. We choose this type of fibrations because they are the ones implemented in the Kenzo system and allow us to define generalized filtrations suitable for computations in the correct way. Furthermore, we assume $M$ and $B$ to be $1$-reduced (we explain the role of this hypothesis in Remark \ref{rmk:hyp1reduced}). Let us also remark that the base $B$ is not assumed
to be a Kan complex. In
contrast, the fibers $G$ and $M$  are modeled as simplicial groups,
so their underlying simplicial sets are in particular Kan complexes.

The construction of the higher Leray--Serre spectral sequence associated with (\ref{eq:tower-of-2-fibrations}) requires the definition of a filtration over the poset $D(\Zset^2)$ of the chain complex associated with $E \= G \times_{\tau_0} N \= G \times_{\tau_0} (M \times_{\tau_1}  B)$. 
The $D(\Z^2)$-filtration we consider for the chain complex $C_\ast(E)\= C_\ast(G \times_{\tau_0} (M \times_{\tau_1} B))$ is defined by means of the 
downsets $p(P;2)\= T^2_P$ introduced in Section~\ref{sec:SSys_over_downsets}. In fact, we define a filtration of the form $\{ F_{p(P;2)} \}_{P\in \Z^2}$, whose definition can be extended to produce a $D(\Z^2)$-filtration.

\begin{remark}
\label{rem:filtr_def}
In this section and in Section \ref{sec:efhm_for_serrespectralsystems} we will use the following strategy to define generalized filtrations on a chain complex $C_*$. 
\begin{itemize}
    \item At first we consider a grading over $\Z^2$ which allows us to define a collection $\{ C_P \}_{P\in \Z^2}$ of abelian groups. Each $C_P$ is generated by the elements of $C_*$ having \emph{(filtration) bidegree} $P=(p_1,p_2)\in \Z^2$ according to the considered grading.
    \item In the cases we will consider, the simple definition $F_p\= \bigoplus_{P\in p} C_P$ for $p\in D(\Z^2)$ does not provide a valid filtration of \emph{chain complexes}, because it does not satisfy $d(F_p)\subseteq F_p$ for all $p\in D(\Z^2)$. For example, simple downsets like $s(P)\= \{ Q\in \Z^2 \mid Q \le P \}$ may not fulfill $d(F_{s(P)})\subseteq F_{s(P)}$. We have therefore to consider a different definition.
    \item We use the downsets of the form $p(P;2)\= T^2_P$, introduced in Section~\ref{sec:SSys_over_downsets}, to define a filtration $\{ F_{p(P;2)} \}_{P\in \Z^2}$, where $F_{p(P;2)}\= \bigoplus_{X\in p(P;2)}C_X$. We have to prove that this provides a valid filtration of chain complexes, that is the condition $d(F_{p(P;2)})\subseteq F_{p(P;2)}$ is satisfied for all $p(P;2)$. Also, we show that the use of the downsets of the form $p(P;2)$ to define our filtrations is a \ql natural\qr\ choice.
    \item The definition of a filtration $\{ F_{p(P;2)} \}_{P\in \Z^2}$ can be extended to a filtration over $D(\Z^2)$ defining, for any $p\in D(\Z^2)$,
    \[ \overline{F}_p \= \{ \sigma \in C_* \mid \exists P \in \Z^2 \mbox{ such that } \sigma \in F_{p(P;2)} \mbox{ and } p(P;2) \subseteq p \}.
    \]
\end{itemize}
We introduce the extension of the filtrations $\{ F_{p(P;2)} \}_{P\in \Z^2}$ to $D(\Z^2)$-filtrations for the sake of generality, but the dependence on the downsets of the form $p(P;2)$ appears clear. In particular, as we mentioned, $\overline{F}_p$ is not equal to $\bigoplus_{X\in p}C_X$ in general. From now on in this work, we will define and consider filtrations of the form $\{ F_{p(P;2)} \}_{P\in \Z^2}$, as the extensions of all definitions and results to the case of generalized $D(\Z^2)$-filtrations $\{ \overline{F}_p \}$ is straightforward. 
\end{remark}



\begin{definition}
\label{defn:2crpr-filtration}
Let $E\= G \times_{\tau_0} (M \times_{\tau_1} B)$ as in diagram~(\ref{eq:tower-of-2-fibrations}) and $P \in \Z^2$. For the downset $p(P;2)\=T^2_{P}$ we define $F_{p(P;2)}$ as the chain subcomplex of $C_*(E)$ generated by the elements $\sigma\= (g,(m,b))$ in $E \= G \times_{\tau_0} (M \times_{\tau_1} B)$ such that $(x_1,x_2)\=(\deg(m,b)-\deg b, \deg b) \in p(P;2)$, where $\deg$ denotes the degeneracy degree introduced in Definition~\ref{def:deg_nondeg} and Remark~\ref{rmk:degCartesian}. 
\end{definition}


Notice that, as mentioned in Remark \ref{rem:filtr_def}, in Definition \ref{defn:2crpr-filtration} we introduce a grading over $\Z^2$, given by $(x_1,x_2)\=(\deg(m,b)-\deg b, \deg b)$, on the generators of $C_*(E)$, and we consider as generators of $F_{p(P;2)}$ only those satisfying $(x_1,x_2)\in p(P;2)$.

The reason why we decided to define a filtration on $C_\ast(E)$ in this way, which may not be intuitively clear, can be better understood considering the following pair of reductions:

\begin{equation}
\label{eq:2crpr-rdct}
\xymatrix @R=5.0mm{
  C_\ast(G \times_{\tau_0} (M \times_{\tau_1} B)) \ar @{=>>}_{\TEZ_0} [d]   \\
  C_*(G) \otimes_{t_0} C_* (M \times_{\tau_1} B) \ar @{=>>}_{\id \otimes_{t_0} \TEZ_1} [d]  \\
  C_* (G) \otimes_{t_0} (C_* (M) \otimes_{t_1} C_* (B)) &  
}
\end{equation}

The first reduction $\TEZ_0$ is the \emph{twisted Eilenberg--Zilber reduction} (Theorem~\ref{thm:twistedEZ}) of the fibration $G \rightarrow E \rightarrow N$; a reduction from $C_\ast(G \times_{\tau_0}(M\times_{\tau_1}B))$ to $C_\ast(G)\otimes_{t_0} C_\ast(M \times_{\tau_1} B)$ is obtained, where the symbol $\otimes_{t_0}$ represents a twisted (perturbed) tensor product, induced by the twisting operator $\tau_0$. Then, we consider
a new reduction given by the tensor product of the trivial reduction $\id$ of $C_*(G)$ 
and the twisted Eilenberg--Zilber reduction $\TEZ_1 :C_*(M\times_{\tau_1} B) \rrdc C_*(M)\otimes_{t_1} C_*(B)$, and using 
the Basic Perturbation Lemma (Theorem~\ref{thm:bpl}) (with the perturbation to be applied to the differential of $ C_*(G) \otimes C_* (M \times_{\tau_1} B)$ to obtain the
differential of $C_*(G) \otimes_{t_0} C_* (M \times_{\tau_1} B)$) we construct a reduction from $C_*(G) \otimes_{t_0} C_* (M \times_{\tau_1} B)$ to a new twisted
tensor product $C_* (G) \otimes_{t_0} (C_* (M) \otimes_{t_1} C_* (B))$. 

Let us remark that the bottom chain complex $C_* (G) \otimes_{t_0} (C_* (M) \otimes_{t_1} C_* (B)) $ in~(\ref{eq:2crpr-rdct}) has the same underlying graded module as the (non-twisted) tensor product $C_* (G) \otimes (C_* (M) \otimes C_* (B)) $ but the differential map has been perturbed due to the applications of the Basic Perturbation Lemma. The generators of degree $n$ of the chain complex $C_* (G) \otimes_{t_0} (C_* (M) \otimes_{t_1} C_* (B)) $ are elements of the form $\sigma\= g \otimes (m \otimes b)$ such that $g \in G_{p_0}$, $m \in M_{p_1}$ and $ b \in B_{p_2}$ for some $p_0,p_1,p_2 \geq 0$ with $p_0+p_1+p_2=n$. 
Then, it seems a natural choice to define a  filtration on $C_* (G) \otimes_{t_0} (C_* (M) \otimes_{t_1} C_* (B))$ over $D(\Zset^2)$ by using the points $P\=(p_1,p_2)$ to define a grading over $\Z^2$ on the generators. Keeping in mind Remark \ref{rem:filtr_def}, let us denote by $C_P$ the submodule of $C_* (G) \otimes_{t_0} (C_* (M) \otimes_{t_1} C_* (B))$ generated by its generators $\sigma$ of bidegree $P=(p_1,p_2)$. 
As we will detail below, the perturbed differential of $C_* (G) \otimes_{t_0} (C_* (M) \otimes_{t_1} C_* (B))$ applied to a generator of bidegree $P=(p_1,p_2)$ can increase the first or the second coordinate of this grading. 
This means that we cannot simply define a filtration setting $F_p\= \bigoplus_{P\in p} C_P$ for all $p\in D(\Z^2)$, because in general these submodules are not \emph{chain complexes} since they do not necessarily satisfy $d(F_p)\subseteq F_p$. As explained in Remark \ref{rem:filtr_def}, we solve this problem defining a filtration (of chain subcomplexes) of  $C_* (G) \otimes_{t_0} (C_* (M) \otimes_{t_1} C_* (B)) $ by using again the 
downsets $p(P;2)\=T^2_P$ introduced in Section~\ref{sec:SSys_over_downsets}.

\begin{definition}
\label{defn:2tnpr-filtration}
Let $C_\ast\= C_* (G) \otimes_{t_0} (C_* (M) \otimes_{t_1} C_* (B))$ obtained as in diagram~(\ref{eq:2crpr-rdct})  
and let $P\in \Z^2$. For the downset $p(P;2)\=T^2_{P}$ we define $F_{p(P;2)}$ as the chain subcomplex of $C_*$ generated by the elements $\sigma\= g \otimes (m \otimes b)\in C_n$ such that $g \in G_{x_0}, m \in M_{x_1}$ and $ b \in B_{x_2}$ for some $x_0,x_1,x_2 \geq 0$ with $x_0+x_1+x_2=n$ and $(x_1,x_2) \in p(P;2)$.
\end{definition}


In other words, we can state Definition \ref{defn:2tnpr-filtration} in the shorter form
\[ F_{p(P;2)}C_n\= \hspace{-10pt} \bigoplus_{\substack{i+j+k=n, \\ (j,k)\in p(P;2)}} \hspace{-10pt} 
 C_i (G) \otimes_{t_0} (C_j (M) \otimes_{t_1} C_k (B)).
\]

The relation between the chain complexes $C_\ast(E)\= C_\ast(G \times_{\tau_0} (M \times_{\tau_1} B))$ and $ C_* (G) \otimes_{t_0} (C_* (M) \otimes_{t_1} C_* (B))$ by means of the reductions of~(\ref{eq:2crpr-rdct}) motivated our idea to consider the filtration for $C_\ast(E)$ introduced in Definition~\ref{defn:2crpr-filtration}.

Before proving that the filtrations we defined are valid (that is, the submodules we defined are indeed chain subcomplexes), it is convenient to state a fact that will be often employed in the following proofs and arguments.

\begin{remark}
\label{rem:ineq_degrees}
Suppose we have integers $q'\le q$ and $p'\le p$. It clearly follows that  $(q'-p') -(q-p) \le p-p'$. If we define $p_1\= q-p$, $p_2\= p$, $x_1\= q'-p'$ and $x_2\= p'$, we easily see that the previous inequalities are equivalent to
\begin{align*}
  x_1 + x_2 &\le p_1 + p_2 ,\\ 
 x_2 &\le p_2  .
\end{align*}
Our typical application of this fact will be to maps (between chain complexes) reducing both coordinates of the grading we consider, corresponding here to $p$ and $q$, and it is useful if the parameters considered for filtering change to $p_1$ and $p_2$.
\end{remark}

\begin{proposition} \label{prop:Filtr_TwCar}
Definition~\ref{defn:2crpr-filtration} yields a valid filtration of chain subcomplexes. 
\end{proposition}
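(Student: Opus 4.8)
The plan is to check the two conditions required of an $I$-filtration (Definition~\ref{defn:i-filtration}): first that $F_{p(P;2)}\subseteq F_{p(Q;2)}$ whenever $p(P;2)\subseteq p(Q;2)$, and second — the substantial point — that each $F_{p(P;2)}$ is \emph{closed under the differential} $d=\sum_{i=0}^{n}(-1)^i\partial_i$ of $C_*(E)$. Monotonicity is immediate from Definition~\ref{defn:2crpr-filtration}: a generator $\sigma=(g,(m,b))$ lies in $F_{p(P;2)}$ precisely when its bidegree $(x_1,x_2)\=(\deg(m,b)-\deg b,\deg b)$ belongs to $p(P;2)=T^2_P$, so if $T^2_P\subseteq T^2_Q$ every generator of $F_{p(P;2)}$ is already a generator of $F_{p(Q;2)}$; the same observation, combined with the extension formula of Remark~\ref{rem:filtr_def}, gives $F_p\subseteq F_q$ for $p\subseteq q$ in $D(\Z^2)$.

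For the subcomplex property, fix $P$ and a generator $\sigma=(g,(m,b))$ of $F_{p(P;2)}$, so that $\sigma$ is a nondegenerate $n$-simplex of $E=G\times_{\tau_0}(M\times_{\tau_1}B)$ whose bidegree $(a_1,a_2)$ lies in $T^2_P$, with $a_2=\deg b$ and $a_1+a_2=\deg(m,b)$ (the degeneracy degree of $(m,b)$ in the simplicial set $M\times_{\tau_1}B$). First I would note that, by the face formulas of Definition~\ref{def:twcrpr}, for every $0\le i\le n$ the $(M\times_{\tau_1}B)$-component of $\partial_i\sigma$ equals $\partial_i(m,b)$ and its $B$-component equals $\partial_i b$: the twisting operators $\tau_0,\tau_1$ modify only the group component, never the $N$- or $B$-parts. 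Hence, whenever $\partial_i\sigma$ is nondegenerate its bidegree $(x_1,x_2)$ satisfies $x_2=\deg(\partial_i b)$ and $x_1+x_2=\deg(\partial_i(m,b))$.

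Next I would invoke the elementary fact that applying a face operator never increases the degeneracy degree: for any simplicial set $K$, any $x\in K_n$ and any $i$, one has $\deg(\partial_i x)\le\deg x$ (push $\partial_i$ to the right through the degeneracies in the canonical expression of $x$ via the simplicial identities: either $\partial_i$ cancels a degeneracy, leaving the nondegenerate core unchanged, or it reaches the core, whose dimension can only drop). Applied to $b\in B$ and to $(m,b)\in M\times_{\tau_1}B$, this gives $\deg(\partial_i b)\le\deg b$ and $\deg(\partial_i(m,b))\le\deg(m,b)$, which are exactly the hypotheses of Remark~\ref{rem:ineq_degrees} with $q=\deg(m,b)$, $p=\deg b$, $q'=\deg(\partial_i(m,b))$, $p'=\deg(\partial_i b)$; its conclusion is precisely $x_1+x_2\le a_1+a_2$ and $x_2\le a_2$. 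Now for $m=k=2$ one has $\varphi_2(x_1,x_2)=(x_1+x_2,x_2)$, so this pair of inequalities says exactly that $(x_1+x_2,x_2)\le_{\text{lex}}(a_1+a_2,a_2)$, i.e., $(x_1,x_2)\in T^2_{(a_1,a_2)}$; and since $\le_{\text{lex}}$ is transitive, $(a_1,a_2)\in T^2_P$ forces $T^2_{(a_1,a_2)}\subseteq T^2_P$. Chaining these inclusions, $(x_1,x_2)\in T^2_{(a_1,a_2)}\subseteq T^2_P=p(P;2)$, so each $\partial_i\sigma$ is either $0$ (when degenerate, in the normalized complex) or a generator of $F_{p(P;2)}$; therefore $d\sigma=\sum_i(-1)^i\partial_i\sigma\in F_{p(P;2)}$, and since $\sigma$ was an arbitrary generator, $d(F_{p(P;2)})\subseteq F_{p(P;2)}$.

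The only genuinely delicate ingredient is the claim that face operators do not increase the degeneracy degree; everything else is bookkeeping with the definitions of twisted products and of $\le_{\text{lex}}$. I expect to spell that claim out carefully (or cite it), since it is exactly what makes the lexicographic downsets $p(P;2)=T^2_P$ — rather than the naive downsets $\{X\le P\}$ — the correct choice: a face can lower $\deg b$ while leaving $\deg(m,b)$ unchanged, thereby raising the first bidegree coordinate $x_1$, so stability under $d$ genuinely requires a downset defined through the lexicographic comparison of the pair $(\deg(m,b),\deg b)$.
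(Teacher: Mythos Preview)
Your proof is correct and follows essentially the same approach as the paper's: show that each face operator $\partial_i$ does not increase either $\deg b$ or $\deg(m,b)$, then invoke Remark~\ref{rem:ineq_degrees} to conclude that the bidegree of $\partial_i\sigma$ stays in $T^2_P$. You spell out a few points the paper leaves implicit---the monotonicity in the poset index, the fact that the twistings only affect the group components, and the transitivity argument showing $(x_1,x_2)\in T^2_{(a_1,a_2)}\subseteq T^2_P$---but the skeleton is identical.
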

%
%
\begin{proof}

Consider $E\= G \times_{\tau_0} (M \times_{\tau_1} B)$ and the associated chain complex $C_*(E) \= C_*(G \times_{\tau_0} (M \times_{\tau_1} B))$. We have to prove that the differential is compatible with the defined filtration, that is $d (F_{p(P;2)})\subseteq F_{p(P;2)}$, for all $P\in \Z^2$.



Recall that the differential map $d_n :C_n (E) \to C_{n-1}(E)$ is defined as $d_n = \sum_{i=0}^n (-1)^{i} \partial_i$, where $\partial_i$ are the face operators of~$E_n$ (see Section~\ref{sec:simplicial_sets_and_fibrations}). 
Let $\sigma\=(g,(m,b))\in E_n$ such that $\sigma \in F_{ p(P;2)}$, which by Definition~\ref{defn:2crpr-filtration} means $(\deg(m,b)-\deg b, \deg b) \in p(P;2)$.
Let us denote $p\= \deg b$, $q\= \deg (m,b)$, and $p_1 \= q-p$, $p_2 \= p$. 
It follows from the definitions that, for every face operator $\partial_i$, if we denote $(g',(m',b'))\= \partial_i (g,(m,b))$ we have $p'\= \deg b' \le p$ and $q'\= \deg (m',b') \le q$. Using Remark~\ref{rem:ineq_degrees}, we see that $x_1\= q'-p'$ and $x_2\= p'$ must satisfy $x_1 + x_2\le p_1 +p_2$ and $x_2\le p_2$. It can be easily shown 
that all the points $(x_1,x_2)$ with $x_1 + x_2 \le p_1 +p_2$ and $x_2\le p_2$ belong to the downset  $p(P;2)=T^2_{P}$ (we graphically represented this fact in Figure~\ref{fig:T2_Filtr_TwCar}).
For example, we can observe that the points $(x_1,x_2)$ satisfy
\begin{equation}
\label{eq:xpv}
(x_1, x_2) = (p_1, p_2) + \lambda_1 v_1 + \lambda_2 v_2, \quad \text{for some } \lambda_1,\lambda_2 \in \Z_{\ge 0},
\end{equation}
with $v_1 \= -e_1 = (-1,0)$ and $v_2 \= e_1 -e_2= (1,-1)$, and show that the translation of $p(P;2)$ by $v_1$ (resp. $v_2$) is contained in $p(P;2)$. The case of $v_1$ is trivial, since $p(P;2)$ is a downset; in the case of $v_2$ we have
\[
p(P;2) + v_2 = p(P +v_2;2) = s(P;2)  = p(P;2) \setminus \{ P\} . 
\]
In conclusion, we have proven that ${\partial_i\sigma}\in F_{p(P;2)}$, for all $0 \leq i \leq n$.
\end{proof}
\begin{figure}[ht]
\centering
\includegraphics[scale=0.7]{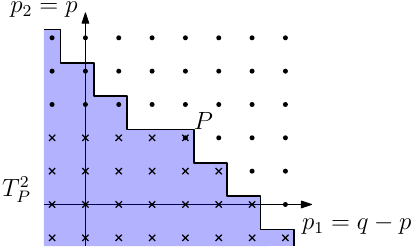}
\caption{Graphical representation of the proof of Proposition~\ref{prop:Filtr_TwCar}. Here we focus on the point $P\in T^2_{P}$. Starting from position $P$, the differential of $C_*(E)$ (see proof) can only reach the positions marked by the symbol {$\times$}. Recall that for a generator $(g,(m,b))$ of $C_n(E) \coloneqq C_n(G \times_{\tau_0} (M \times_{\tau_1} B))$ we have denoted $p\coloneqq \deg b$ and $q\coloneqq \deg (m,b)$.}
\label{fig:T2_Filtr_TwCar}
\end{figure}
%

Before showing the validity of the filtration we defined on the chain complex $C_* (G) \otimes_{t_0} (C_* (M) \otimes_{t_1} C_* (B))$, we state a useful result, proved in \cite[\S~8.3]{RS06}.
\begin{proposition} \label{prop:perturb_filtr}
Let $X,Y$ be simplicial sets and suppose we have a twisted Eilenberg--Zilber reduction $C_* (X\times_{\tau} Y) \rrdc C_* (X) \otimes_{t} C_*(Y)$. Denote $\delta$ the perturbation of the differential of $C_* (X\times_{\tau} Y)$, defined as in equation~(\ref{eq:pert_twist}), and denote $\delta'$ the induced perturbation of the differential of $C_* (X) \otimes_{t} C_*(Y)$. Then: 
\begin{enumerate}[(i)]
\item $\delta$ decreases (at least) by 1 the degeneracy degree of the second components (corresponding to simplices of $Y$), that is it sends a generator $(x,y)\in X_n \times Y_n$ with $\deg y =p$ to a linear combination of elements $(x_i,y_i)$, with $\deg y_i \le p-1$ for each $i$.
\item If we assume $Y$ to be 1-reduced, $\delta'$ decreases the dimension of the factor $C_*(Y)$ (at least) by 2, that is it sends $C_i(X) \otimes C_j (Y)$ to $\bigoplus_{k\ge 2}C_{i+k-1}(X) \otimes C_{j-k} (Y)$.
\end{enumerate}
\end{proposition}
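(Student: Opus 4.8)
The plan is to handle the two assertions separately: the statement about $\delta$ by a direct computation with the explicit formula \eqref{eq:pert_twist} and the simplicial and twisting-operator identities, and the statement about $\delta'$ by feeding the first into the Basic Perturbation Lemma and a filtration-chasing argument.

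\emph{The statement about $\delta$.} By \eqref{eq:pert_twist}, $\delta(x,y)$ is a difference of two elements of $C_*(X\times_\tau Y)$, both of which have $\partial_n y$ as their $Y$-component, where $n$ is the degree of $(x,y)$. So it suffices to bound $\deg(\partial_n y)$ in terms of $\deg y=p$, and I would split into two cases. If $y$ is degenerate "at the last spot", i.e.\ $y=\eta_{n-1}z$ for some $z\in Y_{n-1}$, then the last identity in Definition~\ref{def:Twisted_GB} (applied to the $(n-1)$-simplex $z$) gives $\tau(y)=\tau(\eta_{n-1}z)=e_{n-1}$, so the two terms of $\delta(x,y)$ coincide and $\delta(x,y)=0$; there is nothing to prove. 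Otherwise write $y$ in normal form $y=\eta_{i_k}\cdots\eta_{i_1}y'$ with $y'\in Y_p$ non-degenerate, $k=n-p$, and $n-1>i_k>\cdots>i_1\ge 0$ (the strict inequality $i_k<n-1$ is exactly the failure of the previous case). At each stage the simplicial identity $\partial_a\eta_b=\eta_b\partial_{a-1}$ applies, since the relevant indices satisfy $a>b+1$, so $\partial_n$ commutes past all the degeneracies: $\partial_n y=\eta_{i_k}\cdots\eta_{i_1}\partial_p y'$. As $\partial_p y'\in Y_{p-1}$, this gives $\deg(\partial_n y)\le p-1$, as claimed.

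\emph{The statement about $\delta'$.} By the Basic Perturbation Lemma (Theorem~\ref{thm:bpl}) applied to the Eilenberg--Zilber reduction $\rho=(f,g,h)$, the induced perturbation is $\delta'=\sum_{i\ge 0}(-1)^i f\,\delta\,(h\delta)^i\,g$. I would introduce the "$Y$-dimension filtration": on $C_*(X)\otimes C_*(Y)$ the subcomplex $F^q\=C_*(X)\otimes\bigl(\bigoplus_{q'\le q}C_{q'}(Y)\bigr)$, and on $C_*(X\times Y)$ the span of the pairs $(x,y)$ whose $Y$-component has degeneracy degree $\le q$ (a subcomplex, because faces never raise the degeneracy degree). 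From the defining formulas of Theorem~\ref{thm:EZ} one checks that $f$, $g$ and $h$ all respect these filtrations: $g$ applies degeneracies to the $Y$-factor; $f$ and $h$ apply faces (then, for $h$, degeneracies) to the $Y$-component. Meanwhile the first part of the proposition says $\delta$ strictly lowers the filtration by at least $1$. Hence $f\,\delta\,(h\delta)^i\,g$ lowers the $Y$-dimension by at least $i+1$, so every term with $i\ge 1$ already lowers it by at least $2$, and the only thing left is to show that the $i=0$ term $f\delta g$ lowers the $Y$-dimension by at least $2$ when $Y$ is $1$-reduced.

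This last point is the crux, and I expect it to be the main obstacle. Expanding $g(x\otimes y_j)$ as a sum over shuffles, applying $\delta$ (whose contributions vanish whenever the shuffle uses the top degeneracy index, by the $\tau(\eta_{\mathrm{top}})=e$ identity above), and then applying the Alexander--Whitney map, one obtains a sum of terms indexed by a splitting point; the terms that would land in $Y$-dimension $j-1$ must be shown to cancel. Here the hypothesis that $Y$ is $1$-reduced enters decisively: on the one hand $C_1(Y)=0$ kills outright every term whose $Y$-component has dimension $1$; on the other hand, for the surviving "top" term, the relevant coefficient in $C_*(X)$ is a telescoping product of values of $\tau$ on iterated faces of a $2$-simplex of $Y$, and since every $1$-simplex of $Y$ is the unique degenerate one (on which $\tau$ is the identity, again by Definition~\ref{def:Twisted_GB}), this product collapses to $e$ and the term vanishes. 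Granting this, $\delta'$ lowers the $Y$-dimension by at least $2$; the explicit shape $\bigoplus_{k\ge 2}C_{i+k-1}(X)\otimes C_{j-k}(Y)$ then follows since $\delta'$, being a perturbation of a degree $-1$ differential, has total degree $-1$. Everything except this $i=0$ bookkeeping is formal filtration-chasing.
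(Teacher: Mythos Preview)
The paper does not prove this proposition itself; it simply states it and cites \cite[\S~8.3]{RS06}. So there is no in-paper argument to compare against, and your proposal is essentially the standard proof one finds in that reference.

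Your argument for the first assertion is correct and cleanly done. For the second assertion, the filtration-chasing via the Basic Perturbation Lemma is the right mechanism, and your treatment of the summands with $i\ge 1$ is fine. The $i=0$ term $f\delta g$ is indeed where the $1$-reduced hypothesis is needed, and the idea of your sketch is the right one, with two small corrections. First, after the telescoping (repeated use of the identity $\partial_{n-1}(\tau b)=\tau(\partial_n b)^{-1}\cdot\tau(\partial_{n-1}b)$ together with the simplicial identities) the surviving $X$-coefficient is a product of values of $\tau$ on \emph{$1$-simplices} of $Y$, not $2$-simplices; since $Y$ is $1$-reduced every $1$-simplex is $\eta_0 *$, on which $\tau$ returns the identity, and the term vanishes as you say. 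Second, when $\dim x=i>0$ there are several shuffle terms contributing (those with $\beta_j=n-1$), and the cancellation requires tracking all of them rather than a single ``top'' term; the bookkeeping is heavier than your sketch indicates, though it does go through. The remark that $C_1(Y)=0$ is only directly relevant for the case $j=2$ and is not what drives the vanishing for $j\ge 3$.
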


\begin{proposition} \label{prop:F_tensor}
Definition~\ref{defn:2tnpr-filtration} yields a valid filtration of chain subcomplexes. 
\end{proposition}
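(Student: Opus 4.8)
The plan is to mirror the structure of the proof of Proposition~\ref{prop:Filtr_TwCar}, replacing the face operators of the twisted Cartesian product by the (perturbed) differential of the twisted tensor product, and invoking Proposition~\ref{prop:perturb_filtr} to control how the perturbations move the bidegree $(x_1,x_2)\=(j,k)$ attached to a generator $\sigma = g\otimes(m\otimes b)$ with $g\in G_i$, $m\in M_j$, $b\in B_k$. Concretely, I must show $d(F_{p(P;2)})\subseteq F_{p(P;2)}$ for every $P=(p_1,p_2)\in\Z^2$, where $d$ is the differential of $C_*\=C_*(G)\otimes_{t_0}(C_*(M)\otimes_{t_1}C_*(B))$. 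Since it suffices (as in Proposition~\ref{prop:Filtr_TwCar}) to check that $d$ never raises a generator of bidegree $(j,k)\in p(P;2)$ out of the downset $p(P;2)=T^2_P$, and since $T^2_P$ contains every point $(x_1,x_2)$ with $x_1+x_2\le p_1+p_2$ and $x_2\le p_2$ (the fact established graphically in Figure~\ref{fig:T2_Filtr_TwCar}), the whole argument reduces to: starting from bidegree $(j,k)$ with $j+k\le p_1+p_2$ and $k\le p_2$, every term of $d\sigma$ has bidegree $(x_1,x_2)$ with $x_1+x_2\le j+k$ and $x_2\le k$.

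The differential $d$ of $C_*$ decomposes as the untwisted tensor-product differential $d_\otimes$ of $C_*(G)\otimes(C_*(M)\otimes C_*(B))$ plus two perturbations: the perturbation $\delta_1'$ coming from the inner twisted Eilenberg--Zilber reduction $\TEZ_1$ (applied to the fibration $M\to N\to B$), and the perturbation $\delta_0'$ induced (via the Basic Perturbation Lemma) on $C_*(G)\otimes_{t_0}C_*(M\times_{\tau_1}B)$ by $\tau_0$ and then transported through $\id\otimes_{t_0}\TEZ_1$. First I would treat $d_\otimes$: it is the usual Koszul differential, so each of its summands has the form $\pm\,(\text{face of }g)\otimes(m\otimes b)$, $\pm\,g\otimes((\text{face of }m)\otimes b)$, or $\pm\,g\otimes(m\otimes(\text{face of }b))$; these respectively leave $(j,k)$ unchanged, decrease $j$ by $1$ (hence decrease $x_1+x_2$, keep $x_2$), or decrease $k$ by $1$ (decrease both $x_1+x_2$ and $x_2$). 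So $d_\otimes$ satisfies the required inequalities trivially. Next, by Proposition~\ref{prop:perturb_filtr} applied to the inner reduction with $Y=B$ (which is $1$-reduced by hypothesis), $\delta_1'$ sends $C_j(M)\otimes C_k(B)$ into $\bigoplus_{\ell\ge 2}C_{j+\ell-1}(M)\otimes C_{k-\ell}(B)$, i.e. it lowers $x_2=k$ by at least $2$ and raises $x_1=j$ by at least $1$; the net change in $x_1+x_2$ is $(\ell-1)-\ell=-1<0$, and $x_2$ strictly decreases, so again both inequalities are satisfied (with room to spare). Finally, for the outer perturbation $\delta_0'$ I would argue analogously: applying Proposition~\ref{prop:perturb_filtr} to the outer twisted Eilenberg--Zilber situation (with ambient simplicial set the $1$-reduced $N=M\times_{\tau_1}B$, so that the relevant $\deg$ controls the pair $(x_1,x_2)$ via Remark~\ref{rmk:degCartesian} and the bookkeeping of Remark~\ref{rem:ineq_degrees}), the perturbation lowers the ``$N$-degeneracy-degree'' by at least $2$; translating this through the identification $(x_1,x_2)=(\deg(m,b)-\deg b,\deg b)$ exactly as in the proof of Proposition~\ref{prop:Filtr_TwCar} shows $x_1+x_2$ does not increase and $x_2$ does not increase. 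Hence every summand of $d\sigma$ stays in $p(P;2)$, and $d(F_{p(P;2)})\subseteq F_{p(P;2)}$.

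The main obstacle I anticipate is the careful bookkeeping for the outer perturbation $\delta_0'$. Unlike the clean statement of Proposition~\ref{prop:perturb_filtr} — which is phrased for a single twisted Eilenberg--Zilber reduction $C_*(X\times_\tau Y)\rrdc C_*(X)\otimes_t C_*(Y)$ — the complex $C_* = C_*(G)\otimes_{t_0}(C_*(M)\otimes_{t_1}C_*(B))$ is obtained by a two-stage construction: first $\TEZ_0$ for $G\to E\to N$, then tensoring with $\id$ and re-perturbing via the BPL along $\id\otimes_{t_0}\TEZ_1$. I need to make sure the ``degeneracy-degree of the $N$-factor decreases by $\ge 2$'' statement survives the second stage, i.e. that after replacing $C_*(N)=C_*(M\times_{\tau_1}B)$ by $C_*(M)\otimes_{t_1}C_*(B)$ the induced outer perturbation still lowers the combined bidegree $(x_1,x_2)$ in the required way. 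The key point is that the BPL formula for the transported perturbation is built from $f_1$, $g_1$, $h_1$ of $\TEZ_1$ composed with the old perturbation, and both $f_1$ (Alexander--Whitney) and the old perturbation already respect the inequalities $x_1+x_2\le p_1+p_2$, $x_2\le p_2$ (the former by the same face-operator analysis as for $d_\otimes$, the latter by Proposition~\ref{prop:perturb_filtr} for the inner reduction), while $g_1$ (Eilenberg--MacLane) and $h_1$ (Shih) only add degeneracies and shuffle, which again cannot raise the combined bidegree. Assembling these observations — possibly phrased as a lemma ``the maps $f_1,g_1,h_1$ of $\TEZ_1$ are compatible with the $p(P;2)$-filtrations'' so the BPL-transported perturbation inherits compatibility — is the genuinely delicate part; everything else is the routine translation via Remark~\ref{rem:ineq_degrees} already used in Proposition~\ref{prop:Filtr_TwCar}.
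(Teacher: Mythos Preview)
Your approach is essentially the paper's: decompose the differential of $C_*(G)\otimes_{t_0}(C_*(M)\otimes_{t_1}C_*(B))$ into the untwisted tensor differential plus the inner and outer perturbations, and check each piece preserves $F_{p(P;2)}$. The treatment of $d_\otimes$ and of the inner perturbation (via Proposition~\ref{prop:perturb_filtr} with $Y=B$) coincides with the paper's.

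The one substantive difference concerns the outer perturbation. You aim to establish \emph{both} inequalities $x_1+x_2\le j+k$ and $x_2\le k$, which forces you into the BPL transport analysis you correctly flag as delicate. The paper avoids this entirely: it simply applies Proposition~\ref{prop:perturb_filtr} with $Y=N=M\times_{\tau_1}B$ (which is $1$-reduced) to conclude that the outer perturbation lowers the total degree $j+k$ of the $C_*(M)\otimes_{t_1}C_*(B)$ factor by at least $2$, and then uses the specific shape of $T^2_P$: for any $(q_1,q_2)\in T^2_P$, \emph{every} point $(x_1,x_2)$ with $x_1+x_2=q_1+q_2-k$, $k\ge 2$, lies again in $T^2_P$, with no constraint on $x_2$ whatsoever. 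So the second inequality is never needed for this piece, and the whole ``delicate part'' disappears. Relatedly, your framing ``starting from bidegree $(j,k)$ with $j+k\le p_1+p_2$ and $k\le p_2$'' describes only a proper subset of $T^2_P$ (points with $j+k<p_1+p_2$ and $k>p_2$ also belong to it); your argument still covers these once one realises the first inequality alone already suffices there.
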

\begin{proof}

We have to prove that the differential $d$ of the chain complex $C_* (G) \otimes_{t_0} (C_* (M) \otimes_{t_1} C_* (B))$ is compatible with the defined filtration, that is $d (F_{p(P;2)})\subseteq F_{p(P;2)}$, for all $P\in \Z^2$.



We denote by $p_1$ (dimension of the chain groups of $C_*(M)$) and $p_2$ (dimension of the chain groups of $C_*(B)$) the coordinates of the bidegree we used to define the filtration in Definition~\ref{defn:2tnpr-filtration}, and use the notation $D_* \= C_* (M) \otimes_{t_1} C_* (B)$. 

We can express the differential $d$ of $C_* (G) \otimes_{t_0} (C_* (M) \otimes_{t_1} C_* (B))$ as
\[ d = d^{\otimes}+d^2+d^3+\cdots ,
\]
where $d^{\otimes}$ denotes the differential of $C_*(G)\otimes D_*$ and the sum $d^2 +d^3 +\ldots$ represent its perturbation, with
\[ d^k_{p_0,j} : C_{p_0} (G) \otimes (C_* (M) \otimes C_* (B))_j \to C_{p_0 +k-1} (G) \otimes (C_* (M) \otimes C_* (B))_{j-k}
\]
for all $k\ge 2$, with $j =p_1 +p_2$. The fact that the perturbation concerns only indices $k\ge 2$ is a consequence of Proposition~\ref{prop:perturb_filtr} (ii), together with the assumption that $M$ and $B$ are $1$-reduced (in Section \ref{sec:efhm_for_serrespectralsystems} we present a detailed argument). As it can be easily shown using the definition of the downset $p(P;2)=T^2_{P}$, for each point $Q\=(q_1,q_2)\in p(P;2)$, all the points $(x_1,x_2)$ such that $x_1+x_2 =q_1+q_2-k$ (for $k\ge 2$) lie again in $p(P;2)$. That is, the perturbed part $d^2+d^3+\cdots$ of $d$ is compatible with the filtration we defined. 

We can then focus on $d^{\otimes} = d_{C(G)} \otimes \id_D \pm \id_{C(G)} \otimes d_D$, where the differential $d_D$ of $C_* (M) \otimes_{t_1} C_* (B)$
is given again by 
\[ d_D = \overline{d}^{\otimes} + \overline{d}^2 + \overline{d}^3 +\cdots ;
\]
$\overline{d}^{\otimes}$ is the differential of $C_* (M) \otimes C_* (B)$ and $\overline{d}^2 + \overline{d}^3 +\cdots$ is its perturbation, with
\[ \overline{d}^{\ell}_{p_1,p_2} : C_{p_1} (M) \otimes C_{p_2} (B) \to C_{p_1+\ell -1} (M) \otimes C_{p_2-\ell} .
\]
Similarly to before, it is easy to show that, for each point $Q\=(q_1,q_2)\in p(P;2)$, the points $(q_1-1,q_2),(q_1,q_2-1)$ and $(q_1+\ell -1,q_2-\ell)$, for each $\ell \ge 2$, lie again in $p(P;2)$. This completes the proof, whose idea is schematically represented in Figure~\ref{fig:T2_Filtr_Tensor}.
\end{proof}
\begin{figure}[ht]
\centering
\includegraphics[scale=0.7]{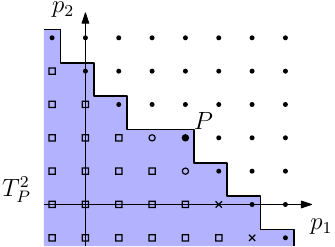}
\caption{Graphical representation of the proof of Proposition~\ref{prop:F_tensor}. Here we focus on the point $P\in T^2_{P}$. Starting from position $P$, the differential $d_D$ (see proof) can only reach the positions marked by {$\circ$} (differential $\bar{d}^{\otimes}$ of $C_*(M) \otimes C_*(B)$) and by {$\times$} (perturbations $\bar{d}^2, \bar{d}^3, \ldots$). As an effect of the perturbations $d^k$ of the differential $d$ (see proof), one possibly reaches the positions marked by {$\Box$}.}
\label{fig:T2_Filtr_Tensor}
\end{figure}
%

\section{Effective homology for computing higher Leray--Serre spectral sequences}
\label{sec:efhm_for_serrespectralsystems}

 The filtrations introduced in Definitions~\ref{defn:2crpr-filtration} and~\ref{defn:2tnpr-filtration} for the chain complexes $C_\ast(G \times_{\tau_0} (M \times_{\tau_1} B))$ and $C_* (G) \otimes_{t_0} (C_* (M) \otimes_{t_1} C_* (B))$ produce two higher spectral sequences that can be directly computed using the programs presented in Section~\ref{sec:programs} when the chain complexes are of finite type (which is true if and only if the three simplicial sets $G$, $M$ and $B$ are of finite type). 
However, when any of the simplicial sets $G$, $M$ and $B$ is not of finite type, the higher spectral sequences associated with the filtrations of $C_\ast(G \times_{\tau_0} (M \times_{\tau_1} B))$ and $C_* (G) \otimes_{t_0} (C_* (M) \otimes_{t_1} C_* (B))$ cannot be directly determined; in order to compute them, we have to resort to the effective homology technique, introduced in Section~\ref{sec:efhm}. In this section we detail how effective homology can be used to determine the higher spectral sequences also in the infinite type case.  
As in Section~\ref{sec:serre_spectral_systems}, for expository purposes we consider the case of towers of two fibrations; all the results we present will be generalized for $m$ fibrations in Section~\ref{sec:m_fibrations}.

Suppose that the simplicial sets $G$, $M$ and $B$ in the tower of two fibrations~(\ref{eq:tower-of-2-fibrations}) have effective homology, that is to say, there exist equivalences
\begin{equation}
\label{eq:BGequiv_gen_m2}
\begin{aligned}
C_* (G) \lrdc &\hat{C} G_* \rrdc  DG_* , \\
C_* (M) \lrdc &\hat{C} M_* \rrdc  DM_* , \\
C_* (B) \lrdc &\hat{C} B_* \rrdc  DB_* ,
\end{aligned}
\end{equation}
where $DG_*$, $DM_*$ and $DB_*$ are effective chain complexes.

Taking into account the tensor product of reductions and applying the Trivial Perturbation Lemma and the Basic Perturbation Lemma (Theorems~\ref{thm:tpl} and~\ref{thm:bpl} respectively), one obtains an equivalence given by the two reductions
\begin{align*} \rho'_1 : \quad & \hat{C} G_* \otimes_{t_0} (\hat{C} M_* \otimes_{t_1}\hat{C} B_*) \rrdc C_* (G) \otimes_{t_0} (C_* (M) \otimes_{t_1} C_* (B)) , \\
\rho'_2 : \quad & \hat{C} G_* \otimes_{t_0} (\hat{C} M_* \otimes_{t_1}\hat{C} B_*) \rrdc DG_* \otimes_{t_0} (DM_* \otimes_{t_1} DB_*) .
\end{align*}
Combining it with~(\ref{eq:2crpr-rdct}) we obtain the following diagram of reductions
connecting the chain complex $C_\ast(E)\= C_\ast(G \times_{\tau_0} (M \times_{\tau_1} B))$ to the effective chain complex $DG_* \otimes_{t_0} (DM_* \otimes_{t_1} DB_*)$:

\footnotesize
\begin{equation}
\label{eq:2crpr-efhm}
\xymatrix @R=5.0mm @C=0.0mm{
  C_\ast(G \times_{\tau_0} (M \times_{\tau_1} B)) \ar @{=>>}_{\TEZ_0} [d]  &  &  \\
  C_*(G) \otimes_{t_0} C_* (M \times_{\tau_1} B) \ar @{=>>}_{\id \otimes_{t_0} \TEZ_1} [d]  &  \hat{C} G_* \otimes_{t_0} (\hat{C} M_* \otimes_{t_1}\hat{C} B_*)  \ar @{=>>}_{\rho'_1} [ld] \ar @{=>>}^{\rho'_2} [rd] &  \\
  C_* (G) \otimes_{t_0} (C_* (M) \otimes_{t_1} C_* (B)) & & DG_* \otimes_{t_0} (DM_* \otimes_{t_1} DB_*) . 
}
\end{equation}
\normalsize

The reductions of this diagram allow us to use the effective homology technique to carry out computations on the filtered chain complex $C_\ast(G \times_{\tau_0} (M \times_{\tau_1} B))$ using the effective chain complex $DG_* \otimes_{t_0} (DM_* \otimes_{t_1} DB_*)$, on which an appropriate filtration will be defined. We want to show that, from the 2-page of the secondary connection, the effective homology method gives in fact correct results on the terms of the higher spectral sequence defined on $C_\ast(G \times_{\tau_0} (M \times_{\tau_1} B))$. For this purpose, we start from the top of the diagram and study the behavior of all the involved reductions. We will also define filtrations $\{ F_{p(P;2)} \}$ on all the involved chain complexes, recalling that Definitions~\ref{defn:2crpr-filtration} and~\ref{defn:2tnpr-filtration} already provide us with filtrations of $C_\ast(G \times_{\tau_0} (M \times_{\tau_1} B))$ and $C_* (G) \otimes_{t_0} (C_* (M) \otimes_{t_1} C_* (B))$. 
Remember that, as stated in  Section~\ref{sec:pert_EZ}, we always assume all the chain complexes associated with simplicial sets to be \emph{normalized}. Let us also recall that the simplicial sets $M$ and $B$ are assumed to be $1$-reduced, which clearly implies that also $M\times_{\tau_1} B$ is 1-reduced.

Let us start from the first reduction of the diagram, 
\[ \TEZ_0 : C_\ast(G \times_{\tau_0} (M \times_{\tau_1} B)) \rrdc  C_*(G) \otimes_{t_0} C_* (M \times_{\tau_1} B) ,
\]
which is a twisted Eilenberg--Zilber reduction (see Theorem~\ref{thm:twistedEZ}). 
We denote $\tilde{\delta}_0$ the perturbation induced via the twisted Eilenberg--Zilber theorem, so that we can express the differential of the chain complex $C_*(G) \otimes_{t_0} C_* (M \times_{\tau_1} B)$ as the differential of $C_*(G) \otimes C_* (M \times_{\tau_1} B)$ plus the perturbation $\tilde{\delta}_0$.
To study how it behaves with respect to filtrations, we first need to make explicit the filtration we consider on $C_*(G) \otimes_{t_0} C_* (M \times_{\tau_1} B)$.

\begin{definition}
\label{defn:crpr-tnpr-filtration}
Consider the chain complex $C_*(G) \otimes_{t_0} C_* (M \times_{\tau_1} B)$ and denote its generators by $\sigma\= g \otimes (m, b)$. Let $P\in \Z^2$. For the downset $p(P;2)\=T^2_{P}$ we define $F_{p(P;2)}$ as the chain subcomplex of $C_*(G) \otimes_{t_0} C_* (M \times_{\tau_1} B)$ generated by the elements $\sigma\= g \otimes (m, b)$ such that $(x_1,x_2)\=(\deg(m,b)-\deg b, \deg b) \in p(P;2)$.
\end{definition}

Notice that, since we are considering normalized chain complexes, saying $\deg (m,b)=q$ is equivalent to $(m,b) \in C_{q}(M \times_{\tau_1} B)$.

Once again, as described in Remark \ref{rem:filtr_def}, in Definition \ref{defn:crpr-tnpr-filtration} we introduce a grading over $\Z^2$, given by $(x_1,x_2)\=(\deg(m,b)-\deg b, \deg b)$, on the generators of $C_*(G) \otimes_{t_0} C_* (M \times_{\tau_1} B)$, and we consider as generators of $F_{p(P;2)}$ only those satisfying $(x_1,x_2)\in p(P;2)$.

The corresponding definitions of the filtrations introduced in Definitions~\ref{defn:2crpr-filtration}, \ref{defn:2tnpr-filtration} and~\ref{defn:crpr-tnpr-filtration} for towers of more than 2 fibrations are introduced in Definition~\ref{def:filtr_m_fibr}.

\begin{proposition}
Definition~\ref{defn:crpr-tnpr-filtration} yields a valid filtration of chain subcomplexes. 
\end{proposition}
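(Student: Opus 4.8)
The plan is to imitate the proof of Proposition~\ref{prop:F_tensor}, decomposing the differential of $C_*(G) \otimes_{t_0} C_* (M \times_{\tau_1} B)$ into its non-perturbed part and the perturbation $\tilde{\delta}_0$, and checking that each summand respects the filtration $\{F_{p(P;2)}\}$ of Definition~\ref{defn:crpr-tnpr-filtration}. Concretely, writing $\sigma = g \otimes (m,b)$ with $g\in G_{x_0}$ and $(m,b)\in C_q(M\times_{\tau_1} B)$ of degeneracy bidegree $(x_1,x_2) = (\deg(m,b) - \deg b, \deg b) = (q-\deg b, \deg b)$, the differential is $d = d^\otimes + \tilde{\delta}_0$, where $d^\otimes = d_{C(G)} \otimes \id \pm \id \otimes d_{C(M\times_{\tau_1} B)}$.

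For the non-perturbed part $d^\otimes$, the first summand $d_{C(G)}\otimes\id$ does not touch the factor $C_*(M\times_{\tau_1} B)$, hence does not change $(x_1,x_2)$. For the second summand $\pm\id\otimes d_{C(M\times_{\tau_1} B)}$, I would invoke exactly the argument of Proposition~\ref{prop:Filtr_TwCar}: the differential of the twisted Cartesian product $M\times_{\tau_1} B$ is an alternating sum of faces $\partial_i$, and for $(m',b') = \partial_i(m,b)$ one has $\deg b' \le \deg b$ and $\deg(m',b')\le \deg(m,b)$, so with $p' = \deg b'$, $q' = \deg(m',b')$, Remark~\ref{rem:ineq_degrees} (applied with $p_1 = q-p$, $p_2 = p = \deg b$, $x_1 = q'-p'$, $x_2 = p'$) gives $x_1+x_2 \le p_1+p_2$ and $x_2\le p_2$; as noted in the proof of Proposition~\ref{prop:Filtr_TwCar}, all such $(x_1,x_2)$ lie in $p(P;2) = T^2_P$. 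For the perturbation $\tilde{\delta}_0$, I would appeal to Proposition~\ref{prop:perturb_filtr} applied to the twisted Eilenberg--Zilber reduction $\TEZ_0$ with $X = G$ and $Y = M\times_{\tau_1} B$: since $Y$ is $1$-reduced (as $M$ and $B$ are), $\tilde{\delta}_0$ sends $C_i(G)\otimes C_j(M\times_{\tau_1} B)$ into $\bigoplus_{k\ge 2} C_{i+k-1}(G)\otimes C_{j-k}(M\times_{\tau_1} B)$. Here I should be careful: Proposition~\ref{prop:perturb_filtr} controls the total dimension $j$ of the $Y$-factor, not directly the bidegree $(x_1,x_2)$ I filter by. But decreasing the total dimension $j = \deg(m,b)$ by $k\ge 2$ while leaving $g$ free to grow means the new generators $g'\otimes(m',b')$ have $\deg(m',b') \le j - k$, hence $x_1' + x_2' = \deg(m',b') \le (x_1 + x_2) - k < x_1 + x_2$, and as in Proposition~\ref{prop:F_tensor} every point with strictly smaller coordinate-sum than a point of $T^2_P$ still lies in $T^2_P$; the $x_2'$ coordinate can only shrink or stay bounded by the original since faces cannot increase degeneracy degree, so $x_2'\le x_2\le p_2$ holds too. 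Putting these together gives $d(F_{p(P;2)})\subseteq F_{p(P;2)}$.

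The main obstacle I anticipate is the bookkeeping in the perturbation step: Proposition~\ref{prop:perturb_filtr} is phrased in terms of the dimension of the $Y$-factor, whereas the filtration of Definition~\ref{defn:crpr-tnpr-filtration} is defined via the pair of degeneracy degrees $(\deg(m,b) - \deg b, \deg b)$, so I need to translate the ``dimension drops by $\ge 2$'' statement into a statement about this bidegree living in $T^2_P$. The key observation that makes this go through is that $x_1 + x_2 = \deg(m,b)$ is precisely the dimension of the $M\times_{\tau_1} B$-factor, so controlling that dimension controls the coordinate-sum $x_1+x_2$, and $T^2_P$ is a downset closed under decreasing this sum (while also closed under decreasing $x_2$); this is the same phenomenon already exploited in Proposition~\ref{prop:F_tensor}. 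Once this translation is made explicit, the proof is a routine combination of the arguments already given for Propositions~\ref{prop:Filtr_TwCar} and~\ref{prop:F_tensor}, and I would present it briefly, referring back to those two proofs and to Figure~\ref{fig:T2_Filtr_TwCar} rather than redrawing the picture.
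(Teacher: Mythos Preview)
Your proof is correct and follows the same strategy as the paper's: decompose the differential into the tensor differential $d^\otimes$ and the perturbation $\tilde{\delta}_0$, argue that neither increases the degrees $p=\deg b$ and $q=\deg(m,b)$, and then use Remark~\ref{rem:ineq_degrees} together with the shape of $T^2_P$ (the paper compresses this into one sentence and refers to Figure~\ref{fig:T2_Filtr_TwCar}). One small remark: your additional claim that $x_2'\le x_2$ after applying $\tilde{\delta}_0$ is actually superfluous---the strict drop $x_1'+x_2' \le (x_1+x_2)-2$ coming from Proposition~\ref{prop:perturb_filtr} already lands you in $T^2_P$ regardless of $x_2'$---which is fortunate, since the justification ``faces cannot increase degeneracy degree'' is imprecise for $\tilde{\delta}_0$, a composite built from the Alexander--Whitney, Eilenberg--MacLane and Shih operators rather than a plain sum of face maps.
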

\begin{proof} 
We have to prove that the differential of $C_*(G) \otimes_{t_0} C_* (M \times_{\tau_1} B)$ is compatible with the defined filtration.
Let $P\in \Z^2$ and consider the downset $p(P;2)\= T^2_P$. Proceeding in a similar fashion to the proof of  Proposition~\ref{prop:Filtr_TwCar}, we start by observing that the differential of $C_*(G) \otimes_{t_0} C_* (M \times_{\tau_1} B)$ does not increase the degeneracy degrees $p\= \deg b$ and $q\= \deg (m,b)$, as one can easily see considering the behavior of the differential of $C_*(G) \otimes C_* (M \times_{\tau_1} B)$ and of the perturbation  $\tilde{\delta}_0$, and recalling Proposition~\ref{prop:perturb_filtr}.

Since the bidegree we are considering is defined by $(q-p,p)$, by Remark~\ref{rem:ineq_degrees} the differential of $C_*(G) \otimes_{t_0} C_* (M \times_{\tau_1} B)$ sends a generator of bidegree $Q\=(q_1,q_2)\in p(P;2)$ to a linear combination of generators whose bidegrees $(x_1,x_2)$ satisfy $x_1+x_2\le q_1 +q_2$ and $x_2\le q_2$. Since all these points $(x_1,x_2)$ still belong to $p(P;2)$, we have shown that the differential is compatible with the defined filtration. Notice that Figure~\ref{fig:T2_Filtr_TwCar}, which is associated with Proposition \ref{prop:Filtr_TwCar}, depicts also the situation of this proof.
\end{proof}

We can now describe the behavior of the reduction $\TEZ_0$ with respect to the filtrations we are considering.

\begin{proposition}
\label{prop:rdct1-compatible}
The reduction $\TEZ_0$ in~(\ref{eq:2crpr-efhm}) is compatible with the filtrations defined on the chain complexes $C_\ast(G \times_{\tau_0} (M \times_{\tau_1} B))$ and $C_*(G) \otimes_{t_0} C_* (M \times_{\tau_1} B)$.
\end{proposition}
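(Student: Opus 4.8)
The plan is to unwind $\TEZ_0$ into the pieces it is assembled from. By the twisted Eilenberg--Zilber theorem (Theorem~\ref{thm:twistedEZ}), the reduction $\TEZ_0=(f',g',h')$ is obtained by applying the Basic Perturbation Lemma (Theorem~\ref{thm:bpl}) to the ordinary Eilenberg--Zilber reduction $\rho=(f,g,h)\colon C_*(G\times N)\rrdc C_*(G)\otimes C_*(N)$ of Theorem~\ref{thm:EZ}, where $N\= M\times_{\tau_1}B$ and the perturbation is the map $\delta_0$ given by~(\ref{eq:pert_twist}) for the fibration $G\to E\to N$. The BPL formulas then read $f'=f\psi$, $g'=\varphi g$ and $h'=\varphi h=h\psi$, with $\varphi=\sum_{i\ge 0}(-1)^i(h\delta_0)^i$ and $\psi=\sum_{i\ge 0}(-1)^i(\delta_0 h)^i$. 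Since a composite, and a $\Z$-linear combination, of maps that carry every $F_{p(P;2)}$ into $F_{p(P;2)}$ is again such a map, it suffices to show that each of the four maps $f$, $g$, $h$, $\delta_0$ preserves the filtrations $\{F_{p(P;2)}\}_{P\in\Z^2}$ on the relevant complexes.

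To do this efficiently I would first isolate two general facts. On $C_*(G\times_{\tau_0}N)$ and on $C_*(G)\otimes_{t_0}C_*(N)$ the filtration index $(\deg(m,b)-\deg b,\deg b)$ of a generator depends only on the degeneracy degree of its $N$-component and on that of the $B$-component inside it, never on the $G$-factor; moreover, in $N=M\times_{\tau_1}B$ the degeneracy operators act componentwise and the face operators leave the $B$-component equal to $\partial_i b$. Secondly --- as already used in the proof of Proposition~\ref{prop:Filtr_TwCar} and behind Remark~\ref{rem:ineq_degrees} --- the downsets $T^2_Q$ are totally ordered by inclusion, so that a generator with filtration index $(q_1,q_2)$ lying in $F_{p(P;2)}$ forces every generator whose filtration index $(x_1,x_2)$ satisfies $x_1+x_2\le q_1+q_2$ and $x_2\le q_2$ to lie in $F_{p(P;2)}$ as well. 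Thus the whole matter reduces to the elementary facts that a face operator never increases, and a degeneracy operator preserves, the degeneracy degree.

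Granting this, the four checks become short. The Eilenberg--MacLane (shuffle) map $g$ changes the $N$-factor (and the $B$-component in it) only by degeneracy operators, so it preserves the filtration index exactly. The Alexander--Whitney map $f$ changes the $N$-factor only by face operators, hence weakly decreases both degeneracy degrees, so it respects the filtration by the lattice fact above. The Shih map $h$ changes the $N$-factor by faces followed by degeneracies, with the same conclusion. Finally, reading~(\ref{eq:pert_twist}), $\delta_0(g,(m,b))$ is a combination of terms whose $N$-component is $\partial_n(m,b)$ --- whose $B$-component is $\partial_n b$ --- the only other effect being a multiplication by $\tau_0(m,b)$ in the $G$-factor, which is irrelevant to the filtration; so $\delta_0$ too weakly decreases the two degeneracy degrees and respects the filtration. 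Assembling: $\varphi$ and $\psi$ preserve the filtrations, hence so do $f'=f\psi$, $g'=\varphi g$ and $h'=\varphi h$, which is exactly the compatibility of $\TEZ_0$ with the filtrations on $C_*(G\times_{\tau_0}(M\times_{\tau_1}B))$ and $C_*(G)\otimes_{t_0}C_*(M\times_{\tau_1}B)$.

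The step I expect to be the main obstacle is the bookkeeping in the second paragraph: one must verify that the coarse monotonicity ``faces weakly decrease, degeneracies preserve the degeneracy degree'' really translates into the two inequalities $x_1+x_2\le q_1+q_2$ and $x_2\le q_2$ for the filtration coordinates $(\deg(m,b)-\deg b,\ \deg b)$, and that neither twisting operator interferes --- $\tau_0$ only acting on the $G$-component and $\tau_1$ leaving the $B$-component intact. Once these estimates are pinned down, the passage through the Basic Perturbation Lemma is purely formal.
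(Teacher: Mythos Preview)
Your proof is correct and follows essentially the same approach as the paper: both arguments reduce to the observation that the maps of $\TEZ_0$ do not increase the two degeneracy degrees $p=\deg b$ and $q=\deg(m,b)$, and then invoke Remark~\ref{rem:ineq_degrees} exactly as in the proof of Proposition~\ref{prop:Filtr_TwCar}. The paper's proof is more compressed---it simply points to the explicit formulas of Theorems~\ref{thm:EZ} and~\ref{thm:twistedEZ} and asserts this monotonicity---whereas you unpack the BPL construction and verify the claim separately for $f$, $g$, $h$ and $\delta_0$ before reassembling via $\varphi$ and $\psi$; but the content is the same.
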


\begin{proof}
All the maps of the reduction $\TEZ_0 \=(f_{\TEZ_0},g_{\TEZ_0},h_{\TEZ_0})$, as it can be seen from their explicit definition (see Theorems~\ref{thm:EZ} and~\ref{thm:twistedEZ}), do not increase the degeneracy degrees $p\= \deg b$ and $q\= \deg (m,b)$. Therefore, since we use the bidegree given by $(q-p,p)$ to define the filtrations, we can again use Remark~\ref{rem:ineq_degrees} and the argument of the previous proof to conclude that $f_{\TEZ_0},g_{\TEZ_0},h_{\TEZ_0}$ are compatible with the considered filtrations. 
\end{proof}

Let us now turn our attention to the reduction
\[ \id \otimes_{t_0} \TEZ_1 : C_*(G) \otimes_{t_0} C_* (M \times_{\tau_1} B) \rrdc  C_*(G) \otimes_{t_0} ( C_* (M) \otimes_{t_1} C_* (B)) .
\]
The starting point of this reduction is the Eilenberg--Zilber reduction
\[ \TEZ_1 : C_* (M \times_{\tau_1} B) \rrdc   C_* (M) \otimes_{t_1} C_* (B) .
\]
Considering its tensor product with the trivial reduction of $C_* (G)$ we obtain
\[ \id \otimes \TEZ_1 : C_*(G) \otimes C_* (M \times_{\tau_1} B) \rrdc  C_*(G) \otimes ( C_* (M) \otimes_{t_1} C_* (B)) .
\]
Then, we introduce the perturbation $\tilde{\delta}_0$, which added to the differential of $C_*(G) \otimes C_* (M \times_{\tau_1} B)$ gives the differential of $C_*(G) \otimes_{t_0} C_* (M \times_{\tau_1} B)$, and apply the BPL to obtain the reduction $\id \otimes_{t_0} \TEZ_1$. Let us denote $\tilde{\delta}'_0$ the induced perturbation defining the differential of $C_*(G) \otimes_{t_0} ( C_* (M) \otimes_{t_1} C_* (B))$.

In order to apply the BPL, we have to make sure that the nilpotency condition is satisfied, as stated in Theorem~\ref{thm:bpl}. In this case, it is sufficient to observe that the composition $h_{\id \otimes \TEZ_1} \tilde{\delta}_0$ strictly decreases the degree $q\= \deg(m,b)$, since $h_{\id \otimes \TEZ_1}$ increases $q$ by $1$ (Proposition~\ref{prop:red_tensor}) and $\tilde{\delta}_0$ decreases $q$ at least by 2 (Proposition~\ref{prop:perturb_filtr} (ii)).

\begin{remark}
\label{rmk:hyp1reduced}
Let us clarify the role of the 1-reducedness assumption in (ii) of Proposition  \ref{prop:perturb_filtr}, which is used to prove the validity of some of the generalized filtrations of chain complexes we introduce (for example, we have used this assumption in Proposition \ref{prop:F_tensor}) and, even more importantly, as a sufficient condition for the nilpotency hypothesis of the BPL, to ensure that some reductions can be correctly defined. We have just used this argument for the reduction $\id \otimes_{t_0} \TEZ_1$ of (\ref{eq:2crpr-efhm}), and we will apply it again in this section for the reduction $\rho'_2$.
The 1-reducedness assumption can be relaxed by supposing that an explicit algebraic proof of the simply connectedness of the space is available, which guarantees that the thesis of Proposition  \ref{prop:perturb_filtr} (ii) holds. In Remark \ref{rmk:hyp1reduced_m} we provide further details. Note that, from a computational point of view, the 1-reducedness assumption has clear advantages, since it  can be easily verified. 
\end{remark}

We can now study the behavior of the reduction $\id \otimes_{t_0} \TEZ_1$ with respect to the filtrations we defined on $C_*(G) \otimes_{t_0} C_* (M \times_{\tau_1} B)$ and $C_*(G) \otimes_{t_0} ( C_* (M) \otimes_{t_1} C_* (B))$.

\begin{proposition}
\label{prop:rdct2-compatible}
Consider the reduction $\id \otimes_{t_0} \TEZ_1$ in~(\ref{eq:2crpr-efhm}) and the filtrations defined on the chain complexes $C_*(G) \otimes_{t_0} C_* (M \times_{\tau_1} B)$ and $C_* (G) \otimes_{t_0} (C_* (M) \otimes_{t_1} C_* (B))$. The maps $f_{\id \otimes_{t_0} \TEZ_1}$ and $g_{\id \otimes_{t_0} \TEZ_1}$ of the reduction are compatible with the filtrations, while $h_{\id \otimes_{t_0} \TEZ_1} (F_{p(P;2)}) \subseteq F_{p(P+(1,0);2)}$, for each $P\in \Z^2$.
\end{proposition}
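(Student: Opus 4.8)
Here is a plan for proving Proposition~\ref{prop:rdct2-compatible}.

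The reduction $\id\otimes_{t_0}\TEZ_1$ was built, as recalled just above, by applying the Basic Perturbation Lemma (Theorem~\ref{thm:bpl}) with perturbation $\tilde{\delta}_0$ to the reduction $\id\otimes\TEZ_1$, which is the tensor product (Proposition~\ref{prop:red_tensor}) of the trivial reduction of $C_*(G)$ with $\TEZ_1=(f_1,g_1,h_1)$; since the homotopy of the trivial reduction vanishes, the components of $\id\otimes\TEZ_1$ are $\id\otimes f_1$, $\id\otimes g_1$ and $h_0\=\id\otimes h_1$, so the Basic Perturbation Lemma gives
\[ f_{\id\otimes_{t_0}\TEZ_1}=(\id\otimes f_1)\,\psi,\qquad g_{\id\otimes_{t_0}\TEZ_1}=\varphi\,(\id\otimes g_1),\qquad h_{\id\otimes_{t_0}\TEZ_1}=h_0\,\psi,
\]
with $\varphi\=\sum_{i\ge 0}(-1)^i(h_0\tilde{\delta}_0)^i$ and $\psi\=\sum_{i\ge 0}(-1)^i(\tilde{\delta}_0 h_0)^i$. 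Recall that a generator $g\otimes(m,b)$ of $C_*(G)\otimes_{t_0}C_*(M\times_{\tau_1}B)$ has bidegree $(x_1,x_2)=(\deg(m,b)-\deg b,\deg b)$, so $x_1+x_2=\deg(m,b)$ is the dimension of the $C_*(M\times_{\tau_1}B)$-factor (generators being non-degenerate), whereas a generator $g\otimes(m\otimes b)$ of $C_*(G)\otimes_{t_0}(C_*(M)\otimes_{t_1}C_*(B))$ has bidegree $(\dim m,\dim b)$, whose sum is the dimension of the $(C_*(M)\otimes_{t_1}C_*(B))$-factor. The plan is to record how each of $\id\otimes f_1$, $\id\otimes g_1$, $h_0$ and $\tilde{\delta}_0$ moves this bidegree, and then compose.

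Two elementary facts about the downsets $T^2_P$ will be used. First, as noted in the proof of Proposition~\ref{prop:Filtr_TwCar}, every $(y_1,y_2)$ with $y_1+y_2\le P_1+P_2$ and $y_2\le P_2$ lies in $T^2_{(P_1,P_2)}=p((P_1,P_2);2)$. Second, its shifted variant: if $(x_1,x_2)\in T^2_{(P_1,P_2)}$ then every $(y_1,y_2)$ with $y_1+y_2\le x_1+x_2+1$ and $y_2\le x_2$ lies in $T^2_{(P_1+1,P_2)}$, because $x_1+x_2\le P_1+P_2$ and, in the boundary case $y_1+y_2=P_1+P_2+1$, one has $x_1+x_2=P_1+P_2$ hence $y_2\le x_2\le P_2$. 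The three building-block facts I would establish are: (i) $\id\otimes f_1$ and $\id\otimes g_1$ preserve $x_1+x_2$ and do not increase the second bidegree coordinate; (ii) $h_0=\id\otimes h_1$ raises $x_1+x_2$ by exactly one and does not increase the second coordinate; (iii) $\tilde{\delta}_0$ lowers $x_1+x_2$ by at least two and does not increase the second coordinate. The statements about $x_1+x_2$ follow at once from the homological degrees of $f_1,g_1$ (degree $0$) and $h_1$ (degree $+1$) — all of which leave the $C_*(G)$-factor untouched — and from Proposition~\ref{prop:perturb_filtr}, which for the fibration $G\to E\to N$ with $N=M\times_{\tau_1}B$ ($1$-reduced) says that $\tilde{\delta}_0$ drops the dimension of the $C_*(M\times_{\tau_1}B)$-factor by at least two. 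The statements about the second coordinate follow from the explicit Alexander--Whitney, Eilenberg--MacLane and Shih formulas underlying $\TEZ_1$ (Theorem~\ref{thm:EZ}) — or from unwinding $\TEZ_1$ itself as the plain Eilenberg--Zilber reduction perturbed via the Basic Perturbation Lemma and applying Proposition~\ref{prop:perturb_filtr} once more — together with the fact that faces and degeneracies never raise the degeneracy degree of a simplex, which forces the $B$-component of every summand in the image to have degeneracy degree at most $\deg b$; for $\tilde{\delta}_0$, which is a summand of the differential of $C_*(G)\otimes_{t_0}C_*(M\times_{\tau_1}B)$ and hence, by the preceding proposition, does not raise $\deg b$, the same conclusion holds.

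Granting (i)--(iii), the assembly is routine. Every building block leaves the second bidegree coordinate non-increasing, so both $\tilde{\delta}_0 h_0$ and $h_0\tilde{\delta}_0$ leave it non-increasing while lowering $x_1+x_2$ by at least one; with the first $T^2$-fact this shows that each power $(h_0\tilde{\delta}_0)^i$ and $(\tilde{\delta}_0 h_0)^i$, and hence $\varphi$ and $\psi$, send $F_{p(P;2)}$ into itself for every $P\in\Z^2$. Therefore $f_{\id\otimes_{t_0}\TEZ_1}=(\id\otimes f_1)\psi$ and $g_{\id\otimes_{t_0}\TEZ_1}=\varphi(\id\otimes g_1)$ are compositions of maps each sending $F_{p(P;2)}$ into $F_{p(P;2)}$, so they are compatible with the filtrations. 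Finally $h_{\id\otimes_{t_0}\TEZ_1}=h_0\psi$ first applies $\psi$, which keeps us inside $F_{p(P;2)}$, and then $h_0=\id\otimes h_1$, which by (ii) and the shifted $T^2$-fact maps $F_{p(P;2)}$ into $F_{p(P+(1,0);2)}$, giving $h_{\id\otimes_{t_0}\TEZ_1}(F_{p(P;2)})\subseteq F_{p(P+(1,0);2)}$ as required.

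The genuine work sits in the three building-block facts, and the delicate one is the behaviour of the twisted Eilenberg--Zilber maps $f_1,g_1,h_1$ on the second bidegree coordinate: this cannot be read off from a black-box use of Theorem~\ref{thm:twistedEZ}, and needs either the explicit maps of Theorem~\ref{thm:EZ} plus the fact that faces and degeneracies never raise the degeneracy degree, or a second application of the Basic Perturbation Lemma unwinding $\TEZ_1$. The bookkeeping for $\tilde{\delta}_0$ is a minor point, settled by reusing the $\deg b$-monotonicity already established for the differential of $C_*(G)\otimes_{t_0}C_*(M\times_{\tau_1}B)$.
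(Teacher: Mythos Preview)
Your proof is correct and follows essentially the same approach as the paper: both track the behavior of the tensored Eilenberg--Zilber maps and the perturbation $\tilde{\delta}_0$ with respect to the two degrees $p=\deg b$ and $q=\deg(m,b)=x_1+x_2$, then assemble via the BPL formulas and the shape of the downsets $T^2_P$. Your presentation is somewhat more explicit---isolating the three ``building blocks'' and spelling out the shifted $T^2$-fact---but the underlying argument is identical to the paper's.
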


\begin{proof}
The proof mainly consists in understanding the behavior of the maps of the reduction with respect to the gradings over $\Z^2$ we used to define the filtrations on $C_*(G) \otimes_{t_0} C_* (M \times_{\tau_1} B)$ and $C_* (G) \otimes_{t_0} (C_* (M) \otimes_{t_1} C_* (B))$. Recall that, for the chain complex $C_*(G) \otimes_{t_0} C_* (M \times_{\tau_1} B)$, we denote $p\= \deg b$ and $q\= \deg (m,b)$ for a generator $g\otimes (m,b)$, and we use the bidegree $(p_1,p_2)\=(q-p,p)$ to define the filtration; for the chain complex $C_*(G) \otimes_{t_0} C_* (M \times_{\tau_1} B)$, instead, we define directly the components $p_1$ (dimension of a chain in $C_*(M)$) and $p_2$ (dimension of a chain in $C_*(B)$) of the bidegree $(p_1,p_2)$ defining the filtration, and then we denote $p\= p_2$ and $q\=p_1+p_2$ to correctly keep track of all the indices.

Let us consider at first the maps of the reduction $\id \otimes \TEZ_1$. Using the definitions, it is easy to observe that the maps $f_{\id \otimes \TEZ_1}$ and $g_{\id \otimes \TEZ_1}$ do not increase the indices $p$ and $q$, while the map  $h_{\id \otimes \TEZ_1}$ does not increase the index $p$ and increases $q$ (at most) by 1. 
The operators
\begin{align*}
\varphi_{\id \otimes \TEZ_1}  \= \sum_{i=0}^{\infty}{(-1)^i(h_{\id \otimes \TEZ_1} \tilde{\delta}_0)^i} , \\
\psi_{\id \otimes \TEZ_1}  \= \sum_{i=0}^{\infty}{(-1)^i(\tilde{\delta}_0  h_{\id \otimes \TEZ_1})^i} ,
\end{align*}
of the BPL (see statement of Theorem~\ref{thm:bpl}) do not increase $p$ and $q$ as well, hence we can easily deduce the behavior of the maps of the reduction $\id \otimes_{t_0} \TEZ_1$: the maps $f_{\id \otimes_{t_0} \TEZ_1}$ and $g_{\id \otimes_{t_0} \TEZ_1}$ do not increase $p$ and $q$, while the map  $h_{\id \otimes_{t_0} \TEZ_1}$ does not increase the index $p$ and increases $q$ (at most) by 1. Remembering that we are using the bidegree $(p_1,p_2)\= (q-p,p)$ to define the filtrations, we can immediately conclude (recalling Remark~\ref{rem:ineq_degrees}) that $f_{\id \otimes_{t_0} \TEZ_1}$ and $g_{\id \otimes_{t_0} \TEZ_1}$ are compatible with the filtrations.  Since $h_{\id \otimes_{t_0} \TEZ_1}$ does not increase the index $p_2$ but can increase $p_1+p_2$ by $1$, Remark~\ref{rem:ineq_degrees} (with obvious modifications) tells us that $h_{\id \otimes_{t_0} \TEZ_1}$ sends a generator of filtration degrees $(p_1,p_2)$ to a linear combination of generators whose filtration degrees $(x_1,x_2)$ must satisfy $x_1+x_2\le p_1 +p_2 +1$ and $x_2\le p_2$ (see Figure~\ref{fig:hT2}). This implies that the image via $h_{\id \otimes_{t_0} \TEZ_1}$ of the chain subcomplex of the filtration indexed by the downset $p(P;2)=T^2_P$ is contained in the one indexed by  $p(P+(1,0);2)=T^2_{P+(1,0)}$.
\end{proof}

\begin{figure}[ht]
\centering
\includegraphics[scale=0.7]{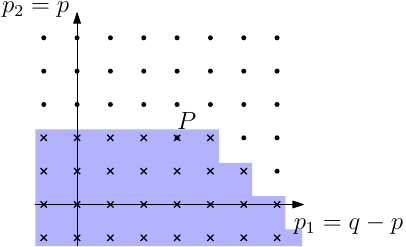}
\caption{Graphical representation of an argument of the proof of Proposition~\ref{prop:rdct2-compatible}. The homotopy $h_{\id \otimes_{t_0} \TEZ_1}$ applied to a generator having filtration bidegree $P\in \Z^2$ can only reach the positions marked by the symbol {$\times$}. In the figure we shaded the set of these points.}
\label{fig:hT2}
\end{figure}

We now go on to study the last two reductions $\rho'_1, \rho'_2$ of the diagram~(\ref{eq:2crpr-efhm}). 
First of all, it is convenient to set the notation for the reductions~(\ref{eq:BGequiv_gen_m2}) representing the effective homology of $G,M,B$. Let us denote 
\begin{align*} 
    \rho^{(0)}&\=(f^{(0)},g^{(0)},h^{(0)}): \hat{C}G_* \rrdc C_* (G) , \\
    \rho^{(1)}&\=(f^{(1)},g^{(1)},h^{(1)}): \hat{C}M_* \rrdc C_* (M) , \\
    \rho^{(2)}&\=(f^{(2)},g^{(2)},h^{(2)}): \hat{C}B_* \rrdc C_* (B) 
\end{align*}
and
\begin{align*} 
    \hat{\rho}^{(0)}&\=(\hat{f}^{(0)},\hat{g}^{(0)},\hat{h}^{(0)}): \hat{C}G_* \rrdc DG_* , \\
    \hat{\rho}^{(1)}&\=(\hat{f}^{(1)},\hat{g}^{(1)},\hat{h}^{(1)}): \hat{C}M_* \rrdc DM_* , \\
    \hat{\rho}^{(2)}&\=(\hat{f}^{(2)},\hat{g}^{(2)},\hat{h}^{(2)}): \hat{C}B_* \rrdc DB_* .
\end{align*}
Let us focus at first on the reduction
\begin{equation} \label{eq:rho'1} \rho'_1 :  \hat{C}G_* \otimes_{t_0} (\hat{C}M_* \otimes_{t_1} \hat{C}B_*) \rrdc C_*(G) \otimes_{t_0} (C_*(M) \otimes_{t_1} C_*(B)) . 
\end{equation}
In order to understand how its maps are defined, we consider how it is constructed. We start from the reduction 
\[ r \= \rho^{(1)}\otimes \rho^{(2)} : \hat{C}M_* \otimes \hat{C}B_* \rrdc C_*(M) \otimes C_*(B)  \]
and we perturb the differential of $C_*(M) \otimes C_*(B)$ by introducing the perturbation $\delta_1$ induced by the Eilenberg--Zilber reduction $\TEZ_1$. By Proposition~\ref{prop:perturb_filtr}, we know that $\delta_1$ reduces at least by $2$ the filtration degree $p$ (dimension of the chain groups of $C_*(B)$).
Now, applying the TPL, we obtain a reduction
\[ r_1 : \hat{C}M_* \otimes_{t_1} \hat{C}B_* \rrdc C_*(M) \otimes_{t_1} C_*(B) ,  \]
and Theorem~\ref{thm:tpl} gives us an explicit expression for the induced perturbation $\hat{\delta}_1$, from which one can easily observe that also $\hat{\delta}_1$ reduces at least by $2$ the degree $p$ (dimension of the chain groups of $\hat{C}B_*$).

Now we consider the reduction
\[ \overline{\rho}_1 \= \rho^{(0)}\otimes r_1 : \hat{C}G_* \otimes (\hat{C}M_* \otimes_{t_1} \hat{C}B_*) \rrdc C_*(G)\otimes (C_*(M) \otimes_{t_1} C_*(B))  \]
and we perturb the differential of $C_*(G)\otimes (C_*(M) \otimes_{t_1} C_*(B))$ by introducing the perturbation $\tilde{\delta}'_0$, that is the perturbation induced via the BPL (applied to construct the reduction $\id \otimes_{t_0} \TEZ_1$) by the perturbation $\tilde{\delta}_0$ (see above). In other words, the perturbation $\tilde{\delta}'_0$ is defined as 
\begin{equation} \label{eq:tilde'delta0}
    \tilde{\delta}'_0 \= f_{\id \otimes \TEZ_1} \tilde{\delta}_0 \varphi_{\id \otimes \TEZ_1} g_{\id \otimes \TEZ_1},
\end{equation}
and added to the differential of $C_*(G)\otimes (C_*(M) \otimes_{t_1} C_*(B))$ gives the differential of $C_*(G)\otimes_{t_0} (C_*(M) \otimes_{t_1} C_*(B))$. From the formula ~(\ref{eq:tilde'delta0}) we notice that $\tilde{\delta}'_0$ decreases at least by $2$ the degree $q$ (dimension of the chain groups of $C_*(M) \otimes_{t_1}C_*(B)$), since as mentioned before the maps $f_{\id \otimes \TEZ_1} , g_{\id \otimes \TEZ_1}$ and $\varphi_{\id \otimes \TEZ_1}$ do not increase the degree $q$. By applying the TPL we obtain the reduction $\rho'_1$ of~(\ref{eq:rho'1}). Let us denote $\hat{\delta}_0$ the perturbation induced by this application of the TPL, that is the perturbation which, added to the differential of $\hat{C} G_* \otimes (\hat{C} M_* \otimes_{t_1}\hat{C} B_*)$, yields the differential of $\hat{C} G_* \otimes_{t_0} (\hat{C} M_* \otimes_{t_1}\hat{C} B_*)$. Again, using its explicit definition (see Theorem~\ref{thm:tpl}), one can easily observe that  $\hat{\delta}_0$ reduces at least by $2$ the filtration degree $q$ (dimension of the chain groups of $\hat{C}M_* \otimes_{t_1}\hat{C}B_*$).

We can now show that, if we filter the chain complex $\hat{C} G_* \otimes_{t_0} (\hat{C} M_* \otimes_{t_1}\hat{C} B_*)$ mimicking the definition of the filtration of $C_*(G)\otimes_{t_0} (C_*(M) \otimes_{t_1} C_*(B))$, we obtain a valid filtration of chain complexes.

\begin{proposition}
\label{prop:tensor2_valid_filtr}
The filtration $\{ F_{p(P;2)} \}$ of the chain complex $\hat{C}_* \= \hat{C} G_* \otimes_{t_0} (\hat{C} M_* \otimes_{t_1}\hat{C} B_*)$ defined as in Definition~\ref{defn:2tnpr-filtration}, that is 
\[ F_{p(P;2)}\hat{C}_n\= \hspace{-10pt} \bigoplus_{\substack{i+j+k=n, \\ (j,k)\in p(P;2)}} \hspace{-10pt} 
 \hat{C} G_i \otimes_{t_0} (\hat{C} M_j \otimes_{t_1} \hat{C} B_k),
\]
is a valid filtration of chain subcomplexes.
\end{proposition}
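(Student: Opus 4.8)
The plan is to follow the template of the proof of Proposition~\ref{prop:F_tensor}, with the three factors $C_*(G),C_*(M),C_*(B)$ replaced by $\hat{C}G_*,\hat{C}M_*,\hat{C}B_*$. What must be shown is that the differential $d$ of $\hat{C}_*\=\hat{C}G_*\otimes_{t_0}(\hat{C}M_*\otimes_{t_1}\hat{C}B_*)$ satisfies $d(F_{p(P;2)})\subseteq F_{p(P;2)}$ for every $P\in\Z^2$. A generator $\sigma\=g\otimes(m\otimes b)$ has filtration bidegree $(j,k)$, where $j$ and $k$ are the dimensions of $m$ in $\hat{C}M_*$ and of $b$ in $\hat{C}B_*$, so it suffices to check that every generator appearing in $d\sigma$ has a bidegree $(x_1,x_2)$ with $x_1+x_2\le j+k$ and $x_2\le k$. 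First I would record the elementary fact about $p(P;2)=T^2_P$ — already used implicitly in the proofs of Propositions~\ref{prop:Filtr_TwCar} and~\ref{prop:F_tensor} — that, given any $(q_1,q_2)\in T^2_P$, every point $(x_1,x_2)$ with $x_1+x_2\le q_1+q_2$ and $x_2\le q_2$ again lies in $T^2_P$; this is immediate from $T^2_P=\{(x_1,x_2):(x_1+x_2,x_2)\le_{\text{lex}}(p_1+p_2,p_2)\}$ by distinguishing whether $x_1+x_2$ is strictly smaller than, or equal to, $q_1+q_2$.

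Next I would decompose the differential as $d=(d_{\hat{C}G}\otimes\id\otimes\id)\pm(\id\otimes d_{\hat{C}M}\otimes\id)\pm(\id\otimes\id\otimes d_{\hat{C}B})\pm(\id\otimes\hat{\delta}_1)+\hat{\delta}_0$, where $\overline{d}^\otimes=d_{\hat{C}M}\otimes\id\pm\id\otimes d_{\hat{C}B}$ and $\hat{\delta}_1,\hat{\delta}_0$ are the perturbations introduced, via the Trivial Perturbation Lemma (Theorem~\ref{thm:tpl}), when passing from $\otimes$ to $\otimes_{t_1}$ and then to $\otimes_{t_0}$, exactly as in the construction of $\rho'_1$ in~(\ref{eq:rho'1}) described before the statement. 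Then I would check each summand on a generator of bidegree $(j,k)$: $d_{\hat{C}G}\otimes\id\otimes\id$ fixes $(j,k)$; $d_{\hat{C}M}\otimes\id$ sends it to $(j-1,k)$ and $\id\otimes d_{\hat{C}B}$ to $(j,k-1)$, both lowering $j+k$ by one without raising $k$; $\hat{\delta}_1$, which has total degree $-1$ and by Proposition~\ref{prop:perturb_filtr} (transported through the TPL, whose maps preserve internal dimensions) lowers $k$ by at least $2$, must therefore raise $j$ by at least $1$, giving target bidegrees $(j+\ell-1,k-\ell)$ with $\ell\ge 2$, so that $x_1+x_2=j+k-1<j+k$ and $x_2=k-\ell\le k$; and $\hat{\delta}_0$ lowers $q=j+k$ by at least $2$, as recalled in the paragraph preceding the proposition, so each target bidegree satisfies $x_1+x_2\le j+k-2<j+k$. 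In every case the target bidegree meets the required inequalities and hence, by the downset fact above, lies in $p(P;2)$; summing over the pieces gives $d\sigma\in F_{p(P;2)}$, as desired. This is the content schematically depicted, for the analogous Proposition~\ref{prop:F_tensor}, in Figure~\ref{fig:T2_Filtr_Tensor}.

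The one step deserving care — which I would flag as the main point rather than a genuine difficulty — is the treatment of $\hat{\delta}_0$: it is only known to \emph{decrease the total degree} $j+k$, and it may redistribute internal dimension in a way that \emph{increases} the $\hat{C}B_*$-dimension $k$. What makes this harmless is precisely the asymmetric shape of $T^2_P$: since $\hat{\delta}_0$ lowers $x_1+x_2$ strictly, the resulting point lies in $T^2_P$ regardless of its second coordinate, by the first case of the downset fact. Hence no separate bound on how $\hat{\delta}_0$ acts on $k$ alone is needed, and the whole argument reduces to reading off the degree shifts of $\overline{d}^\otimes$, $\hat{\delta}_1$ and $\hat{\delta}_0$ from the constructions preceding the statement, which is routine.
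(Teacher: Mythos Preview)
Your proposal is correct and follows essentially the same approach as the paper: decompose the differential into the unperturbed tensor-product part plus the two perturbations $\hat{\delta}_1$ and $\hat{\delta}_0$, use the facts (established in the text preceding the proposition) that $\hat{\delta}_1$ lowers the $\hat{C}B_*$-dimension by at least $2$ and $\hat{\delta}_0$ lowers the total dimension $j+k$ by at least $2$, and then invoke the argument of Proposition~\ref{prop:F_tensor}. Your write-up is in fact more explicit than the paper's --- which simply notes that the situation is identical to Proposition~\ref{prop:F_tensor} and that the argument carries over --- and your remark that $\hat{\delta}_0$ may increase the second coordinate $k$ but that this is harmless because it strictly lowers $x_1+x_2$ makes precise a point the paper leaves to the reader.
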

\begin{proof}
The chain complex $\hat{C} G_* \otimes_{t_0} (\hat{C} M_* \otimes_{t_1}\hat{C} B_*)$ is obtained from $\hat{C} G_* \otimes (\hat{C} M_* \otimes\hat{C} B_*)$ introducing two perturbations: the perturbation $\hat{\delta}_1$, which added to the differential of $\hat{C} M_* \otimes \hat{C} B_*$ gives the differential of $\hat{C} M_* \otimes_{t_1}\hat{C} B_*$, and the perturbation $\hat{\delta}_0$, which added to the differential of $\hat{C} G_* \otimes (\hat{C} M_* \otimes_{t_1}\hat{C} B_*)$ gives the differential of $\hat{C} G_* \otimes_{t_0} (\hat{C} M_* \otimes_{t_1}\hat{C} B_*)$. 
We have showed that $\hat{\delta}_1$ decreases at least by $2$ the filtration degree $p$ (dimension of $\hat{C}B_*$) and $\hat{\delta}_0$ decreases at least by $2$ the filtration degree $q$ (dimension of $\hat{C}M_* \otimes_{t_1} \hat{C}B_*$). Since this is the same situation of the proof of Proposition~\ref{prop:F_tensor}, that argument carries over.
\end{proof}

The next result describes the behavior of the reduction $\rho'_1 =(f_{\rho'_1},g_{\rho'_1},h_{\rho'_1})$ of (\ref{eq:2crpr-efhm}) with respect to the defined filtrations. 
\begin{proposition} \label{prop:maps_rho'1}
The maps $f_{\rho'_1}$ and $g_{\rho'_1}$ of the reduction $\rho'_1$ are compatible with the defined filtrations. The homotopy $h_{\rho'_1}$ is such that
\[ h_{\rho'_1} (F_{p(P;2)}) \subseteq F_{p(P+(0,1);2)} ,
\]
for all $P\in \Z^2$.
\end{proposition}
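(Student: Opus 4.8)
The plan is to trace how the three maps of $\rho'_1$ act on the bidegree $(p_1,p_2)$ used to define the filtrations --- where $p_1$ records the dimension of the $\hat CM_\ast$ (respectively $C_\ast(M)$) tensor factor and $p_2$ that of the $\hat CB_\ast$ (respectively $C_\ast(B)$) factor --- and then to convert these degree shifts into inclusions of the downsets $T^2_P$ indexing the filtration. First I would unwind the construction of $\rho'_1$ recalled above: it is obtained from the tensor product of reductions $\rho^{(0)}\otimes(\rho^{(1)}\otimes\rho^{(2)})$ by two successive applications of the Trivial Perturbation Lemma (Theorem~\ref{thm:tpl}) --- one to pass from $\rho^{(1)}\otimes\rho^{(2)}$ to $r_1$, one to pass from $\rho^{(0)}\otimes r_1=\overline{\rho}_1$ to $\rho'_1$. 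Since the TPL leaves the component maps of a reduction untouched ($f'=f$, $g'=g$, $h'=h$), the maps of $\rho'_1$ coincide with those of $\rho^{(0)}\otimes(\rho^{(1)}\otimes\rho^{(2)})$, and applying the tensor product formula of Proposition~\ref{prop:red_tensor} twice yields
\begin{align*}
 f_{\rho'_1} &= f^{(0)}\otimes f^{(1)}\otimes f^{(2)}, \\
 g_{\rho'_1} &= g^{(0)}\otimes g^{(1)}\otimes g^{(2)}, \\
 h_{\rho'_1} &= h^{(0)}\otimes\id\otimes\id \;+\; (g^{(0)}f^{(0)})\otimes h^{(1)}\otimes\id \;+\; (g^{(0)}f^{(0)})\otimes(g^{(1)}f^{(1)})\otimes h^{(2)} .
\end{align*}
(The twistings $t_0,t_1$ do not affect the underlying graded modules, so this degree bookkeeping is insensitive to them.)

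Next I would read off the degree behaviour. The components $f^{(s)},g^{(s)}$ are chain maps of degree $0$, hence $f_{\rho'_1}$ and $g_{\rho'_1}$ send $\hat CG_i\otimes(\hat CM_j\otimes\hat CB_k)$ into $C_i(G)\otimes(C_j(M)\otimes C_k(B))$, preserving the bidegree $(p_1,p_2)=(j,k)$; this makes them automatically compatible with the two filtrations. Each homotopy $h^{(s)}$ has degree $+1$ in its own factor and acts trivially on the others, so the three summands of $h_{\rho'_1}$ send a generator of bidegree $(p_1,p_2)$ to a linear combination of generators of bidegrees $(p_1,p_2)$, $(p_1+1,p_2)$ and $(p_1,p_2+1)$ respectively.

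Finally I would check the downset inclusion. From $\varphi_2(x_1,x_2)=(x_1+x_2,x_2)$ one obtains the explicit description $T^2_X=\{(y_1,y_2)\in\Z^2 : (y_1+y_2,y_2)\le_{\text{lex}}(X_1+X_2,X_2)\}$. A short case analysis --- splitting according to whether $p_1+p_2<P_1+P_2$ or $p_1+p_2=P_1+P_2$ (with $p_2\le P_2$) --- shows that whenever $(p_1,p_2)\in T^2_P$, each of $(p_1,p_2)$, $(p_1+1,p_2)$ and $(p_1,p_2+1)$ lies in $T^2_{P+(0,1)}$; equivalently, $T^2_P\subseteq T^2_{P+(0,1)}$, $T^2_{P+(1,0)}\subseteq T^2_{P+(0,1)}$, and adding $e_2$ to a point of $T^2_P$ keeps it inside $T^2_{P+(0,1)}$. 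Combining this with the previous paragraph gives $h_{\rho'_1}(F_{p(P;2)})\subseteq F_{p(P+(0,1);2)}$, as claimed.

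I expect the only mildly delicate point to be keeping straight which tensor factor each operator acts on when expanding $\rho^{(0)}\otimes(\rho^{(1)}\otimes\rho^{(2)})$ via Proposition~\ref{prop:red_tensor} --- in particular noticing that the term $(gf)\otimes h'$ appearing in the homotopy of a tensor product of reductions is exactly what produces the summand raising $p_2$ by one, so that the target downset is $T^2_{P+(0,1)}$ rather than $T^2_{P+(1,0)}$. The downset computation itself is routine once the lexicographic description of $T^2_X$ is written down.
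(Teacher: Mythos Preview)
Your proposal is correct and follows essentially the same approach as the paper: you derive the explicit formulas for $f_{\rho'_1}$, $g_{\rho'_1}$, $h_{\rho'_1}$ from Proposition~\ref{prop:red_tensor} (noting, as the paper implicitly uses, that the TPL leaves the maps of the reduction unchanged), read off their effect on the bidegree $(p_1,p_2)$, and conclude with the downset inclusion $T^2_{P+(1,0)}\subseteq T^2_{P+(0,1)}$. Your version is in fact slightly more careful than the paper's --- you correctly list the bidegree $(p_1,p_2)$ coming from the summand $h^{(0)}\otimes\id\otimes\id$ (which the paper omits, harmlessly) and spell out the lexicographic check for the downset inclusions.
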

\begin{proof}
The maps $f_{\rho'_1}$ and $g_{\rho'_1}$ preserve both degrees $p_1\= q-p$ (dimension of the chain groups of $\hat{C}M_*$, resp. $C_*(M)$) and $p_2\= p$ (dimension of the chain groups of $\hat{C}B_*$, resp. $C_*(B)$), as one can easily observe from their explicit definition (obtained by applying twice Proposition~\ref{prop:red_tensor}):
\[ f_{\rho'_1} \= f^{(0)}\otimes f^{(1)}\otimes f^{(2)}, \qquad g_{\rho'_1} \= g^{(0)}\otimes g^{(1)}\otimes g^{(2)} .
\]
The map
\[ h_{\rho'_1} \= h^{(0)}\otimes \id \otimes \id  + (g^{(0)}f^{(0)})\otimes h^{(1)} \otimes \id + (g^{(0)}f^{(0)})\otimes (g^{(1)} f^{(1)}) \otimes h^{(2)} 
\]
behaves differently: since the homotopies $h^{(0)}, h^{(1)} ,h^{(2)}$ are graded maps of degree $+1$, $h_{\rho'_1}$ sends a generator of $\hat{C} G_* \otimes_{t_0} (\hat{C} M_* \otimes_{t_1}\hat{C} B_*)$ of filtration bidegree $(p_1,p_2)$ to a linear combination of generators having filtration bidegrees $(p_1 +1,p_2)$ and $(p_1,p_2 +1)$. We obtain the thesis by observing that the points of any (fixed) downset $p(P;2)$ translated by $(1,0)$ or by $(0,1)$ are contained in the downset  $p(P+(0,1);2)$.
\end{proof}


We can now focus on the last reduction of (\ref{eq:2crpr-efhm}),
\begin{equation} \label{eq:rho'2} \rho'_2 :  \hat{C}G_* \otimes_{t_0} (\hat{C}M_* \otimes_{t_1} \hat{C}B_*) \rrdc DG_* \otimes_{t_0} (DM_* \otimes_{t_1} DB_*) .
\end{equation}
Once again, the best way to understand the behavior of its maps is to review how it is constructed. We start from the reduction 
\[ \hat{r} \= \hat{\rho}^{(1)}\otimes \hat{\rho}^{(2)} : \hat{C}M_* \otimes \hat{C}B_* \rrdc DM_* \otimes DB_*  \]
and we perturb the differential of $\hat{C}M_* \otimes \hat{C}B_*$ by introducing the perturbation $\hat{\delta}_1$ (see above).  Since $\hat{\delta}_1$ reduces at least by $2$ the filtration degree $p$ (dimension of the chain groups of $\hat{C}B_*$), the composition $h_{\hat{r}} \hat{\delta}_1$ strictly reduces the degree $p$, ensuring that the nilpotency condition of Theorem~\ref{thm:bpl} is satisfied.  
We can thus apply the BPL, obtaining a reduction
\[ \hat{r}_1 : \hat{C}M_* \otimes_{t_1} \hat{C}B_* \rrdc DM_* \otimes_{t_1} DB_* .  \]
Let us denote $\delta_1^D$ the  perturbation induced on $DM_* \otimes_{t_1} DB_*$  which, as it can be deduced from its explicit definition (see again the statement of Theorem~\ref{thm:bpl}), reduces at least by $2$ the degree $p$ (dimension of the chain groups of $DB_*$).

Now let us consider the reduction
\[ \overline{\rho}_2 \= \hat{\rho}^{(0)}\otimes \hat{r}_1 : \hat{C}G_* \otimes (\hat{C}M_* \otimes_{t_1} \hat{C}B_*) \rrdc DG_*\otimes (DM_* \otimes_{t_1} DB_*)  \]
and perturb the differential of $\hat{C}G_* \otimes (\hat{C}M_* \otimes_{t_1} \hat{C}B_*)$ by introducing the perturbation $\hat{\delta}_0$ defined above.
Since $\hat{\delta}_0$ decreases at least by $2$ the filtration degree $q$ (dimension of the chain groups of $\hat{C}M_* \otimes_{t_1} \hat{C}B_*$), the composition $h_{\overline{\rho}_2} \hat{\delta}_0$ strictly reduces the degree $p$, ensuring that the nilpotency condition of Theorem~\ref{thm:bpl} is satisfied. We can therefore apply the BPL,  obtaining the reduction $\rho'_2$ of~(\ref{eq:rho'2}). Let us denote $\delta_0^D$ the  perturbation induced by $\hat{\delta}_0$ on $DG_* \otimes (DM_* \otimes_{t_1} DB_*)$,  which again reduces at least by $2$ the filtration degree $q$ (dimension of the chain groups of $DM_* \otimes_{t_1} DB_*$).

As we did before for the chain complex $\hat{C} G_* \otimes_{t_0} (\hat{C} M_* \otimes_{t_1}\hat{C} B_*)$, we can now show that our usual definition of a filtration for twisted tensor products gives a valid filtration  of the chain complex $DG_* \otimes_{t_0} (DM_* \otimes_{t_1} DB_*)$.

\begin{proposition}
\label{prop:tensor3_valid_filtr}
The filtration $\{ F_{p(P;2)} \}$ of the chain complex $D_* \= DG_* \otimes_{t_0} (DM_* \otimes_{t_1} DB_*)$ defined as in Definition~\ref{defn:2tnpr-filtration}, that is 
\[ F_{p(P;2)}D_n\= \hspace{-10pt} \bigoplus_{\substack{i+j+k=n, \\ (j,k)\in p(P;2)}} \hspace{-10pt} 
 D G_i \otimes_{t_0} (D M_j \otimes_{t_1} D B_k),
\]
is a valid filtration of chain subcomplexes.
\end{proposition}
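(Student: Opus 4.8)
The plan is to observe that the chain complex $D_* = DG_* \otimes_{t_0}(DM_* \otimes_{t_1} DB_*)$ is built by exactly the same sequence of constructions as the chain complex $C_*(G)\otimes_{t_0}(C_*(M)\otimes_{t_1} C_*(B))$ of Proposition~\ref{prop:F_tensor} (and as $\hat{C} G_* \otimes_{t_0}(\hat{C} M_* \otimes_{t_1}\hat{C} B_*)$ of Proposition~\ref{prop:tensor2_valid_filtr}), so that once the two relevant degree-reduction properties of the perturbations are in place, the combinatorial argument already given there applies verbatim.

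Concretely, I would first recall from the construction preceding the statement how $D_*$ is assembled: starting from the non-twisted triple tensor product $DG_* \otimes (DM_* \otimes DB_*)$, one introduces the perturbation $\delta_1^D$ to pass to $DM_* \otimes_{t_1} DB_*$, and then the perturbation $\delta_0^D$ to pass to $DG_* \otimes_{t_0}(DM_* \otimes_{t_1} DB_*)$. The two facts needed — both established while building the reductions $\hat{r}_1$ and $\rho'_2$ — are that $\delta_1^D$ lowers the homological degree of the $DB_*$ factor by at least $2$, and that $\delta_0^D$ lowers the homological degree of the $DM_* \otimes_{t_1} DB_*$ factor by at least $2$. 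Effectivity of $DG_*$, $DM_*$, $DB_*$ is irrelevant to this statement, since validity of a filtration is a condition on the differential alone.

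Then, writing the differential of $D_*$ as in the proof of Proposition~\ref{prop:F_tensor}, namely $d = d^{\otimes} + \sum_{k \ge 2} d^k$ with $d^{\otimes} = d_{DG} \otimes \id \pm \id \otimes d_{DM \otimes_{t_1} DB}$ and the differential of $DM_* \otimes_{t_1} DB_*$ in turn equal to $\bar{d}^{\otimes} + \sum_{\ell \ge 2} \bar{d}^{\ell}$, one sees that with respect to the bidegree $(j,k) = (\dim DM_*, \dim DB_*)$ used to define the filtration, each summand carries a generator of bidegree $Q = (q_1, q_2)$ to generators of bidegree $(q_1, q_2)$, $(q_1 - 1, q_2)$, $(q_1, q_2 - 1)$, $(q_1 + \ell - 1, q_2 - \ell)$ for some $\ell \ge 2$, or of total degree $q_1 + q_2 - k$ for some $k \ge 2$. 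Since, as already used in Propositions~\ref{prop:Filtr_TwCar} and~\ref{prop:F_tensor} and illustrated in Figure~\ref{fig:T2_Filtr_Tensor}, all such points again lie in $T^2_P$ whenever $Q \in T^2_P$, one concludes $d(F_{p(P;2)}) \subseteq F_{p(P;2)}$ for all $P \in \Z^2$, which is the asserted validity. I expect no genuine obstacle here: the only delicate point is the bookkeeping that $\delta_0^D$ and $\delta_1^D$ really do drop the claimed degrees by at least $2$, and this was verified step by step in the preceding paragraphs, so it may simply be cited.
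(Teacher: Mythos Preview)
Your proposal is correct and follows essentially the same approach as the paper: both identify the two perturbations $\delta_1^D$ and $\delta_0^D$, cite the previously established fact that each decreases the relevant degree by at least $2$, and then invoke the combinatorial argument of Proposition~\ref{prop:F_tensor} verbatim. The paper's proof is more terse, simply stating that the situation is identical to that of Proposition~\ref{prop:tensor2_valid_filtr} and Proposition~\ref{prop:F_tensor}, whereas you spell out the decomposition of the differential again, but the content is the same.
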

\begin{proof}
Similar considerations to the proof of Proposition~\ref{prop:tensor2_valid_filtr} hold.
The chain complex $DG_* \otimes_{t_0} (DM_* \otimes_{t_1} DB_*)$ is obtained from $DG_* \otimes (DM_* \otimes DB_*)$ introducing two perturbations: the perturbation $\delta_1^D$, which added to the differential of $DM_* \otimes DB_*$ gives the differential of $DM_* \otimes_{t_1} DB_*$, and the perturbation $\delta_0^D$, which added to the differential of $DG_* \otimes (DM_* \otimes_{t_1}DB_*)$ gives the differential of $DG_* \otimes_{t_0} (DM_* \otimes_{t_1}DB_*)$. 
We have showed that $\delta_1^D$ decreases at least by $2$ the filtration degree $p$ (dimension of $DB_*$) and $\delta_0^D$ decreases at least by $2$ the filtration degree $q$ (dimension of $DM_* \otimes_{t_1} DB_*$). Since this is the same situation of the proof of Proposition~\ref{prop:F_tensor}, that argument carries over.
\end{proof}

We can now study the behavior of the reduction $\rho'_2 =(f_{\rho'_2},g_{\rho'_2},h_{\rho'_2})$ with respect to the defined filtrations. As we will see, the role played by the BPL in defining the reduction $\rho'_2$ forces us to use slightly more subtle arguments than in Proposition~\ref{prop:maps_rho'1}.

\begin{proposition} \label{prop:maps_rho'2}
The maps $f_{\rho'_2}$ and $g_{\rho'_2}$ of the reduction $\rho'_2$ are compatible with the defined filtrations. The homotopy $h_{\rho'_2}$ is such that
\[ h_{\rho'_2} (F_{p(P;2)}) \subseteq F_{p(P+(0,1);2)} ,
\]
for all $P\in \Z^2$.
\end{proposition}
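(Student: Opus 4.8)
The plan is to unwind the construction of $\rho'_2$ exactly as it was assembled above --- from the effective homologies $\hat{\rho}^{(0)},\hat{\rho}^{(1)},\hat{\rho}^{(2)}$ through $\hat{r}=\hat{\rho}^{(1)}\otimes\hat{\rho}^{(2)}$, then $\hat{r}_1$ (BPL), then $\overline{\rho}_2=\hat{\rho}^{(0)}\otimes\hat{r}_1$, and finally $\rho'_2$ (BPL) --- and to track at every stage how each component map moves the bidegree $(p_1,p_2)$ (dimensions of the $M$- and $B$-factors) used to define the filtrations. Throughout I would use that $\varphi_2(x_1,x_2)=(x_1+x_2,x_2)$, so that any point with $x_1+x_2\le p_1+p_2$ and $x_2\le p_2$ lies in $T^2_{(p_1,p_2)}$, and more generally any point with $x_1+x_2<p_1+p_2$ lies in $T^2_{(p_1,p_2)}$ regardless of $x_2$.

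First I would record that $f^{(i)},g^{(i)},\hat{f}^{(i)},\hat{g}^{(i)}$ preserve, and $h^{(i)},\hat{h}^{(i)}$ raise by one, the dimension of the respective factor ($i=0,1,2$). Passing to $\hat{r}_1$: since $\hat{\delta}_1$ lowers the $B$-dimension by at least $2$ while $h_{\hat{r}}=\hat{h}^{(1)}\otimes\id+(\hat{g}^{(1)}\hat{f}^{(1)})\otimes\hat{h}^{(2)}$ raises $p_1$ or $p_2$ by exactly one, both $\hat{\delta}_1 h_{\hat{r}}$ and $h_{\hat{r}}\hat{\delta}_1$ strictly lower the $B$-dimension; hence $\varphi_{\hat{r}},\psi_{\hat{r}}$ do not raise it, so $f_{\hat{r}_1},g_{\hat{r}_1}$ preserve $p_1+p_2$ and do not raise $p_2$, while $h_{\hat{r}_1}$ raises $p_1+p_2$ by one and $p_2$ by at most one. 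Tensoring with $\hat{\rho}^{(0)}$ and using Proposition~\ref{prop:red_tensor}, the maps $f_{\overline{\rho}_2},g_{\overline{\rho}_2}$ preserve the $G$-dimension and $p_1+p_2$ and do not raise $p_2$, while $h_{\overline{\rho}_2}=\hat{h}^{(0)}\otimes\id+(\hat{g}^{(0)}\hat{f}^{(0)})\otimes h_{\hat{r}_1}$ sends a generator of bidegree $(p_1,p_2)$ to a combination of generators of bidegree $(p_1,p_2)$ (first summand, which merely raises the $G$-dimension) and of bidegrees $(x_1,x_2)$ with $x_1+x_2=p_1+p_2+1$, $x_2\le p_2+1$ (second summand); all these lie in $T^2_{(p_1,p_2+1)}$, so $h_{\overline{\rho}_2}(F_{p(P;2)})\subseteq F_{p(P+(0,1);2)}$.

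The decisive step is the final BPL. The perturbation $\hat{\delta}_0$ lowers $p_1+p_2$ (the dimension of $\hat{C}M_*\otimes_{t_1}\hat{C}B_*$) by at least $2$ and, being a differential perturbation, therefore raises the $G$-dimension by at least one; since $h_{\overline{\rho}_2}$ either raises the $G$-dimension by one or fixes it while raising $p_1+p_2$ by one, both $\hat{\delta}_0 h_{\overline{\rho}_2}$ and $h_{\overline{\rho}_2}\hat{\delta}_0$ strictly raise the $G$-dimension, equivalently strictly lower $p_1+p_2$. Hence every term of index $\ge 1$ in $\psi_{\overline{\rho}_2}=\sum(-1)^i(\hat{\delta}_0 h_{\overline{\rho}_2})^i$ and in $\varphi_{\overline{\rho}_2}=\sum(-1)^i(h_{\overline{\rho}_2}\hat{\delta}_0)^i$ strictly lowers $p_1+p_2$. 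From $f_{\rho'_2}=f_{\overline{\rho}_2}\psi_{\overline{\rho}_2}$ and $g_{\rho'_2}=\varphi_{\overline{\rho}_2}g_{\overline{\rho}_2}$ I conclude: the index-$0$ term reproduces $f_{\overline{\rho}_2}$ (resp.\ $g_{\overline{\rho}_2}$), which sends bidegree $(p_1,p_2)$ to bidegrees with coordinate sum $p_1+p_2$ and second coordinate $\le p_2$, hence into $T^2_{(p_1,p_2)}$; every higher term strictly lowers $p_1+p_2$ and so lands even deeper in $T^2_{(p_1,p_2)}$. Thus $f_{\rho'_2},g_{\rho'_2}$ are filtration-compatible. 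For $h_{\rho'_2}=\varphi_{\overline{\rho}_2}h_{\overline{\rho}_2}$: the factor $h_{\overline{\rho}_2}$ already carries $F_{p(P;2)}$ into $F_{p(P+(0,1);2)}$, whose generators have coordinate sum at most $p_1+p_2+1$; the index-$0$ term of $\varphi_{\overline{\rho}_2}$ leaves this unchanged, and each higher term strictly lowers the coordinate sum below $p_1+p_2+1$, hence keeps the result in $F_{p(P+(0,1);2)}$. This gives $h_{\rho'_2}(F_{p(P;2)})\subseteq F_{p(P+(0,1);2)}$.

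The hard part --- and the reason this is more delicate than Proposition~\ref{prop:maps_rho'1}, where $\rho'_1$ comes from the TPL and its maps are literally those of the underlying tensor reduction --- is that the BPL alters $f$, $g$ and $h$ by the infinite series $\varphi$ and $\psi$, so one cannot read off the degree behaviour from a single tensor-product formula. One must check, in every summand of these series, that each application of a homotopy (which can push a generator one step out of its downset) is more than compensated by an application of $\hat{\delta}_0$ or $\hat{\delta}_1$ (which pulls it back at least two steps in the relevant coordinate), so that the net displacement never leaves $T^2_{P}$ (resp.\ $T^2_{P+(0,1)}$). Isolating the right invariant --- ``a perturbation composed with a homotopy strictly decreases $p_1+p_2$ (resp.\ $p_2$)'' --- is what makes the bookkeeping close.
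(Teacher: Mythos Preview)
Your proof is correct and follows essentially the same approach as the paper's: both unwind the BPL construction of $\rho'_2$ and isolate the decisive observation that each non-identity summand $(h_{\overline{\rho}_2}\hat{\delta}_0)^i$ or $(\hat{\delta}_0 h_{\overline{\rho}_2})^i$ strictly lowers $q=p_1+p_2$ (since $\hat{\delta}_0$ drops it by at least $2$ while $h_{\overline{\rho}_2}$ raises it by at most $1$), so that $f_{\rho'_2},g_{\rho'_2}$ stay within $T^2_P$ and $h_{\rho'_2}$ stays within $T^2_{P+(0,1)}$. Your treatment of the intermediate reduction $\hat{r}_1$ is actually more explicit than the paper's, which simply asserts the behaviour of $f_{\overline{\rho}_2},g_{\overline{\rho}_2}$ ``as a direct consequence of their definitions''; otherwise the two arguments coincide.
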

\begin{proof}
Recall that the maps $f_{\rho'_2}$ and $g_{\rho'_2}$ are defined as 
\begin{equation*}
f_{\rho'_2}  \= f_{\overline{\rho}_2}  \psi_{\overline{\rho}_2} , \qquad
g_{\rho'_2}  \= \varphi_{\overline{\rho}_2}  g_{\overline{\rho}_2} , 
\end{equation*}
where
\begin{equation} \label{eq:phi_psi_i}
\varphi_{\overline{\rho}_2}  \=  \sum_{i=0}^{\infty}{(-1)^i(h_{\overline{\rho}_2} \hat{\delta}_0)^i} , \qquad
\psi_{\overline{\rho}_2}  \= \sum_{i=0}^{\infty}{(-1)^i(\hat{\delta}_0  h_{\overline{\rho}_2})^i} .
\end{equation}
It is easy to see, as a direct consequence of their definitions, that the maps $f_{\overline{\rho}_2}$ and $g_{\overline{\rho}_2}$ do not change the filtration degree $q$ (dimension of the chain groups of $\hat{C}M_* \otimes_{t_1} \hat{C}B_*$, resp. $DM_* \otimes_{t_1} DB_*$) and do not increase the degree $p$  (dimension of the chain groups of $\hat{C}B_*$, resp. $DB_*$). Regarding the operators $\varphi_{\overline{\rho}_2}$ and $\psi_{\overline{\rho}_2}$, we can observe that they do not increase the filtration degree $q$, but we cannot say that they also maintain or decrease $p$. The reason for the last claim appears clear considering $\hat{\delta}_0 \= g_{\overline{\rho}_1} \tilde{\delta}'_0 f_{\overline{\rho}_1}$, where $\tilde{\delta}'_0$ is defined explicitly in equation~(\ref{eq:tilde'delta0}) and depends in turn on $\tilde{\delta}_0$, which can increase $p$ indefinitely. Nevertheless, we can distinguish the behavior of the summands with $i=0$ in equation~(\ref{eq:phi_psi_i}) from the behavior of the summands with $i>0$. The summands with $i=0$ in the equations defining $\varphi_{\overline{\rho}_2}$ and $\psi_{\overline{\rho}_2}$ are simply the identity of the chain complex $\hat{C}G_* \otimes_{t_0} (\hat{C}M_* \otimes_{t_1} \hat{C}B_*)$, which clearly maintains both $p$ and $q$ fixed. On the other hand, as we said, the summands with $i>0$ may increase the degree $p= p_2$, but they \emph{strictly} decrease the degree $q= p_1+p_2$ since $\hat{\delta}_0$ reduces $q$ at least by $2$. As a result, the operators $\varphi_{\overline{\rho}_2}$ and $\psi_{\overline{\rho}_2}$ send a generator of filtration bidegree $P\=(p_1,p_2)$ to a linear combination of generators whose filtration bidegrees can correspond \textquotedblleft exactly\textquotedblright\ to each point of the downset $p(P;2)=T^2_P$. This implies that the maps $f_{\rho'_2}$ and $g_{\rho'_2}$ are compatible with the defined filtrations $\{ F_{p(P;2)}\}$ of the chain complexes $\hat{C} G_* \otimes_{t_0} (\hat{C} M_* \otimes_{t_1}\hat{C} B_*)$ and  $DG_* \otimes_{t_0} (DM_* \otimes_{t_1} DB_*)$.

We now consider the map $h_{\rho'_2}  \= \varphi_{\overline{\rho}_2}  h_{\overline{\rho}_2}$ to prove the last part of the statement. Since we just described the behavior of  $\varphi_{\overline{\rho}_2}$ with respect to the degrees $p= p_2$ and $q= p_1+p_2$, we only need to focus on $h_{\overline{\rho}_2}$, which can increase both $p$ and $q$ at most by $1$, as one can easily observe from the expression
\[ h_{\overline{\rho}_2} \= \hat{h}^{(0)} \otimes \id_{\hat{C}M_* \otimes_{t_1}\hat{C}B_*} + (\hat{g}^{(0)} \hat{f}^{(0)}) \otimes h_{\hat{r}_1} .
\]
Therefore, because of the overall effect of $\varphi_{\overline{\rho}_2}$ and $h_{\overline{\rho}_2}$, the map $h_{\rho'_2}$ sends a generator of filtration bidegree $P\=(p_1,p_2)$ to a linear combination of generators whose filtration bidegrees correspond to the points of the downset $p(P;2)$ translated by $(0,1)$, that is the downset $p(P+(0,1);2)$. We display this behavior in Figure \ref{fig:h_rho2}. This implies that, considering the filtration $\{F_{p(P;2)}\}$ of $\hat{C} G_* \otimes_{t_0} (\hat{C} M_* \otimes_{t_1}\hat{C} B_*)$, the map $h_{\rho'_2}$ is such that
\[ h_{\rho'_2} (F_{p(P;2)}) \subseteq F_{p(P+(0,1);2)} ,
\]
for all $P\in \Z^2$.
\end{proof}

\begin{figure}[ht]
\centering
\includegraphics[scale=0.7]{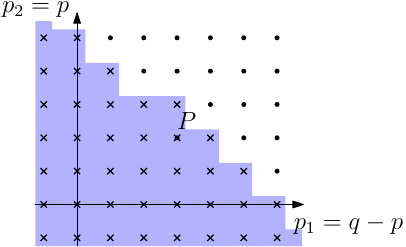}
\caption{Graphical representation of an argument of the proof of Proposition~\ref{prop:maps_rho'2}. The homotopy $h_{\rho'_2}$ applied to a generator having filtration bidegree $P\in \Z^2$ can reach the positions marked by the symbol {$\times$}. In the figure we shaded the set of these points, corresponding to the downset $p(P+(0,1);2)$.}
\label{fig:h_rho2}
\end{figure}


We can now prove the result which justifies the use of our effective homology methods to compute the higher Leray--Serre spectral sequence.

\begin{theorem}
\label{thm:2crpr-2page_S_effective}
From the $2$-page of the secondary connection (see Section~\ref{sec:SSys_over_downsets}), the higher Leray--Serre spectral sequence of the chain complex $C_\ast(G \times_{\tau_0} (M \times_{\tau_1} B))$ is naturally isomorphic to the higher spectral sequence we defined via the diagram~(\ref{eq:2crpr-efhm}) on the effective chain complex $DG_* \otimes_{t_0} (DM_* \otimes_{t_1} DB_*)$.
\end{theorem}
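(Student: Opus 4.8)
The plan is to transfer spectral system terms along the four reductions of the zigzag in diagram~(\ref{eq:2crpr-efhm}) by repeated application of Theorem~\ref{thm_Ana_orderred_gen} and Corollary~\ref{coro:thm_Ana_h_comp}, and then to compose the resulting term-by-term isomorphisms. For the topmost reduction $\TEZ_0$, Proposition~\ref{prop:rdct1-compatible} tells us that all three of its maps are compatible with the filtrations, so Corollary~\ref{coro:thm_Ana_h_comp} yields isomorphisms $S[z,s,p,b]\cong S'[z,s,p,b]$ for \emph{every} $4$-tuple of downsets. For the reductions $\id\otimes_{t_0}\TEZ_1$, $\rho'_1$ and $\rho'_2$, Propositions~\ref{prop:rdct2-compatible}, \ref{prop:maps_rho'1} and~\ref{prop:maps_rho'2} give that $f$ and $g$ are filtration-compatible while the homotopy shifts the filtration index by a single unit, namely $h(F_{p(P;2)})\subseteq F_{p(P+(1,0);2)}$ for $\id\otimes_{t_0}\TEZ_1$ and $h(F_{p(P;2)})\subseteq F_{p(P+(0,1);2)}$ for $\rho'_1$ and~$\rho'_2$. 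By Theorem~\ref{thm_Ana_orderred_gen}, each of these reductions therefore induces an isomorphism on $S[z,s,p,b]$ as soon as the $4$-tuple is ``wide enough'': translating the downset $z$ (resp.\ $p$) in $\Z^2$ by $(1,0)$ \emph{and} by $(0,1)$ must still land inside $s$ (resp.\ $b$).

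The core of the argument, and the step I expect to be the main obstacle, is to verify that this width condition holds precisely from the $2$-page onward. Substituting the explicit formulas~(\ref{eq:TP_k}) for the $2$-page $4$-tuple $z^*(P;2)=T^2_{P+e_1-2e_2}$, $s(P;2)=T^2_{P+e_1-e_2}$, $p(P;2)=T^2_P$, $b^*(P;2)=T^2_{P+e_2}$, the required inclusions become $T^2_{(P+e_1-2e_2)+e_i}\subseteq T^2_{P+e_1-e_2}$ and $T^2_{P+e_i}\subseteq T^2_{P+e_2}$ for $i=1,2$; these are transparent in the description of $T^2_{(a,b)}$ coming from $\varphi_2(a,b)=(a+b,b)$ and $\le_{\mathrm{lex}}$, in which $T^2_{(a,b)}$ consists of everything strictly below the anti-diagonal $x_1+x_2=a+b$ together with the points of that anti-diagonal with $x_2\le b$. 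In the tight case (the $(0,1)$-shift of $\rho'_1$ and $\rho'_2$, i.e.\ $i=2$) the two sides coincide, which both confirms the claim and explains why the $1$-page, whose defining $4$-tuples have $s=z$ and $b=p$ and thus leave no room for any shift, is \emph{not} covered. One then checks that every $4$-tuple arising in a connection emanating from the $2$-page is obtained from the terms $S^*(P;2)$ through the operations of Proposition~\ref{prop:homology_z1b3}, the natural isomorphisms of \cite[Lemma~3.8]{matschke2013successive}, and group extensions, and that in each such operation the gaps $s\setminus z$ and $b\setminus p$ of the resulting term contain gaps of the input terms; hence the width condition propagates and the isomorphisms of Theorem~\ref{thm_Ana_orderred_gen} are available on all these terms for all three reductions.

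Finally, for each $4$-tuple satisfying the width condition, composing the four isomorphisms along the zigzag --- from $C_\ast(G \times_{\tau_0} (M \times_{\tau_1} B))$ down through $C_*(G) \otimes_{t_0} C_* (M \times_{\tau_1} B)$ and $C_* (G) \otimes_{t_0} (C_* (M) \otimes_{t_1} C_* (B))$, then up through $\hat{C} G_* \otimes_{t_0} (\hat{C} M_* \otimes_{t_1}\hat{C} B_*)$ via $\rho'_1$ and down to $DG_* \otimes_{t_0} (DM_* \otimes_{t_1} DB_*)$ via $\rho'_2$ --- produces the desired isomorphism of spectral system terms. Each constituent map is induced by the $f$-component of a reduction (or its inverse), which is a chain map, so the composite commutes with the differentials between terms of the spectral systems, and therefore with the homology computations, natural isomorphisms and extensions that make up a connection. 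Since reductions induce canonical isomorphisms on homology, $H\big(DG_* \otimes_{t_0} (DM_* \otimes_{t_1} DB_*)\big)\cong H\big(C_\ast(G \times_{\tau_0} (M \times_{\tau_1} B))\big)$, so the two spectral systems reach the same limit and agree from the $2$-page on.
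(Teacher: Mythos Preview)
Your proof is correct and follows essentially the same approach as the paper's: apply Corollary~\ref{coro:thm_Ana_h_comp} to $\TEZ_0$ and Theorem~\ref{thm_Ana_orderred_gen} to the remaining three reductions, checking for the $2$-page indices $z^*(P;2),s(P;2),p(P;2),b^*(P;2)$ that the homotopy shifts $(1,0)$ and $(0,1)$ from Propositions~\ref{prop:rdct2-compatible}, \ref{prop:maps_rho'1}, \ref{prop:maps_rho'2} satisfy $h(F_z)\subseteq F_s$ and $h(F_p)\subseteq F_b$, then composing along the zigzag. Your explicit verification via $\varphi_2$ and $\le_{\mathrm{lex}}$, your discussion of propagation to terms following the $2$-page, and your remark on compatibility with differentials are all welcome elaborations on points the paper treats more briefly (the latter two in Remarks~\ref{rem:mainthm} and~\ref{rem:mainthm2}), but the argument is the same.
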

\begin{proof} 
Consider the diagram~(\ref{eq:2crpr-efhm}) and the filtrations $\{F_{p(P;2)}\}$ we defined on all its chain complexes.
We use the results of this section and Theorem~\ref{thm_Ana_orderred_gen} 
to show that corresponding terms of the $2$-pages of all the chain complexes involved in diagram~(\ref{eq:2crpr-efhm}) are isomorphic.

We start from the reduction $\TEZ_0$, which by Proposition~\ref{prop:rdct1-compatible} is compatible with the filtrations. Corollary~\ref{coro:thm_Ana_h_comp} then tells us that the higher spectral sequences associated with the filtered chain complexes $C_\ast(G \times_{\tau_0} (M \times_{\tau_1} B))$ and $C_*(G) \otimes_{t_0} C_* (M \times_{\tau_1} B)$ are isomorphic.

Consider now the reduction $\id \otimes_{t_0} \TEZ_1$, whose behavior is described in Proposition~\ref{prop:rdct2-compatible}. Recall that the terms $S^*(P;2)$ of the $2$-page of a higher spectral sequence are of the form $S[z,s,p,b]$ with
\begin{equation} \label{eq:recap-2page}
z \= T^2_{P+e_{1}-2e_2} , \quad
s\= T^2_{P+e_{1}-e_2}  , \quad  
p \= T^2_{P}  , \quad    
b \= T^2_{P+e_2} ,  
\end{equation}
where $e_1=(1,0)$ and $e_2=(0,1)$, see equation~(\ref{eq:TP_k}). In order to apply Theorem~\ref{thm_Ana_orderred_gen}, we have to remember the conditions~(\ref{eq.incl_gen_sameF}) on the homotopy $h$ of the reduction which guarantee we have isomorphic terms:
\begin{equation*} \label{eq.incl_gen_sameF2}
h(F_z) \subseteq F_s  \qquad \text{ and } \qquad
h(F_p)  \subseteq F_b   .
\end{equation*} 
From~(\ref{eq:recap-2page}) we see that for terms of the $2$-page the downset $s$ (resp. $b$) is a translation by $e_2=(0,1)$ of the downset $z$ (resp. $p$). We know from Proposition~\ref{prop:rdct2-compatible} that, for each $P\in \Zset^2$,
\begin{equation} \label{eq:double_incl}
    h_{\id \otimes_{t_0} \TEZ_1} (F_{p(P;2)}) \subseteq F_{p(P+(1,0);2)} \subseteq F_{p(P+(0,1);2)} ,
\end{equation}  
where the last inclusion is an immediate consequence of the \textquotedblleft shape\textquotedblright\ of the downsets we are considering. The homotopy $h_{\id \otimes_{t_0} \TEZ_1}$ satisfies therefore the conditions~(\ref{eq.incl_gen_sameF}) when $z,s,p,b$ are as in~(\ref{eq:recap-2page}), allowing us to conclude (by Theorem~\ref{thm_Ana_orderred_gen}) that the $2$-pages of the higher spectral sequences associated with $C_*(G) \otimes_{t_0} C_* (M \times_{\tau_1} B)$  and  $C_* (G) \otimes_{t_0} (C_* (M) \otimes_{t_1} C_* (B))$  are isomorphic.

Thanks to Proposition~\ref{prop:maps_rho'1} and Proposition~\ref{prop:maps_rho'2} we can apply the argument we just illustrated for $\id \otimes_{t_0} \TEZ_1$ to the reductions $\rho'_1$ and $\rho'_2$ and conclude that, from the $2$-page, the higher spectral sequences associated with the chain complexes
$C_* (G) \otimes_{t_0}( C_* (M) \otimes_{t_1} C_* (B))$, $\hat{C}G_* \otimes_{t_0} (\hat{C}M_* \otimes_{t_1} \hat{C}B_*)$ and $DG_* \otimes_{t_0} (DM_* \otimes_{t_1} DB_*)$ are isomorphic.

Let us finish the proof by remarking that the isomorphism between the different terms of the higher spectral sequences of both chain complexes $C_\ast(G \times_{\tau_0} (M \times_{\tau_1} B))$ and $DG_* \otimes_{t_0} (DM_* \otimes_{t_1} DB_*)$ is induced in a natural way by the composition of maps $f$ and $g$ of reductions of diagram~(\ref{eq:2crpr-efhm}).
\end{proof}

\begin{remark} \label{rem:mainthm}
In the proof of Theorem~\ref{thm:2crpr-2page_S_effective} we use the fact  (for example when we make use of Proposition~\ref{prop:maps_rho'1} and Proposition~\ref{prop:maps_rho'2}) that the homotopy $h$ of a certain reduction of the diagram~(\ref{eq:2crpr-efhm}) has a \textquotedblleft convenient\textquotedblright\ behavior, sending a chain subcomplex of the filtration indexed by a downset $p(P;2)$ to the chain subcomplex indexed by the same downset translated by $e_2=(0,1)$. This ensures that the conditions~(\ref{eq.incl_gen_sameF}) of Theorem~\ref{thm_Ana_orderred_gen} are satisfied for the terms $S[z,s,p,b]$ of the $2$-page, since we noticed from~(\ref{eq:recap-2page}) that the downset $s$ (resp. $b$) is a translation by $e_2=(0,1)$ of the downset $z$ (resp. $p$).
 In Figure~\ref{fig:15_2pageHomotopy2} we give an intuitive representation of this property of the $2$-page. Note that for the pages \ql preceding\qr\ the $2$-page in the secondary connection this is not true in general, as one can observe in Figure~\ref{fig:15_1h1pageHomotopy}.  
\end{remark}

\begin{remark} \label{rem:mainthm2}  
It is an easy consequence of the results we presented, relying on the conditions~(\ref{eq.incl_gen_sameF}) of Theorem~\ref{thm_Ana_orderred_gen}, that our method based on effective homology allows to compute also terms \ql following\qr\ the $2$-page, since by this we mean terms $S[z,s,p,b]$ such that the differences $s \setminus z$ and $b \setminus p$ are larger than for terms of the $2$-page (see \cite[\S~3]{matschke2013successive}).
\end{remark}

\begin{figure}[ht]
\centering
\includegraphics[scale=0.6]{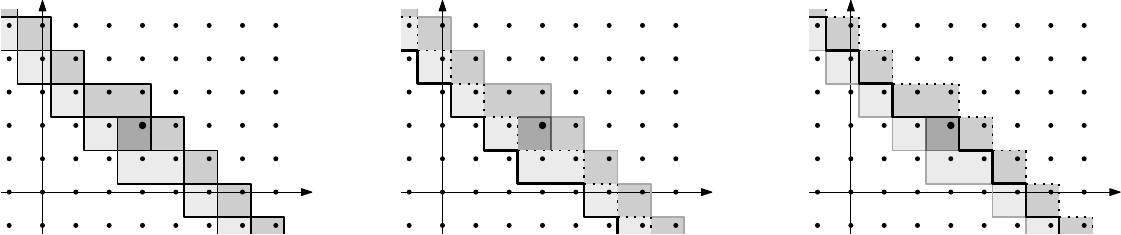}
\caption{The three figures represent the term $S^*(P;2)$ of the $2$-page of a higher spectral sequence over $D(\Z^2)$, with $P=(3,2)$. Left: representation of $S^*(P;2)$ with our usual conventions. Center: the downsets $z$ (black solid line) and $s$ (dotted line) are highlighted. Right: the downsets $p$ (solid) and $b$ (dotted) are highlighted.}
\label{fig:15_2pageHomotopy2}
\end{figure}
\begin{figure}[ht]
\centering
\includegraphics[scale=0.6]{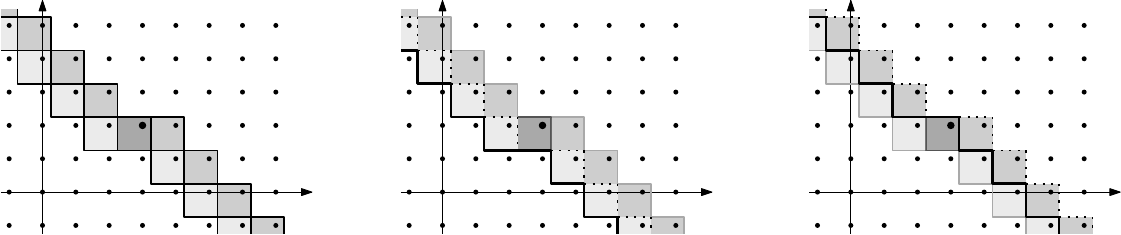}
\caption{Representations of the term $S(P;2)$ of a higher spectral sequence over $D(\Z^2)$, with $P=(3,2)$. Left: $S(P;2)$ represented with our usual conventions. Center: we can notice that the downset $z$ (black solid line) shifted by $(0,1)$ is not contained in the downset $s$ (dotted line). Right: the same situation holds for the downsets $p$ (solid) and $b$ (dotted).}
\label{fig:15_1h1pageHomotopy}
\end{figure}

The results presented in the current section allowed us to develop the following algorithm, which we implemented in the Kenzo system.

\begin{algorithm}
\label{alg:serre_2fibrations}
Computation of the higher Leray--Serre spectral sequence. \\
\emph{Input:}
\begin{itemize}
\item a tower of two fibrations $G\rightarrow E \rightarrow N$ and $M\rightarrow N \rightarrow B$ as in (\ref{eq:tower-of-2-fibrations}), defined by twisting operators $\tau_0: N \rightarrow G$ and $\tau_1:B \rightarrow  M$, where $M$ and $B$ are $1$-reduced.
\item equivalences as in (\ref{eq:BGequiv_gen_m2}), where $DG_\ast$, $DM_\ast$ and $DB_\ast$ are effective chain complexes.
\end{itemize}
\emph{Output:}
all the groups and differential maps of the higher Leray--Serre spectral sequence associated with the tower of fibrations, from the $2$-page of the secondary connection on.
\end{algorithm}

\begin{remark}
\label{rmk:higher_spseq_2}
As we mentioned in Section \ref{sec:SSys_over_downsets}, secondary connections are only one of the possible ways to connect the $1$-page to $H_*(E)$. The richer structure presented in \cite{Matschke2014a}, consisting in more terms and differentials for higher spectral sequences, provides a wide family of connections from the $1$-page to $H_*(E)$.
We conclude this section with a brief explanation of the following facts:
\begin{enumerate}[(i)]
\item Our simplicial construction of the higher Leray--Serre spectral sequence extends to the additional structure of \cite{Matschke2014a} for all terms and differentials following the $2$-page.
\item The technique of effective homology cannot be used to compute all terms and differentials of \cite{Matschke2014a}. The secondary connection we presented in Section \ref{sec:SSys_over_downsets}, based on downsets of the form $T^2_P$, appears to be ideal from this computational viewpoint.
\end{enumerate}
Despite our effort to make this work self-contained, for the sake of brevity we have to  refer the reader to \cite{Matschke2014a} for the notations and definitions we use in this remark.

To prove (i), recall that the simplicial version of the higher Leray--Serre spectral sequence rests on Definition \ref{defn:2crpr-filtration}, which introduces a filtration $\{ F_{p(P;2)} C_*(E)\}$ by assigning the bidegree  $X=(x_1,x_2)\=(\deg(m,b)-\deg b, \deg b)$ to the elements $(g,(m,b)) \in E \= G \times_{\tau_0} (M \times_{\tau_1} B)$ and setting $F_{p(P;2)} C_*(E)\= \bigoplus_{X\in p(P;2)} C_X$, where $C_X$ is generated by all elements of bidegree $X$. A term $S(P;\omega)$ of \cite{Matschke2014a} can be represented by downsets $z_{\omega},s_{\omega},p_{\omega},b_{\omega} \in D(\Z^2)$, meaning that $S(P;\omega) \cong S[z_{\omega},s_{\omega},p_{\omega},b_{\omega}]$, see \cite[Proof of Thm. 3.6]{Matschke2014a}. We show now that $F_{p_{\omega}}C_*(E) \= \bigoplus_{X\in p_{\omega}} C_X$ defines a chain subcomplexes of $C_*(E)$ (the case of $z_{\omega},s_{\omega},b_{\omega}$ is completely analogous) for each term  $S(P;\omega)$ following the $2$-page, which means $\omega = 12 * \tau$ with $\tau \in L^*_a$; together with the differentials naturally induced by the differential of $C_*(E)$, these terms constitute therefore a simplicial version of the additional structure of the higher Leray--Serre spectral sequence. Assuming $P=0$ to simplify the notations, the downset $p_{\omega}$ is defined by $p_{\omega}\= \{ X\in \Z^2 \mid M_{\omega} X \le_{\text{lex}} 0 \}$ (in this context, it is not restrictive to assume the permutation $\sigma$ involved in the definition of $p_{\omega}$ in \cite{Matschke2014a} to be the identity). From $\omega = 12 * \tau$ we obtain $M_{\omega} = M_{\tau} M_{12}$, where $M_{\omega} , M_{\tau} , M_{12} \in \Z^{2\times 2}_{\ge 0}$ (all entries of the three matrices are non-negative integers). Now look back at the proof of Proposition \ref{prop:Filtr_TwCar}. The differential of $C_*(E)$ sends an element $\sigma \= (g,(m,b))\in E$ of bidegree $(p_1,p_2)$ to a linear combination of elements of bidegree $(x_1,x_2)$ such that $x_1 + x_2 \le p_1 + p_2$ and $x_2 \le p_2$, which means that (\ref{eq:xpv}) holds: $(x_1,x_2)=(p_1,p_2) + \lambda_1 v_1 + \lambda_2 v_2$ for some $\lambda_1, \lambda_2 \in \Z_{\ge 0}$, with $v_1 \= -e_1$ and $v_2 \= e_1 -e_2$. Therefore, we only need to prove that the translation of $p_{\omega}$ by $v_i$ ($i=1,2$), given by $p_{\omega} + v_i \= \{ X\in \Z^2 \mid M_{\omega} X \le_{\text{lex}} M_{\omega} v_i \}$, is still contained in $p_{\omega}$.  As it can be easily shown (either directly or using the properties of $M_{\omega}$ described in \cite{Matschke2014a}), $M_{12} v_i \in \Z^2_{\le 0}$ for $i=1,2$. This implies that $M_{\omega} v_i \in \Z
^2_{\le 0}$, and in particular $M_{\omega} v_i \le_{\text{lex}} 0$, so we have $p_{\omega} + v_i \subseteq p_{\omega}$, for $i=1,2$.

To prove (ii), we provide a counterexample based on the fact that, in order to use effective homology, one must be able to define filtrations (of chain subcomplexes) on all the chain complexes of (\ref{eq:2crpr-efhm}). Here we focus on the chain complex $C_*(G)\otimes_{t_0} (C_*(M)\otimes_{t_1}C_*(B))$, filtered using the bidegree $(x_1,x_2)$ introduced in Definition \ref{defn:2tnpr-filtration}. Recall that the behavior of the differential of $C_*(G)\otimes_{t_0} (C_*(M)\otimes_{t_1}C_*(B))$ is summarized in Figure \ref{fig:T2_Filtr_Tensor}.  In Figure \ref{fig:S_omega1212} we show that for the word $\omega = 1212$ the downset $p_{\omega}$ is not closed with respect to the  differential of $C_*(G)\otimes_{t_0} (C_*(M)\otimes_{t_1}C_*(B))$.
\end{remark}
\begin{figure}[ht]
\centering
\includegraphics[scale=0.6]{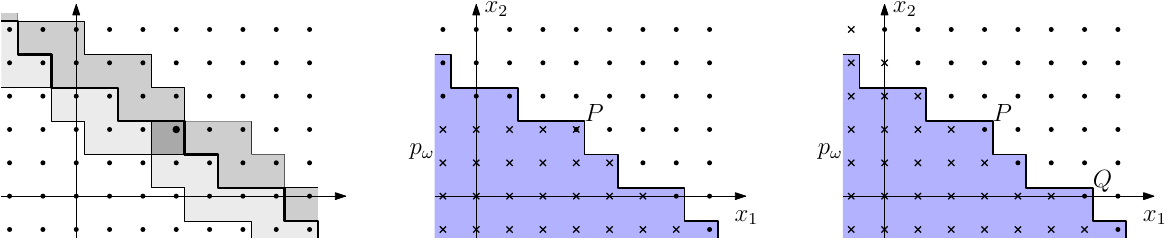}
\caption{Representation of the term $S(P;\omega)$, as defined in \cite{Matschke2014a}, with $P=(3,2)$ and $\omega = 1212$. Left: the term is represented as $S(P;\omega)\cong S[z_{\omega}, s_{\omega}, p_{\omega}, b_{\omega}]$, see Remark \ref{rmk:higher_spseq_2}. The downset $p_{\omega}$ is highlighted with a thicker line. Center: like in (i) of Remark \ref{rmk:higher_spseq_2}, we focus on the downset $p_{\omega}$ and the point $P$. Starting from position $P$, the differential of $C_*(E)$ can only reach the positions marked by $\times$ (compare with Figure \ref{fig:T2_Filtr_TwCar}), which still belong to $p_{\omega}$. This holds starting from each point of $p_{\omega}$. Right: as we claimed in (ii) of Remark \ref{rmk:higher_spseq_2}, starting from position $Q=(6,0)\in p_{\omega}$, the differential of $C_*(G)\otimes_{t_0} (C_*(M)\otimes_{t_1}C_*(B))$ can reach the positions marked by $\times$ (compare with Figure \ref{fig:T2_Filtr_Tensor}), some of which (for example $(0,4)$ and $(-1,5)$) do not belong to $p_{\omega}$.}
\label{fig:S_omega1212}
\end{figure}

\section[Sketch of results in the general case of \emph{m} fibrations]{Sketch of results in the general case of $m$ fibrations}
\label{sec:m_fibrations}

In this section we briefly explain the generalization of our results of Sections~\ref{sec:serre_spectral_systems} and~\ref{sec:efhm_for_serrespectralsystems} to the case of any finite number of fibrations. Let us consider again a diagram of~$m$ fibrations where the total space of each fibration is the base of the previous one:

\begin{equation}
\begin{tikzcd}
\label{eq:tower_of_fibrations2} 
G_0 \arrow{r}{}  & E_0 \arrow{d}{} \\
\cdots  & \cdots \arrow{d}{} \\
G_{m-1} \arrow{r}{}  & E_{m-1} \arrow{d}{} \\
 & B
\end{tikzcd}
\end{equation}

Like in the case of two fibrations, let us suppose that all the fibrations correspond to twisted Cartesian products, that is, $G_0, \ldots,  G_{m-1}$ are simplicial groups, $B$ is a simplicial set and each $E_i$ is defined as a twisted Cartesian product $E_i \= G_i \times_{\tau_i} E_{i+1}$, for all $0 \leq i \leq m-1$, with $\tau_i : E_{i+1} \rightarrow G_i$ and $E_m \= B$. We suppose also that the simplicial sets $G_1 ,\ldots ,G_{m-1},B$ are $1$-reduced and that $G_0,G_1 ,\ldots ,G_{m-1},B$ have effective homology, that is, there exist chain equivalences $C_* (G_i) \lrdc \hat{C} G_{i*} \rrdc  DG_{i*}$ for $0 \leq i \leq m-1$ and
$C_* (B) \lrdc \hat{C} B_* \rrdc  DB_*$ 
where all the $DG_{i*}$ and $DB_*$ are effective chain complexes. We remark that, as in the case of 2 fibrations, the base $B$ is not necessarily a Kan complex but the fibers $G_i$ are modeled as simplicial groups,
so their underlying simplicial sets are Kan complexes.

The effective homology of the top total space
\[ E_0 \= 
G_0 \times_{\tau_0} (G_1 \times_{\tau_1} \cdots \times_{\tau_{m-2}} (G_{m-1}\times_{\tau_{m-1}} B))
\]
is given by a composition of reductions that generalizes diagram (\ref{eq:2crpr-efhm}). To represent them in a diagram, let us introduce the following notations:
\begin{equation*}
\label{eq:notationC^i}
\begin{aligned}
C_*^{(0)} & \= C_*(G_0 \times_{\tau_0} (G_1 \times_{\tau_1} \cdots \times_{\tau_{m-2}} (G_{m-1}\times_{\tau_{m-1}} B))),     \\
C_*^{(1)} & \= C_*(G_0)  \otimes_{t_0} C_*(G_1 \times_{\tau_1} \cdots \times_{\tau_{m-2}} (G_{m-1}\times_{\tau_{m-1}} B)),   \\
C_*^{(2)} & \= C_*(G_{0}) \otimes_{t_0} (C_*(G_1)\otimes_{t_1} C_*(G_2 \times_{\tau_2} \cdots \times_{\tau_{m-1}} B)),    \\
 & \cdots    \\
C_*^{(m)} & \= C_* (G_0) \otimes_{t_0} (\cdots \otimes_{t_{m-2}} (C_* (G_{m-1})\otimes_{t_{m-1}} C_* (B))),    \\
\hat{C}_* & \=  \hat{C}G_{0*} \otimes_{t_0} (\cdots \otimes_{t_{m-2}} (\hat{C}G_{m-1,*} \otimes_{t_{m-1}} \hat{C}B_*)), \\
D_* & \= DG_{0*} \otimes_{t_0} (\cdots \otimes_{t_{m-2}} (DG_{m-1,*}\otimes_{t_{m-1}}  DB_*)). 
\end{aligned}
\end{equation*}
Then, the effective homology of $E_0$ is given by:
\begin{equation}
\label{eq:mcrpr-efhm}
\xymatrix @R=4.0mm @C=10.0mm{
C_*^{(0)}\ar @{=>>}_{\TEZ_0} [d]  &  &  \\
C_*^{(1)} \ar @{=>>}_{\id \otimes_{t_0} \TEZ_1} [d]  &  &  \\
C_*^{(2)}   \ar @{=>>}_{\id \otimes_{t_0} (\id \otimes_{t_1} \TEZ_2)} [d]  &  &  \\
\cdots \ar @{=>>}_{\id \otimes_{t_0} (\id \otimes_{t_1} \cdots \otimes_{t_{m-2}}\TEZ_{m-1})} [d]  &  \hat{C}_*  \ar @{=>>}_{\rho'_1} [ld] \ar @{=>>}^{\rho'_2} [rd] &  \\
   C_*^{(m)}  & & D_*
}
\end{equation}


The first reduction $\TEZ_0$ is simply the twisted Eilenberg--Zilber reduction (Theorem~\ref{thm:twistedEZ}) of the top fibration $G_0 \rightarrow E_0 \rightarrow E_1$. The second one $\id \otimes_{t_0} \TEZ_1$ is obtained by applying the Basic Perturbation Lemma (Theorem~\ref{thm:bpl}) to the tensor product (Proposition~\ref{prop:red_tensor}) of the trivial reduction of $C_\ast(G_0)$ and the twisted Eilenberg--Zilber reduction for the second fibration $G_1 \rightarrow E_1 \rightarrow E_2$. In general, the reductions of the type $\id \otimes_{t_0} \cdots \otimes_{t_{i-1}} \TEZ_i$ are defined by applying $i$ times the BPL (with the perturbations represented by $t_0, \ldots, t_{i-1}$) to the tensor product of the trivial reduction of $C_*(G_0)\otimes \cdots \otimes C_*(G_{i-1})$ and the twisted Eilenberg--Zilber reduction of the  fibration $G_i \rightarrow E_i \rightarrow E_{i+1}$. 
Lastly, the reduction $\rho'_1$ (resp. $\rho'_2$) is obtained by applying $m$ times the TPL (resp. BPL) to the tensor product of the $m+1$ reductions forming the equivalences that define the effective homology of $G_0, \ldots ,G_{m-1},B$.

\begin{remark}
\label{rmk:hyp1reduced_m}
As in the case of 2 fibrations (see Remark \ref{rmk:hyp1reduced}), the 1-reducedness assumption on $G_1 ,\ldots ,G_{m-1},B$ guarantees that we can apply the BPL and construct all the reductions in (\ref{eq:mcrpr-efhm}). After completing a first version of the present article, the authors discovered that towers of fibrations are studied in a simplicial setting in \cite{lambe1987applications}. The main result of that work corresponds, in our situation, to saying that $C^{(0)}_*$ and $D_*$ are chain equivalent, which is evident from the series of reductions in (\ref{eq:mcrpr-efhm}). The result is proven therein under weaker assumptions than in our situation, namely weaker conditions to ensure that the BPL can be applied repeatedly. We address the interested reader to the cited work, stressing once again the advantages of the 1-reducedness assumption from a computational perspective.
\end{remark}

In order to compute the higher Leray--Serre spectral sequence associated with $E_0 \= G_0 \times_{\tau_0} (G_1 \times_{\tau_1} \ldots \times_{\tau_{m-2}} (G_{m-1} \times_{\tau_{m-1}} B))$ we need to define generalized filtrations over the poset $D(\Zset^m)$ on all the chain complexes appearing in~(\ref{eq:mcrpr-efhm}), and we need to prove that all the reductions of that diagram have a \ql good\qr\ behavior with respect to the defined filtrations. To this aim, we use a natural generalization of Remark~\ref{rem:filtr_def} consisting in defining a \emph{multidegree} (over $\Z^m$) on the generators of each chain complex $C_*$, which produces a collection of abelian groups $\{C_P\}_{P \in \Z^m}$. Then, considering the downsets of the form $p(P;m)\= T^m_P$, a filtration $\{ F_{p(P;m)}\}_{P \in \Z^m}$ is defined as $F_{p(P;m)}\= \bigoplus_{X\in p(P;m)}C_X$. After proving that a filtration introduced in this way is compatible with the differential, that is, $d(F_{p(P;m)})\subseteq F_{p(P;m)}$, its definition can be extended to a filtration over $D(\Z^m)$ by setting, for any $p\in D(\Z^m)$,
\[ \overline{F}_p \= \{ \sigma \in C_* \mid \exists P \in \Z^m \mbox{ such that } \sigma \in F_{p(P;m)} \mbox{ and } p(P;m) \subseteq p \}.
\]

\begin{definition}
\label{def:filtr_m_fibr}
Consider the chain complex $C_\ast^{(i)}$ on the left in diagram~(\ref{eq:mcrpr-efhm}), that is
\[C_\ast^{(i)}\= C_*(G_{0}) \otimes_{t_0} (C_*(G_1)\otimes_{t_1} \cdots \otimes_{t_{i-1}} C_*(G_{i} \times_{\tau_i} \cdots \times_{\tau_{m-1}} B)) ,
\]
and let $\sigma \= g_0 \otimes( g_1 \otimes( \ldots \otimes (g_i,(g_{i+1}, \ldots, (g_{m-1}, b)))))$ be a generator of $C_\ast^{(i)}$. Denote $x'_m\=\deg(b)$; $x'_j\= \deg(g_j,(g_{j+1},\ldots,(g_{m-1},b))) $ for $i\leq j \leq m-1$; and $x'_j$ the integer such that $g_j \otimes( g_{j+1} \otimes( \ldots , b)) $ is a chain of dimension $x'_j$ in $C_{\ast}(G_j) \otimes_{t_j}( \cdots \times_{\tau_{m-1}} B))$ for $0 \leq j \leq i-1$. Then we set $x_m\=x'_m$ and $x_j\=x'_j-x'_{j+1}$ for $j\leq m-1$, we define the multidegree of $\sigma$ as  $X\=X_{\sigma}\=(x_1,\ldots , x_m)$ and we define $F_{p(P;m)}$ as the free $\Z$-module generated by the elements $\sigma$ of $C_\ast^{(i)}$ satisfying $X_{\sigma}\in p(P;m)$.
\end{definition}

The cases $i=0$ and $i=m$ in Definition \ref{def:filtr_m_fibr}, clearly corresponding  to the top and bottom-left chain complexes of diagram~(\ref{eq:mcrpr-efhm}), show that this definition represents a generalization of Definitions~\ref{defn:2crpr-filtration} and~\ref{defn:2tnpr-filtration}, as well as of Definition~\ref{defn:crpr-tnpr-filtration}. 
The proof that this definition produces valid filtrations is not included here, as it is an immediate generalization of the case of two fibrations we detailed in Sections~\ref{sec:serre_spectral_systems} and~\ref{sec:efhm_for_serrespectralsystems}. 
Moreover, the definition of the filtration for the particular bottom chain complex $C_* (G_0) \otimes_{t_0} (\cdots \otimes_{t_{m-2}} (C_* (G_{m-1})\otimes_{t_{m-1}} C_* (B)))$ can be easily adapted to the chain complexes  $\hat{C}G_{0*} \otimes_{t_0} (\cdots \otimes_{t_{m-1}} \hat{C}B_*)$ and $DG_{0*} \otimes_{t_0} (\cdots \otimes_{t_{m-1}}  DB_*)$.

\begin{remark}
\label{rmk:higher_spseq_m}
As in the case of towers of $2$ fibrations, the simplicial construction of the higher Leray--Serre spectral sequence for $m$ fibrations, based on the filtration we defined on $C^{(0)}_* = C_*(E_0)$, extends to the additional structure introduced in \cite{Matschke2014a} for all terms and differentials following the $2$-page. The argument of Remark \ref{rmk:higher_spseq_2} (i) can be easily generalized. 
\end{remark}

We studied the behavior of the reductions appearing in diagram~(\ref{eq:mcrpr-efhm}) with similar arguments to the ones presented in Section~\ref{sec:efhm_for_serrespectralsystems}. 
Once again, the proofs of the results about these reductions are an easy generalization of the case of two fibrations, only involving more complicated bookkeeping to describe how the  differentials and their perturbations modify the different filtration multidegrees. For this reason, as mentioned, we have chosen to detail the proofs only for towers of two fibrations, and to devote this section to state the general results we obtained for towers of $m$ fibrations. 

We have proved that the reductions on the left in diagram~(\ref{eq:mcrpr-efhm}) behave as follows. 

\begin{proposition}
\label{prop:rhoi_filtr}
Let $\rho^i\=(f^i,g^i,h^i)$ denote the $i$-th left reduction of diagram~(\ref{eq:mcrpr-efhm}), $\rho^i: C_*^{(i)} \rrdc C_*^{(i+1)}$. The maps $f^i, g^i$ and $h^i$ behave as follows with respect to the filtrations defined on the chain complexes $C_*^{(i)}$ and $C_*^{(i+1)}$: the maps $f^i$ and $g^i$ are compatible with the defined filtrations, while $h^i (F_{p(P;m)}) \subseteq F_{p(P+e_i;m)}$, for each $P\in \Z^m$ (with the convention $e_0\= 0$).
\end{proposition}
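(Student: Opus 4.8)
The plan is to reduce the statement to bookkeeping, generalizing coordinate by coordinate the arguments used in the two-fibration case for $\TEZ_0$ and $\id\otimes_{t_0}\TEZ_1$ (Propositions~\ref{prop:rdct1-compatible} and~\ref{prop:rdct2-compatible}). First I would reformulate the filtration. For a generator of $C_*^{(i)}$ let $x'_0\ge x'_1\ge\cdots\ge x'_m\ge0$ be the nested integers of Definition~\ref{def:filtr_m_fibr}, so that its multidegree is $X=(x_1,\dots,x_m)$ with $x_k=x'_k-x'_{k+1}$ for $k<m$ and $x_m=x'_m$. A telescoping sum gives $\sum_{k=j}^m x_k=x'_j$ for every $j$, hence $\varphi_m(X)=(x'_1,\dots,x'_m)$, and likewise $\varphi_m(P)=(p'_1,\dots,p'_m)$ with $p'_j\=\sum_{k=j}^m p_k$, so that
\[ X\in p(P;m)=T^m_P \iff (x'_1,\dots,x'_m)\le_{\text{lex}}(p'_1,\dots,p'_m).\]
Since the coordinatewise order refines $\le_{\text{lex}}$ and $\le_{\text{lex}}$ is transitive, it suffices to control how the maps of $\rho^i$ move the integers $x'_1,\dots,x'_m$; the integer $x'_0$ is the homological degree and does not enter the filtration. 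Note also that the partial sums of $P+e_i$ equal $p'_j+1$ for $j\le i$ and $p'_j$ for $j>i$.

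Next I would unwind the construction of $\rho^i$ from diagram~(\ref{eq:mcrpr-efhm}): it is the tensor product of the trivial reduction of $C_*(G_0)\otimes\cdots\otimes C_*(G_{i-1})$ with the twisted Eilenberg--Zilber reduction $\TEZ_i$ of $G_i\to E_i\to E_{i+1}$, followed by $i$ applications of the Basic Perturbation Lemma that reintroduce the Cartesian-product twists $t_{i-1},\dots,t_0$, from the innermost outwards. For $\TEZ_i$ alone, which operates only on the last \ql big\qr\ factor $C_*(E_i)$, the same inspection as in Proposition~\ref{prop:rdct1-compatible} shows that the Alexander--Whitney and Eilenberg--MacLane maps $f_{\TEZ_i}$, $g_{\TEZ_i}$ raise no $x'_k$, while the Shih map $h_{\TEZ_i}$ raises the degeneracy degree of the $E_i$-component, hence each of $x'_1,\dots,x'_i$, by at most $1$ and raises no other $x'_k$ (it applies faces before degeneracies on the $C_*(E_i)$ factor, so the degeneracy degrees of the $E_{i+1}$-and-finer components do not go up). Tensoring on the left with the trivial reduction changes nothing, the homotopy of the product being $\mathrm{Id}\otimes h_{\TEZ_i}$ by Proposition~\ref{prop:red_tensor}.

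Then I would control the $i$ Basic Perturbation Lemma steps. By the analogue of Proposition~\ref{prop:perturb_filtr}, the perturbation $\delta$ introduced when $t_j$ is reintroduced lowers the inner partial sum $x'_{j+1}$ by at least $2$ and raises none of $x'_{j+2},\dots,x'_m$; since $\delta$ lowers the homological degree $x'_0$ by $1$ and leaves the factors $C_*(G_0),\dots,C_*(G_{j-1})$ untouched, it lowers $x'_1,\dots,x'_j$ by exactly $1$ each, so it raises no $x'_k$. Composed with the relevant homotopy, which raises some $x'_k$ by at most $1$, it still strictly lowers $x'_{j+1}$; this both verifies the nilpotency hypothesis of Theorem~\ref{thm:bpl} and shows that in $\varphi=\sum_k(-1)^k(h\delta)^k$ and $\psi=\sum_k(-1)^k(\delta h)^k$ each term with $k\ge1$ sends a generator of multidegree $X$ to combinations whose first $j+1$ partial sums are $<_{\text{lex}}$ those of $X$, so that $\varphi$ and $\psi$ map $F_{p(Q;m)}$ into itself for every $Q$ (the $k=0$ term being the identity). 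Composing all the ingredients, $f^i$ and $g^i$ still raise no $x'_k$, hence are compatible with the filtrations; and $h^i$ — namely $h_{\TEZ_i}$ tensored with the identity, post-composed with the $\varphi$-operators of the $i$ steps — still raises each of $x'_1,\dots,x'_i$ by at most $1$ and raises no $x'_k$ with $k>i$, so a short lexicographic check, using that the partial sums of $P+e_i$ exceed those of $P$ exactly in positions $1,\dots,i$, yields $h^i(F_{p(P;m)})\subseteq F_{p(P+e_i;m)}$. For $i=0$ there are no Basic Perturbation Lemma steps and $h^0=h_{\TEZ_0}$ raises no $x'_k$, which is exactly the case $e_0=0$.

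The hard part is organizational rather than conceptual: one must attach each nested degree $x'_j$ to the tensor or Cartesian-product slot it records and then check, map by map and perturbation-term by perturbation-term, that it moves $x'_j$ only as claimed. The one place genuinely requiring attention is the benign behavior of the operators $\varphi,\psi$ attached to the twists $t_0,\dots,t_{i-1}$: unlike the situation of Proposition~\ref{prop:maps_rho'2}, where the perturbation fed into the Basic Perturbation Lemma could raise a filtration degree without bound, here the perturbations being reintroduced are precisely the Cartesian-product twists, which strictly lower an inner partial sum, so the tails of the perturbation series cannot spoil the estimates. With this understood, the proof is a routine (if lengthy) generalization of the two-fibration proofs, and the remaining bookkeeping may be omitted.
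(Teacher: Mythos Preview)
Your proposal is correct and takes essentially the same approach as the paper, which in fact does not give a proof of this proposition at all: the paper simply states that the argument is ``an easy generalization of the case of two fibrations, only involving more complicated bookkeeping'' and refers back to Propositions~\ref{prop:rdct1-compatible} and~\ref{prop:rdct2-compatible}. Your reformulation via the partial sums $x'_j$ and the identity $\varphi_m(X)=(x'_1,\dots,x'_m)$ is a clean way to organize precisely that bookkeeping, and your distinction between the benign BPL steps here (Cartesian-product twists, which strictly lower an inner $x'_{j+1}$) and the more delicate situation of Proposition~\ref{prop:maps_rho'2} is apt and in fact more explicit than anything the paper spells out.
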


Regarding the reductions $\rho'_1$ and $\rho'_2$ in diagram~(\ref{eq:mcrpr-efhm}), we have proved the following results.

\begin{proposition}
\label{prop:rho1_filtr}
Let $\rho'_1\=(f'_1,g'_1,h'_1)$ be the reduction as in diagram~(\ref{eq:mcrpr-efhm}), 
\[ \rho'_1: \hat{C}_* \= \hat{C}G_{0*} \otimes_{t_0} (\cdots \otimes_{t_{m-1}} \hat{C}B_*)  \rrdc C^{(m)}_* \= C_* (G_0) \otimes_{t_0} (\cdots \otimes_{t_{m-1}} C_* (B)) .
\]
The maps $f'_1$, $g'_1$ and $h'_1$ behave as follows with respect to the filtrations defined on the chain complexes $\hat{C}_*$ and $C^{(m)}_*$: the maps $f'_1$ and $g'_1$ are compatible with the defined filtrations, while $h'_1 (F_{p(P;m)}) \subseteq F_{p(P+e_m;m)}$, for each $P\in \Z^m$.
\end{proposition}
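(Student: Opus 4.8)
The plan is to argue exactly as in the proof of Proposition~\ref{prop:maps_rho'1} (which is the case $m=2$), isolating as the only new ingredient a combinatorial fact about the downsets $T^m_P$. Recall first how $\rho'_1$ is obtained: one starts from the tensor product of the $m+1$ reductions $\hat CG_{i*}\rrdc C_*(G_i)$ ($0\le i\le m-1$) and $\hat CB_*\rrdc C_*(B)$, and applies the TPL $m$ times to introduce the twisting perturbations corresponding to $t_0,\dots,t_{m-1}$. Since the TPL (Theorem~\ref{thm:tpl}) leaves the three maps of a reduction unchanged, $f'_1,g'_1,h'_1$ are precisely the maps of the tensor-product reduction, and iterating Proposition~\ref{prop:red_tensor} yields $f'_1=f^{(0)}\otimes\cdots\otimes f^{(m)}$, $g'_1=g^{(0)}\otimes\cdots\otimes g^{(m)}$ and
\[ h'_1=\sum_{j=0}^{m}\bigl(g^{(0)}f^{(0)}\bigr)\otimes\cdots\otimes\bigl(g^{(j-1)}f^{(j-1)}\bigr)\otimes h^{(j)}\otimes\id\otimes\cdots\otimes\id . \]

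By Definition~\ref{def:filtr_m_fibr} a generator $g_0\otimes(g_1\otimes(\cdots\otimes b))$ of $\hat C_*$ (and likewise of $C^{(m)}_*$) has multidegree $X=(x_1,\dots,x_m)$ with $x_\ell=\dim g_\ell$ for $1\le\ell\le m-1$ and $x_m=\dim b$, the dimension of the leading $\hat CG_{0*}$-factor being determined by the total degree and not recorded in $X$. Since every $f^{(i)},g^{(i)}$ is a chain map of degree $0$ it preserves the dimension of its factor, hence $f'_1$ and $g'_1$ preserve $X$ and are compatible with the filtrations $\{F_{p(P;m)}\}$. For $h'_1$, the summand with index $j$ raises the dimension of the $j$-th factor by $1$ and leaves the others untouched; thus the $j=0$ summand leaves $X$ unchanged, while for $1\le j\le m$ the $j$-th summand sends a generator of multidegree $X$ to a linear combination of generators of multidegree $X+e_j$. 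Consequently $h'_1$ sends a generator of multidegree $X$ into the span of generators whose multidegrees lie in $\{X+e_j:0\le j\le m\}$, with the convention $e_0\= 0$.

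It then remains to prove the combinatorial claim: if $X\in T^m_P$ then $X+e_j\in T^m_{P+e_m}$ for all $0\le j\le m$. Here I would use that $\varphi_m$ is a linear automorphism of $\Z^m$ and that $\le_{\text{lex}}$ is translation invariant, so that $X+e_j\in T^m_{P+e_m}$ is equivalent to $\varphi_m(X)\le_{\text{lex}}\varphi_m(P)+\varphi_m(e_m)-\varphi_m(e_j)$; since $\varphi_m(e_m)=(1,\dots,1)$ and $\varphi_m(e_j)=(1,\dots,1,0,\dots,0)$ with $j$ leading ones, the correction vector $\varphi_m(e_m)-\varphi_m(e_j)$ has $j$ leading zeros and $m-j$ trailing ones and is coordinate-wise non-negative, whence $\varphi_m(P)\le_{\text{lex}}\varphi_m(P)+\varphi_m(e_m)-\varphi_m(e_j)$ and the claim follows from $\varphi_m(X)\le_{\text{lex}}\varphi_m(P)$. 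This would give $h'_1(F_{p(P;m)})\subseteq F_{p(P+e_m;m)}$ and, for $m=2$, recover Proposition~\ref{prop:maps_rho'1}. I expect the only real obstacle to be the index bookkeeping — correctly matching each tensor-factor dimension to its multidegree coordinate and thereby confirming that $h'_1$ spreads precisely in the direction $e_m$ and not some other $e_j$ — which, as in the two-fibration case, is best made transparent by a picture of the type of Figure~\ref{fig:h_rho2}.
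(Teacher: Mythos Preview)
Your proposal is correct and takes essentially the same approach as the paper. The paper does not give a separate proof of this proposition; it simply declares it an easy generalization of Proposition~\ref{prop:maps_rho'1}, and your argument is exactly that generalization: the TPL leaves the maps of the tensor-product reduction unchanged, so $f'_1,g'_1$ preserve the multidegree and each summand of $h'_1$ shifts by some $e_j$, after which the containment $T^m_P+e_j\subseteq T^m_{P+e_m}$ finishes the proof. Your explicit verification of this last containment via the linearity of $\varphi_m$ and the coordinate-wise nonnegativity of $\varphi_m(e_m)-\varphi_m(e_j)$ is a welcome addition, since the paper only asserts the analogous fact for $m=2$ without justification.
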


\begin{proposition}
\label{prop:rho2_filtr}
Let $\rho'_2\=(f'_2,g'_2,h'_2)$ be the reduction  as in diagram~(\ref{eq:mcrpr-efhm}), 
\[ \rho'_2: \hat{C}_* \= \hat{C}G_{0*} \otimes_{t_0} (\cdots \otimes_{t_{m-1}} \hat{C}B_*)  \rrdc D_*\= DG_{0*} \otimes_{t_0} (\cdots \otimes_{t_{m-1}}  DB_*) .
\]
The maps $f'_2$, $g'_2$ and $h'_2$ behave as follows with respect to the filtrations defined on the chain complexes $\hat{C}_*$ and $D_*$: the maps $f'_2$ and $g'_2$ are compatible with the defined filtrations, while $h'_2 (F_{p(P;m)}) \subseteq F_{p(P+e_m;m)}$, for each $P\in \Zset^m$.
\end{proposition}

Now, recalling the definition of the downsets $p(P;k)\=T_P^k$ introduced in Section \ref{sec:SSys_over_downsets}, it is easy to observe that 
$$
p(P+e_i;m) \subseteq p(P+e_{i+1};m)\subseteq p(P+e_m;m), 
$$
for all $1 \leq i \leq m-1$.
Let us also recall that the terms $S^*(P;m)$ of the $2$-page of a higher spectral sequence are of the form $S[z,s,p,b]$, with
\begin{equation} \label{eq:recap-2page_m}
z \= T^m_{P+e_{m-1}-2e_m} , \quad
s\= T^m_{P+e_{m-1}-e_m}  , \quad  
p \= T^m_{P}  , \quad    
b \= T^m_{P+e_m}.  
\end{equation}
Therefore, the conditions of~Theorem~\ref{thm_Ana_orderred_gen}  guaranteeing that the terms of two spectral systems are isomorphic, that is $h(F_z) \subseteq F_s$ and $h(F_p) \subseteq F_b$, are satisfied for all the reductions of (\ref{eq:mcrpr-efhm}). In this way, we obtain the following general result which allows one to correctly apply the effective homology method to compute higher Leray--Serre spectral sequences of $m$ fibrations.

\begin{theorem}
\label{thm:mcrpr-2page_S_effective}
From the $2$-page of the secondary connection, the higher Leray--Serre spectral sequence of the chain complex $C_*(G_0 \times_{\tau_0} (G_1 \times_{\tau_1} \cdots \times_{\tau_{m-2}} (G_{m-1}\times_{\tau_{m-1}} B)))$ is isomorphic to the higher spectral sequence we defined via the diagram~(\ref{eq:mcrpr-efhm}) on the effective chain complex $DG_{0*} \otimes_{t_0} (DG_{1*} \otimes_{t_1}\cdots \otimes_{t_{m-2}} (DG_{m-1,*}\otimes_{t_{m-1}}  DB_*))$.
\end{theorem}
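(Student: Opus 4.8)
The plan is to repeat, in the $m$-fibration setting, the chaining argument from the proof of Theorem~\ref{thm:2crpr-2page_S_effective}. For each reduction $\rho=(f,g,h)$ appearing in diagram~(\ref{eq:mcrpr-efhm}) I would check that the hypotheses of Theorem~\ref{thm_Ana_orderred_gen} are met for every $4$-tuple $z\le s\le p\le b$ of the form~(\ref{eq:recap-2page_m}), i.e.\ for every term $S^*(P;m)$ of the $2$-page, and then compose the resulting $f$-induced isomorphisms along the diagram. The compatibility of $f$ and $g$ with the filtrations $\{F_{p(P;m)}\}$ of Definition~\ref{def:filtr_m_fibr} (and of its adaptations to $\hat C_*$ and $D_*$) is exactly the content of the three Propositions of this section, so the only thing left to verify is the pair of conditions $h(F_z)\subseteq F_s$ and $h(F_p)\subseteq F_b$ on the homotopies.

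To do this I would first record, from~(\ref{eq:recap-2page_m}), that for a $2$-page term the downset $s$ (respectively $b$) is the translate by $e_m$ of $z$ (respectively of $p$), together with the nesting $p(Q;m)\subseteq p(Q+e_i;m)\subseteq p(Q+e_m;m)$ for all $Q\in\Z^m$ and $0\le i\le m$ (with $e_0=0$), noted before the statement. For the left reductions $\rho^i=(f^i,g^i,h^i):C_*^{(i)}\rrdc C_*^{(i+1)}$, the corresponding Proposition gives $h^i(F_{p(Q;m)})\subseteq F_{p(Q+e_i;m)}$ for every $Q$; taking $Q=P+e_{m-1}-2e_m$ and $Q=P$ and then using the nesting to replace $e_i$ by $e_m$ gives $h^i(F_z)\subseteq F_s$ and $h^i(F_p)\subseteq F_b$ (for $i=0$ this reduces to $h^0$ being filtration-compatible). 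For $\rho'_1$ and $\rho'_2$, the corresponding Propositions give $h'_j(F_{p(Q;m)})\subseteq F_{p(Q+e_m;m)}$ for every $Q$, which immediately yields both required inclusions.

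With the hypotheses verified, applying Theorem~\ref{thm_Ana_orderred_gen} to each reduction of~(\ref{eq:mcrpr-efhm}) produces, for every $P\in\Z^m$, an isomorphism of the $S^*(P;m)$-terms of the two chain complexes it joins; since these isomorphisms are induced by the chain maps $f$, they commute with the induced differentials, so composing them along the zig-zag $C_*^{(0)}\to\cdots\to C_*^{(m)}\leftarrow\hat C_*\to D_*$ identifies the $2$-page of the Serre spectral system of $C_*^{(0)}=C_*(G_0\times_{\tau_0}(G_1\times_{\tau_1}\cdots\times_{\tau_{m-1}}B))$ with the $2$-page of the spectral system of $D_*=DG_{0*}\otimes_{t_0}(\cdots\otimes_{t_{m-1}}DB_*)$, compatibly with all differentials. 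The very same inclusions $h(F_z)\subseteq F_s$ and $h(F_p)\subseteq F_b$ hold a fortiori for every term $S[z,s,p,b]$ \ql following\qr\ the $2$-page (where $s\setminus z$ and $b\setminus p$ are even larger, cf.\ Remark~\ref{rem:mainthm2}), so the identification propagates to all such terms, which is the assertion.

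The main obstacle is the bookkeeping hidden inside the three Propositions: one must track how each differential, and each perturbation introduced via the TPL and the BPL in building~(\ref{eq:mcrpr-efhm}), acts on the $m$ filtration multidegrees, and --- as already in Proposition~\ref{prop:maps_rho'2} --- deal with the operators $\varphi$ and $\psi$ coming from the Basic Perturbation Lemma, whose summands of index $i>0$ may raise individual coordinates but strictly lower the total degree, so that effectively only the index-$0$ (identity) summand controls the filtration behaviour. Once those Propositions are granted, the proof of the theorem is the purely formal chaining above; the single new ingredient relative to the two-fibration case is the nesting $p(Q+e_i;m)\subseteq p(Q+e_m;m)$, which is precisely what lets the left-reduction homotopies satisfy the $2$-page conditions even though they shift the filtration index in a direction $e_i$ with $i<m$.
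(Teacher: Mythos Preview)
Your proposal is correct and follows essentially the same approach as the paper: use the three Propositions of Section~\ref{sec:m_fibrations} to obtain $h(F_{p(Q;m)})\subseteq F_{p(Q+e_i;m)}$ (respectively $F_{p(Q+e_m;m)}$), invoke the nesting $p(Q+e_i;m)\subseteq p(Q+e_m;m)$ together with the shape~(\ref{eq:recap-2page_m}) of the $2$-page indices to check the homotopy conditions of Theorem~\ref{thm_Ana_orderred_gen}, and then chain the resulting $f$-induced isomorphisms along the zig-zag of diagram~(\ref{eq:mcrpr-efhm}). The paper's own argument is precisely this, given in the paragraph preceding the theorem (with the Propositions taken as black boxes), so your write-up is in fact more detailed than the paper's.
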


This leads to the following algorithm, which has been implemented in the Kenzo system.

\begin{algorithm}
\label{alg:mcrpr}
Computation of the higher Leray--Serre spectral sequence. \\
\emph{Input:}
\begin{itemize}
\item a tower of $m$ fibrations $G_i\rightarrow E_i \rightarrow E_{i+1}$ with $0\leq i \leq m-1$ as in (\ref{eq:tower_of_fibrations2}), defined by twisting operators $\tau_i: E_{i+1} \rightarrow G_i$, where $G_1,\ldots ,G_{m-1},B$ are $1$-reduced.
\item chain equivalences $C_* (G_i) \lrdc \hat{C} G_{i*} \rrdc  DG_{i*}$, for each $0 \leq i \leq m-1$, and $C_\ast(B) \lrdc \hat{C} B_\ast \rrdc DB_\ast$, where the $DG_{i\ast}$ and $DB_\ast$ are effective chain complexes.
\end{itemize}
\emph{Output:}
all the groups and differential maps of the higher Leray--Serre spectral sequence associated with the tower of fibrations, from the $2$-page of the secondary connection on.
\end{algorithm}

\section[Study of the 2-page]{Study of the $2$-page}
\label{sec:second_page}

We devote this section to the generalization of Serre's formula  for the $2$-page of the spectral sequence of a fibration to the context of higher spectral sequences associated with towers of $m$ fibrations. The formula is proved by Matschke \cite[Theorem~5.1]{matschke2013successive} in a topological framework, as we have seen in the statement of Theorem~\ref{thm:Serre-Matschke}. Here we present a different proof in the simplicial framework we have adopted throughout this paper. The theoretical results of this section provide a simplicial version of the higher Leray--Serre spectral sequence, in a similar way to what happened with the ordinary Leray--Serre spectral sequence, introduced by Jean Pierre Serre in 1951~\cite{Ser51} and translated to a simplicial language only in 1962 by Shih Weishu~\cite{Shih1962}. 

The result we will prove is the following:
\begin{theorem}
\label{thm:Serre-Matschke-simpl}
Consider a tower of $m$ fibrations in the simplicial framework we introduced, as in~(\ref{eq:tower_of_fibrations2}), and suppose that the simplicial sets $G_1, \ldots ,G_{m-1},B$ are $1$-reduced. The terms of the $2$-page of the associated higher spectral sequence are
\begin{equation}
\label{eq:2-pageSerreMatschke-simpl}
S^*_n(P;m)\cong H_{p_m}(B;H_{p_{m-1}}(G_{m-1}; \ldots H_{p_{1}} (G_{1}; H_{p_0} (G_0)))),
\end{equation}
with $P\=(p_1,\ldots ,p_{m})\in \Z^m$ and $p_0\= n-p_1 -\cdots -p_m$, and the higher spectral sequence converges to $H_*(E_0)$.
\end{theorem}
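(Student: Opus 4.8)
The plan is to transport the computation to the iterated tensor product $C^{(m)}_*=C_*(G_0)\otimes_{t_0}(\cdots\otimes_{t_{m-1}}C_*(B))$ and then unwind the secondary connection one fibration at a time. First I would observe that, by the filtration--compatibility results for the left column of diagram~(\ref{eq:mcrpr-efhm}) (Proposition~\ref{prop:rdct1-compatible}, Proposition~\ref{prop:rdct2-compatible} and their generalizations in Section~\ref{sec:m_fibrations}) together with Theorem~\ref{thm_Ana_orderred_gen}, from the $2$-page on the Serre spectral system of $C_*(E_0)$ is isomorphic to the spectral system carried by $C^{(m)}_*$ with the filtration of Definition~\ref{def:filtr_m_fibr}; this part uses only Theorem~\ref{thm_Ana_orderred_gen} and the (twisted) Eilenberg--Zilber reductions, so it does not require $G_0,\ldots,B$ to possess effective homology. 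Thus it suffices to compute $S^*(P;m)$ for $C^{(m)}_*$, whose underlying graded module is $C_*(G_0)\otimes C_*(G_1)\otimes\cdots\otimes C_*(G_{m-1})\otimes C_*(B)$, filtered by the multidegree $X=(x_1,\ldots,x_m)=(\dim G_1,\ldots,\dim G_{m-1},\dim B)$ via the downsets $p(P;m)=T^m_P$. Writing its differential as $D=d^{\otimes}+\delta$, with $d^{\otimes}=\sum_{i=0}^{m}\pm\,\mathrm{Id}\otimes\cdots\otimes d_{G_i}\otimes\cdots\otimes\mathrm{Id}$ the tensor-product differential (setting $G_m\coloneqq B$) and $\delta$ the perturbations coming from the applications of the Basic Perturbation Lemma, the analysis behind Propositions~\ref{prop:F_tensor} and~\ref{prop:perturb_filtr} (and its $m$-fibration version) shows that each summand of $\delta$ changes the multidegree by a vector $v$ which is neither $0$ nor any $-e_k$, and which either has a positive entry or satisfies $\sum_j v_j\le -2$; for the $T^k_P$-filtration these are strictly higher-order terms, so $\delta$ contributes nothing to the pages of the secondary connection up to and including the $2$-page.

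Next I would compute the $1$-page and run the induction. By definition $S(P;1)=H\big(F_{p(P;1)}/F_{s(P;1)}\big)=H\big(F_{T^1_P}/F_{T^1_{P-e_1}}\big)$; the shape of $T^1_P$ makes this quotient equal to $C_*(G_0)\otimes C_{p_1}(G_1)\otimes\cdots\otimes C_{p_m}(B)$, and the only part of $D$ fixing all of $x_1,\ldots,x_m$ is $d_{G_0}\otimes\mathrm{Id}\otimes\cdots\otimes\mathrm{Id}$, so, the other factors being free $\Z$-modules, $S_n(P;1)\cong H_{p_0}(G_0)\otimes C_{p_1}(G_1)\otimes\cdots\otimes C_{p_m}(B)$ with $p_0=n-p_1-\cdots-p_m$. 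Now I would prove by induction on $1\le k\le m$ that
\[
S(P;k)\;\cong\;\big[H_{p_{k-1}}(G_{k-1};\,\ldots\,H_{p_1}(G_1;H_{p_0}(G_0)))\big]\otimes C_{p_k}(G_k)\otimes\cdots\otimes C_{p_m}(B),
\]
the differential $d\colon S(P;k)\to S(P-e_k;k)$ of Lemma~\ref{lem:differentials-e_k} being induced by the single summand $\pm\,\mathrm{Id}\otimes\cdots\otimes d_{G_k}\otimes\cdots\otimes\mathrm{Id}$ of $D$. The inductive step uses Lemmas~\ref{lem:differentials-e_k} and~\ref{lem:isoSS*}: taking homology in direction $-e_k$ and using that the trailing factors $C_{p_{k+1}}(G_{k+1}),\ldots,C_{p_m}(B)$ are flat, one gets $H_{p_k}\big(C_*(G_k)\otimes H_{p_{k-1}}(G_{k-1};\ldots)\big)\otimes C_{p_{k+1}}(G_{k+1})\otimes\cdots$, i.e.\ $H_{p_k}(G_k;H_{p_{k-1}}(G_{k-1};\ldots))\otimes C_{p_{k+1}}(G_{k+1})\otimes\cdots$, which the natural isomorphism of Lemma~\ref{lem:isoSS*} identifies with $S(P;k+1)$, transporting the induced differential to $\pm\,\mathrm{Id}\otimes\cdots\otimes d_{G_{k+1}}\otimes\cdots\otimes\mathrm{Id}$. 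For $k=m$ there are no trailing factors and one obtains $S^*_n(P;m)\cong H_{p_m}(B;H_{p_{m-1}}(G_{m-1};\ldots H_{p_0}(G_0)))$, which is~(\ref{eq:2-pageSerreMatschke-simpl}).

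For the last assertion I would check that the $D(\Z^m)$-filtration of $C_*(E_0)$ is bounded in each total degree: a generator of $C_n(E_0)$ has multidegree $X$ with $x_i\ge 0$ and $x_1+\cdots+x_m\le n$, so it ranges over a finite region of $\Z^m$; hence in degree $n$ only finitely many downsets $p(P;m)$ produce distinct subcomplexes, and the filtration is also exhaustive and Hausdorff. The general convergence criterion for spectral systems of bounded filtrations~\cite{matschke2013successive} then yields a connection from the $1$-page, in particular from the $2$-page obtained above, to $H(C_*(E_0))=H(E_0)$, which is exactly the simplicial counterpart of the convergence statement in Theorem~\ref{thm:Serre-Matschke}.

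The step I expect to be the main obstacle is the inductive identification of the differentials: showing, from the formula $S[z,s,p,b]=\big(F_p\cap d^{-1}(F_z)\big)/\big(d(F_b)+F_s\big)$ and the precise shape of the downsets $T^k_P$, that after the first $k-1$ homology steps the only piece of $D$ surviving as the $(-e_k)$-differential is $\pm\,\mathrm{Id}\otimes\cdots\otimes d_{G_k}\otimes\cdots\otimes\mathrm{Id}$ --- in particular that neither the perturbation terms in $\delta$ nor the ``carrying'' contributions that the downsets $T^k_P$ are designed to absorb leak into these differentials. This is the multicomplex bookkeeping alluded to in Section~\ref{sec:m_fibrations}, and it is precisely the reason one must work with the $\Z^m$-filtration given by the $T^k_P$ rather than with a single $\Z$-filtration.
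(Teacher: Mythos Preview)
Your approach is essentially the same as the paper's: reduce to the iterated twisted tensor product via the left column of diagram~(\ref{eq:mcrpr-efhm}), then compare with the non-twisted tensor product and run the secondary connection using the Universal Coefficient Theorem at each step. The paper organizes this slightly differently: it first computes the full secondary connection for the \emph{non-twisted} tensor product (Proposition~\ref{prop:notwist-2page}), where the filtration $F_p=\bigoplus_{X\in p}C_X$ is valid for every $p\in D(\Z^m)$ and all intermediate pages $S(P;k)$ are unambiguous, and only afterwards proves an abstract criterion (Proposition~\ref{prop:delta}, Corollary~\ref{cor:isopages}) saying that if $\delta(F_p)\subseteq F_z$ and $\delta(F_b)\subseteq F_s$ then $S[z,s,p,b]\cong S'[z,s,p,b]$, which it applies only to the terms $S^*(P;m)$ and $S(P;m)$ indexed by $T^m$-downsets (Proposition~\ref{prop:isopages2}). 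This sidesteps the issue you flag as your main obstacle: for the twisted chain complex the subgroups $F_{T^k_P}=\bigoplus_{X\in T^k_P}C_X$ with $k<m$ need not be subcomplexes, so the intermediate pages $S(P;k)$ are not literally available on $C^{(m)}_*$; the paper avoids ever forming them there and transfers only the $2$-page, whose four indices are all of the form $T^m_Q$.
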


By virtue of the successive reductions of diagram~(\ref{eq:mcrpr-efhm}) between the chain complexes $C_* (G_0 \times_{\tau_0} (\cdots \times_{\tau_{m-2}} (G_{m-1} \times_{m-1} B)))$ and $C_* (G_0) \otimes_{t_0} (\cdots \otimes_{t_{m-2}} (C_* (G_{m-1})\otimes_{t_{m-1}} C_* (B)))$, we consider the latter for proving our results in this section. As a first step, we show that the formula~(\ref{eq:2-pageSerreMatschke-simpl}) holds for non-twisted tensor products $C_* (G_0) \otimes \cdots \otimes C_* (G_{m-1})\otimes C_* (B)$.
Then, we generalize the results to the case of $(m+1)$-fold twisted tensor products of the form $C_* (G_0) \otimes_{t_0} (\cdots \otimes_{t_{m-2}} (C_* (G_{m-1})\otimes_{t_{m-1}} C_* (B)))$, pointing out at last why the reductions~(\ref{eq:mcrpr-efhm}) guarantee that our results are valid also for the chain complex $C_* (G_0 \times_{\tau_0} (\cdots \times_{\tau_{m-2}} G_{m-1} \times_{m-1} B)))$.

Let us show that the expression~(\ref{eq:2-pageSerreMatschke-simpl}) for the terms of the $2$-page holds for non-twisted tensor products. 
Recall that we are considering the filtration of Definition \ref{def:filtr_m_fibr}, but in this case, instead of considering only the downsets $p(P;m)$, we extend the definition to all $p(P;k)$, for $1\le k\le m$. This yields a valid filtration of $C_* \= C_* (G_0) \otimes \cdots \otimes C_* (G_{m-1})\otimes C_* (B)$, as one can easily check, since the tensor product is not twisted. This filtration $\{ F_{p(P;k)}\}$ can be shortly defined as
\begin{equation}
\label{eq:def_FpPk_ntw}
F_{p(P;k)}C_n\= \hspace{-10pt} \bigoplus_{\substack{i_0 + i_1 + \ldots +i_m=n, \\ (i_1,\ldots ,i_m)\in p(P;k)}} \hspace{-10pt} 
 C_{i_0} (G_0) \otimes C_{i_1} (G_1) \otimes \cdots \otimes C_{i_{m-1}} (G_{m-1}) \otimes C_{i_m} (B),
\end{equation}
and clearly we have, as usual, a higher spectral sequence associated with it.
Even if the next result is included in~\cite{guidolin2018thesis}, we provide here a sketch of the proof for the sake of completeness.
\begin{proposition}
\label{prop:notwist-2page}
Let $C_* (G_0) \otimes \cdots \otimes C_* (G_{m-1})\otimes C_* (B)$ be a (non-twisted) tensor product of chain complexes; consider the filtration $\{ F_{p(P;k)}\}$ introduced in (\ref{eq:def_FpPk_ntw})
and the associated higher spectral sequence. Then the terms of the $2$-page can be expressed as in~(\ref{eq:2-pageSerreMatschke-simpl}).
\end{proposition}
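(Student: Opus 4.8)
The plan is to trace the secondary connection of Section~\ref{sec:SSys_over_downsets} explicitly, from the $1$-page to the $2$-page, identifying each term $S(P;k)$ along the way and checking that at each stage the differential in direction $-e_k$ reduces to the differential of the single tensor factor $C_*(G_k)$.

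First I would compute the $1$-page. Since $p(P;1)=T^1_P$ and $s(P;1)=p(P;1)\setminus\{P\}$, the subquotient complex $F_{p(P;1)}/F_{s(P;1)}$ is spanned by the generators of multidegree exactly $P=(p_1,\dots,p_m)$, hence equals $C_*(G_0)\otimes C_{p_1}(G_1)\otimes\cdots\otimes C_{p_{m-1}}(G_{m-1})\otimes C_{p_m}(B)$. Among the summands $d_{G_0}\otimes\id\otimes\cdots$, $\pm\,\id\otimes d_{G_1}\otimes\id\otimes\cdots$, \dots, $\pm\,\id\otimes\cdots\otimes d_B$ of the tensor differential $d^{\otimes}$, only $d_{G_0}\otimes\id\otimes\cdots$ preserves the multidegree; each of the others lowers one of the coordinates $p_1,\dots,p_m$ and so vanishes in the subquotient. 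Therefore, using that the factors $C_{p_j}(G_j)$ and $C_{p_m}(B)$ are free abelian groups,
\[ S_n(P;1)\;\cong\;H_{p_0}(G_0)\otimes C_{p_1}(G_1)\otimes\cdots\otimes C_{p_{m-1}}(G_{m-1})\otimes C_{p_m}(B),\qquad p_0\=n-p_1-\cdots-p_m . \]

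The core of the argument is then an induction on $k=1,\dots,m$ proving that
\[ S_n(P;k)\;\cong\;H_{p_{k-1}}\!\big(G_{k-1};H_{p_{k-2}}(G_{k-2};\cdots H_{p_0}(G_0)\cdots)\big)\otimes C_{p_k}(G_k)\otimes\cdots\otimes C_{p_m}(B), \]
with $p_0+p_1+\cdots+p_m=n$ and the convention $G_m\=B$. The base case $k=1$ is the preceding paragraph. For the inductive step, assuming the identification for $k$, the key claim is that, under this identification, the differential $d\colon S(P;k)\to S(P-e_k;k)$ of Lemma~\ref{lem:differentials-e_k} becomes $\pm\,\id\otimes\cdots\otimes d_{G_k}\otimes\id\otimes\cdots$ acting on the factor $C_{p_k}(G_k)$. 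This is where the combinatorics of the downsets $T^k_P$ enters: the components of $d^{\otimes}$ moving in directions $-e_{k+1},\dots,-e_m$ fall, after their multidegree shift, inside the subcomplex $F_{s(P-e_k;k)}$ and are therefore killed in $S(P-e_k;k)$, whereas the components moving in directions $0,-e_1,\dots,-e_{k-1}$ have already been quotiented out when forming $S(P;k)$ and act as zero on its representatives. Granting the claim, $S^*(P;k)=\kr d/\im d'$ is the homology of a complex of the form $N\otimes C_*(G_k)\otimes F$ with differential $\id\otimes d_{G_k}\otimes\id$, where $N=H_{p_{k-1}}(G_{k-1};\cdots)$ and $F=C_{p_{k+1}}(G_{k+1})\otimes\cdots\otimes C_{p_m}(B)$ is a free abelian group; such a homology equals $H_{p_k}(G_k;N)\otimes F$. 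Applying the natural isomorphism $S^*(P;k)\cong S(P;k+1)$ of Lemma~\ref{lem:isoSS*} yields exactly the formula for $k+1$.

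Taking $k=m$ in the induction and performing the last homology step (direction $-e_m$, i.e.\ with respect to $d_B$) gives
\[ S^*_n(P;m)\;\cong\;H_{p_m}\!\big(B;H_{p_{m-1}}(G_{m-1};\cdots H_{p_0}(G_0)\cdots)\big),\qquad p_0\=n-p_1-\cdots-p_m , \]
which is~(\ref{eq:2-pageSerreMatschke-simpl}). The main obstacle is precisely the verification, at each level $k$, that the spectral-system differential in direction $-e_k$ coincides with the single tensor-factor map $d_{G_k}$ and that no ``higher'' component of $d^{\otimes}$ survives into $S(P-e_k;k)$: this is bookkeeping with the filtration quotients and the downsets $T^k_P$, a routine (if slightly tedious) generalization of the explicit two-fibration computations of Section~\ref{sec:efhm_for_serrespectralsystems}. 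The remaining ingredients — identifying the $1$-page subquotient and computing the homology of a complex $N\otimes C_*(G_k)\otimes F$ with $F$ free — are immediate.
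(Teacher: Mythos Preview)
Your proposal is correct and follows essentially the same route as the paper's proof: identify the $1$-page as $H_{p_0}(G_0)\otimes C_{p_1}(G_1)\otimes\cdots\otimes C_{p_m}(B)$, observe that the spectral-system differential in direction $-e_k$ reduces to the single tensor-factor differential $d_{G_k}$, and iterate through $k=1,\dots,m$ to reach~(\ref{eq:2-pageSerreMatschke-simpl}). The only cosmetic difference is that the paper invokes the Universal Coefficient Theorem where you appeal directly to freeness of the right-hand factors, and the paper leaves the iteration implicit (``iterating this argument'') where you set it up as a formal induction on $k$; neither proof spells out the downset bookkeeping showing the other components of $d^{\otimes}$ vanish, and your remark that this is routine matches the paper's level of detail.
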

\begin{proof}
Consider the homology groups of the form $H_n (F_{p(P;1)} /F_{s(P;1)})$, introduced in Section~\ref{sec:SSys_over_downsets} as the terms $S_n(P;1)$ forming the $1$-page of the higher spectral sequence, where $F_{p(P;1)} /F_{s(P;1)}$ denotes a subquotient of the chain complex $C_* (G_0) \otimes \cdots \otimes C_* (G_{m-1})\otimes C_* (B)$. Recall from (\ref{eq:TP_k}) that $s(P;1)\=p(P;1) \setminus \{ P\}$. As the set difference between the two posets $p(P;1)$  and $s(P;1)$ contains only $P\=(p_1,\ldots ,p_{m})\in \Z^m$, the homology group $H_n (F_{p(P;1)} /F_{s(P;1)})$ is isomorphic to the $n$-homology of
\[  C_{*-p_1 -\cdots -p_m}(G_0)\otimes C_{p_{1}}(G_1) \otimes \cdots \otimes C_{p_{m-1}}(G_{m-1})\otimes C_{p_m}(B),
\]
that is, by virtue of the \emph{Universal Coefficient Theorem} (see for example \cite[Theorem 11.1]{maclane1963homology}),
\begin{equation}
\label{eq:1pageS_homology}
H_{p_0}(G_0)\otimes C_{p_{1}}(G_1) \otimes \cdots \otimes C_{p_{m-1}}(G_{m-1})\otimes C_{p_m}(B), 
\end{equation}
with $p_0\= n-p_1 -\cdots -p_m$.
Recall now Lemma~\ref{lem:differentials-e_k} and note that taking homology in direction $-e_k$ corresponds to taking homology with respect to the differential $d_{G_k}\otimes \id \otimes \cdots \otimes \id$.
For example, the homology in direction $-e_1$ of~(\ref{eq:1pageS_homology}) is the $p_1$-homology of the chain complex
\[ H_{p_0}(G_0)\otimes C_{*}(G_1) \otimes C_{p_2}(G_2) \otimes \cdots \otimes C_{p_{m-1}}(G_{m-1})\otimes C_{p_m}(B),
\]
which is (applying again the Universal Coefficient Theorem)
\[  H_{p_1}(G_1 ;H_{p_0}(G_0)) \otimes C_{p_2}(G_2) \otimes \cdots \otimes C_{p_{m-1}}(G_{m-1})\otimes C_{p_m}(B) .
\]
It is now evident that iterating this argument $m$ times one obtains the formula~(\ref{eq:2-pageSerreMatschke-simpl}) for the terms of the $2$-page.
\end{proof}

Now we want to illustrate how the previous result can be used to study the case of twisted tensor products of chain complexes.

\begin{proposition}
\label{prop:delta}
Let $(C_*,d)$ and $(C_*,d')$ be two chain complexes having the same underlying graded abelian group $C_*$, but different differentials $d$ and $d'$.
Suppose furthermore that $\{ F_i\}_{i\in I}$ is an $I$-filtration (of chain complexes) for both $(C_*,d)$ and $(C_*,d')$, and denote the associated spectral systems with the letters $S$ and $S'$, respectively. Let $\delta \= d'-d$. If, for a $4$-tuple of indices $z\le s\le p\le b$ in $I$, the conditions
\begin{equation} \label{eq:cond_diff_isoterms}
\delta (F_p) \subseteq F_z   \qquad \text{ and } \qquad
\delta (F_b) \subseteq F_s   
\end{equation} 
hold, then $S[z,s,p,b]\cong S'[z,s,p,b]$.
\end{proposition}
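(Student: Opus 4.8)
The plan is to show that $S[z,s,p,b]$ and $S'[z,s,p,b]$ are not merely isomorphic but literally the \emph{same} subquotient of $C_*$, by proving that the numerator $F_p \cap d^{-1}(F_z)$ and the denominator $d(F_b)+F_s$ in the defining formula~(\ref{eq:S_term_def1}) are unchanged when $d$ is replaced by $d'$. The whole argument rests on the elementary observations that $d' = d+\delta$ and $d = d'+(-\delta)$, and that both $\delta$ and $-\delta$ satisfy the hypotheses~(\ref{eq:cond_diff_isoterms}) (since those conditions are stable under negation). Note that since $(C_*,d)$ and $(C_*,d')$ share the same underlying graded group and $\{F_i\}$ is a filtration for both, the subgroups $F_z, F_s, F_p, F_b$ appearing in the two formulas are literally the same.

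First I would compare the numerators. Given $a\in F_p$ with $d(a)\in F_z$, one computes $d'(a) = d(a)+\delta(a)$; since $a\in F_p$, the hypothesis $\delta(F_p)\subseteq F_z$ gives $\delta(a)\in F_z$, whence $d'(a)\in F_z$. This yields $F_p\cap d^{-1}(F_z)\subseteq F_p\cap (d')^{-1}(F_z)$, and interchanging the roles of $d$ and $d'$ (equivalently, of $\delta$ and $-\delta$) gives the reverse inclusion, so the two numerators coincide. This is of course read degree by degree, i.e.\ in total degree $n$ one intersects $F_pC_n$ with the preimage under $d_n$ of $F_zC_{n-1}$, but nothing changes in the argument.

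Next I would compare the denominators. For $x\in F_b$ one has $d'(x)=d(x)+\delta(x)$ with $\delta(x)\in \delta(F_b)\subseteq F_s$, so $d'(x)\in d(F_b)+F_s$; hence $d'(F_b)+F_s\subseteq d(F_b)+F_s$, and the symmetric argument (using $d = d'-\delta$ and $-\delta(F_b)\subseteq F_s$) gives equality. Combining the two comparisons, the fractions defining $S[z,s,p,b]$ and $S'[z,s,p,b]$ have identical numerators and identical denominators, hence the terms are equal, in particular isomorphic, which is the claim.

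There is essentially no obstacle here: the only point requiring a little care is the convention that $A/B$ denotes $A/(B\cap A)$, but since the numerator and denominator are the \emph{same} subgroups of $C_*$ in each total degree on both sides, this convention is applied identically in both cases. Let me also note that the hypothesis that $\{F_i\}$ is a filtration \emph{of chain complexes} for both $d$ and $d'$ is used only to ensure that the spectral systems $S$ and $S'$ are well-defined in the first place; it plays no further role in the comparison.
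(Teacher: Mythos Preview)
Your proof is correct and is precisely the argument the paper has in mind: the paper's own proof is the single sentence ``It follows from the definition of the terms of a spectral system as quotient modules,'' and what you have written is exactly the unpacking of that sentence, showing that both numerator and denominator in the defining formula are literally unchanged when $d$ is replaced by $d'$.
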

\begin{proof}
It follows from the definition (\ref{eq:S_term_def1}) of the terms of a spectral system as quotient modules.
\end{proof}

Notice that in the conditions (\ref{eq:cond_diff_isoterms}) the involved indices are paired together in a different way from 
 (\ref{eq.incl_gen_sameF}) of Theorem \ref{thm_Ana_orderred_gen}.

The next result concerns filtrations of the form $\{ F_{p(P;m)}\}$ and terms of the type $S(P;m)$ and $S^*(P;m)$, defined as in~(\ref{eq:defS,S*}) of Section~\ref{sec:SSys_over_downsets}.

\begin{corollary}
\label{cor:isopages}
In the situation of Proposition \ref{prop:delta}, suppose that the filtration of both $(C_*,d)$ and $(C_*,d')$ is of the form $\{F_{p(P;m)}\}_{P\in \Z^m}$.
 If $\delta \= d'-d$ is such that
\begin{equation}
\label{eq:cond_diff_pPk}    
\delta (F_{p(P;m)}) \subseteq F_{p(P+e_{m-1}-2e_m;m)}
\end{equation}
for $P\=(p_1,\ldots ,p_m)\in \Z^m$, then we have isomorphisms $S^*(P;m)\cong S'^*(P;m)$ and $S(P;m)\cong S'(P;m)$.
\end{corollary}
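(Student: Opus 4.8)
The plan is to deduce Corollary~\ref{cor:isopages} directly from Proposition~\ref{prop:delta} by verifying that the hypothesis~(\ref{eq:cond_diff_pPk}) implies the two inclusion conditions~(\ref{eq:cond_diff_isoterms}) for the relevant $4$-tuples of downsets. Recall from~(\ref{eq:recap-2page_m}) (or~(\ref{eq:TP_k})) that the term $S^*(P;m)=S[z^*,s,p,b^*]$ is built from the downsets
\[
z^*=T^m_{P+e_{m-1}-2e_m}, \quad s=T^m_{P+e_{m-1}-e_m}, \quad p=T^m_P, \quad b^*=T^m_{P+e_m},
\]
and that $S(P;m)=S[z,s,p,b]$ uses $z=T^m_{P-e_m}$, the same $s$ and $p$, and $b=T^m_{P+e_{m-1}}$. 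So for the first inclusion in~(\ref{eq:cond_diff_isoterms}) applied to $S^*(P;m)$, I need $\delta(F_p)\subseteq F_{z^*}$, i.e. $\delta(F_{p(P;m)})\subseteq F_{p(P+e_{m-1}-2e_m;m)}$ — which is exactly the hypothesis~(\ref{eq:cond_diff_pPk}). For the second inclusion I need $\delta(F_{b^*})\subseteq F_s$, i.e. $\delta(F_{T^m_{P+e_m}})\subseteq F_{T^m_{P+e_{m-1}-e_m}}$.

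The key step is therefore to observe that~(\ref{eq:cond_diff_pPk}) is really a statement ``uniform in $P$'': for every $Q\in\Z^m$ one has $\delta(F_{p(Q;m)})\subseteq F_{p(Q+e_{m-1}-2e_m;m)}$, since the hypothesis is assumed for all $P$. Applying this with $Q=P+e_m$ gives $\delta(F_{T^m_{P+e_m}})\subseteq F_{T^m_{P+e_m+e_{m-1}-2e_m}}=F_{T^m_{P+e_{m-1}-e_m}}=F_s$, which is precisely the second condition needed for $S^*(P;m)$. For $S(P;m)$ the argument is the same: the first condition $\delta(F_p)\subseteq F_z$ reads $\delta(F_{T^m_P})\subseteq F_{T^m_{P-e_m}}$, and since $T^m_{P+e_{m-1}-2e_m}\subseteq T^m_{P-e_m}$ (the translation by $e_{m-1}-2e_m$ lands inside the translation by $-e_m$, because $e_{m-1}-e_m\le 0$ in the relevant lexicographic sense governing the $T^m$ nesting — this is the elementary ``shape'' fact about the downsets $T^m_X$ already used repeatedly, e.g. right before~(\ref{eq:recap-2page_m})) the hypothesis~(\ref{eq:cond_diff_pPk}) gives the stronger inclusion into $F_{T^m_{P+e_{m-1}-2e_m}}\subseteq F_z$; the second condition $\delta(F_b)\subseteq F_s$ reads $\delta(F_{T^m_{P+e_{m-1}}})\subseteq F_{T^m_{P+e_{m-1}-e_m}}$, which follows by applying the uniform hypothesis with $Q=P+e_{m-1}$ and then using $T^m_{Q+e_{m-1}-2e_m}\subseteq T^m_{Q-e_m}=T^m_{P+e_{m-1}-e_m}=F_s$ — again the same elementary nesting fact.

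Once these inclusions are in hand, Proposition~\ref{prop:delta} applies verbatim to the $4$-tuples $z^*\le s\le p\le b^*$ and $z\le s\le p\le b$, yielding $S^*(P;m)\cong S'^*(P;m)$ and $S(P;m)\cong S'(P;m)$ respectively. I expect the only mildly delicate point to be the bookkeeping of the translation inclusions among the downsets $T^m_X$ — specifically checking that $T^m_{X+e_{m-1}-2e_m}\subseteq T^m_{X-e_m}$ and that the hypothesis at index $Q=P+e_m$ (resp. $Q=P+e_{m-1}$) gives exactly the translated target one wants — but this is a routine consequence of the definition of $T^m_X$ via the lexicographic order on $\varphi_m(X)$ and has already been used implicitly in Section~\ref{sec:SSys_over_downsets} and in the discussion preceding~(\ref{eq:recap-2page_m}); no new idea is required. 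The rest is a direct citation of Proposition~\ref{prop:delta}.
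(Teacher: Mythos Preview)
Your proposal is correct and follows exactly the paper's approach: the paper's proof simply states that condition~(\ref{eq:cond_diff_pPk}) ensures the inclusions~(\ref{eq:cond_diff_isoterms}) hold for the $4$-tuples defining $S^*(P;m)$ and $S(P;m)$, referring back to~(\ref{eq:TP_k}) and~(\ref{eq:defS,S*}), and you have spelled out precisely that verification (applying the hypothesis at the shifted points $Q=P+e_m$ and $Q=P+e_{m-1}$, together with the elementary nesting $T^m_{X+e_{m-1}-e_m}\subseteq T^m_X$).
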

\begin{proof}
The condition~(\ref{eq:cond_diff_pPk}) ensures that the inclusions~(\ref{eq:cond_diff_isoterms}) are satisfied for the indices of terms of the form $S^*(P;m)$ and $S(P;m)$, as one can easily see recalling (\ref{eq:TP_k}) and (\ref{eq:defS,S*}).
\end{proof}

Now we only need to prove that the two differentials of the non-twisted tensor product $C_* (G_0) \otimes \cdots \otimes C_* (G_{m-1})\otimes C_* (B)$ and of the twisted tensor product $C_* (G_0) \otimes_{t_0} (\cdots \otimes_{t_{m-2}} (C_* (G_{m-1})\otimes_{t_{m-1}} C_* (B)))$ behave as stated in the hypothesis of Corollary \ref{cor:isopages}. Notice that now we are considering a filtration of the form $\{ F_{p(P;m)}\}_{P\in \Z^m}$, since $\{ F_{p(P;k)}\}_{P\in \Z^m, 1\le k\le m}$ (as defined before in this section) in general is not a valid filtration for the twisted tensor product of chain complexes, as emerges from the results of the previous sections and more intuitively from Figure \ref{fig:T2_Filtr_Tensor} (for the case $m=2$).  

\begin{proposition}
\label{prop:isopages2}
Consider the chain complexes $C_* (G_0) \otimes \cdots \otimes C_* (G_{m-1})\otimes C_* (B)$ and $C_* (G_0) \otimes_{t_0} (\cdots \otimes_{t_{m-2}} (C_* (G_{m-1})\otimes_{t_{m-1}} C_* (B)))$, with the filtration $\{ F_{p(P;m)} \}$ defined before. Let $\delta \= d'-d$ denote the difference between the two differentials (that is, the perturbation associated with the $(m+1)$-fold twisted tensor product). Then~(\ref{eq:cond_diff_pPk}) holds, for all $P\=(p_1,\ldots ,p_m)\in \Z^m$; we have therefore isomorphisms $S^*(P;m)\cong S'^*(P;m)$ and $S(P;m)\cong S'(P;m)$, for all $P\=(p_1,\ldots ,p_m)\in \Z^m$.
\end{proposition}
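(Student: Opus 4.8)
The plan is to verify the single hypothesis~(\ref{eq:cond_diff_pPk}) of Corollary~\ref{cor:isopages}, namely $\delta(F_{p(P;m)})\subseteq F_{p(P+e_{m-1}-2e_m;m)}$ for every $P\in\Z^m$; the asserted isomorphisms $S^*(P;m)\cong S'^*(P;m)$ and $S(P;m)\cong S'(P;m)$ then follow formally from that corollary. Throughout I would write a generator of $C_*(G_0)\otimes_{t_0}(\cdots\otimes_{t_{m-1}}C_*(B))$ as $g_0\otimes\cdots\otimes g_{m-1}\otimes b$ and its filtration multidegree (Definition~\ref{def:filtr_m_fibr} with $i=m$) as $X=(x_1,\ldots,x_m)=(\deg g_1,\ldots,\deg g_{m-1},\deg b)$, and set $\sigma_\ell(X)\=x_\ell+x_{\ell+1}+\cdots+x_m$, so that $\varphi_m(X)=(\sigma_1(X),\ldots,\sigma_m(X))$ and $p(P;m)=T^m_P=\{X\mid(\sigma_1(X),\ldots,\sigma_m(X))\le_{\text{lex}}(\sigma_1(P),\ldots,\sigma_m(P))\}$. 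A direct computation from~(\ref{eq:TP_k}) gives $\varphi_m(P+e_{m-1}-2e_m)=(\sigma_1(P)-1,\ldots,\sigma_{m-1}(P)-1,\sigma_m(P)-2)$, so~(\ref{eq:cond_diff_pPk}) says exactly that $\delta$ sends a generator whose multidegree $Q$ lies in $T^m_P$ to a combination of generators whose multidegrees $Q'$ satisfy $(\sigma_1(Q'),\ldots,\sigma_m(Q'))\le_{\text{lex}}(\sigma_1(P)-1,\ldots,\sigma_{m-1}(P)-1,\sigma_m(P)-2)$.

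The first step is structural: I would recall from the iterated applications of the Trivial and Basic Perturbation Lemmas that build the $(m+1)$-fold twisted tensor product (Section~\ref{sec:serre_spectral_systems} for $m=2$, Section~\ref{sec:m_fibrations} in general), combined with Proposition~\ref{prop:perturb_filtr} and the $1$-reducedness of $G_1,\ldots,G_{m-1},B$ (equivalently of each $E_1,\ldots,E_m$), that $\delta=d'-d$ splits as $\delta=\sum_{j=0}^{m-1}\delta_j$ with $\delta_j$ the contribution of the twisting operator $\tau_j$, and that each $\delta_j$ (i) acts as the identity on the tensor factors $C_*(G_0),\ldots,C_*(G_{j-1})$ --- hence does not change $\deg g_0,\ldots,\deg g_{j-1}$ --- and (ii) lowers the total degree of the block $C_*(G_{j+1})\otimes_{t_{j+1}}(\cdots\otimes_{t_{m-1}}C_*(B))$ by at least $2$ (this block being $C_*(B)$ when $j=m-1$, and all of $C_*(G_1)\otimes_{t_1}(\cdots)$ when $j=0$). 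Since $\delta_j$ is a piece of a differential it also lowers total degree by exactly $1$, so (i) forces $\sigma_\ell(Q')=\sigma_\ell(Q)-1$ for $1\le\ell\le j$ and (ii) forces $\sigma_{j+1}(Q')\le\sigma_{j+1}(Q)-2$; the partial sums $\sigma_\ell(Q')$ with $\ell>j+1$ are not controlled, but will turn out to be irrelevant. For $m=2$ this is precisely the decomposition $d'=d^\otimes+d^2+d^3+\cdots$ of the proof of Proposition~\ref{prop:F_tensor}, the $d^k$ being $\delta_0$ and the terms $\pm\id\otimes\bar d^\ell$ being $\delta_1$.

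With this in hand, the rest is an elementary lexicographic case check. Fix $j$ and $Q\in T^m_P$, and let $Q'$ be a multidegree occurring in $\delta_j$ applied to a generator of multidegree $Q$. If $j=0$ then $\sigma_1(Q')\le\sigma_1(Q)-2\le\sigma_1(P)-2<\sigma_1(P)-1$, so the comparison against the target is decided, strictly, already at coordinate $1$. If $j\ge1$, let $\ell_0\in\{1,\ldots,m\}\cup\{\infty\}$ be the first coordinate at which $\varphi_m(Q)$ and $\varphi_m(P)$ differ ($\ell_0=\infty$ if they coincide). For $\ell_0\le j$: coordinates $1,\ldots,\ell_0-1$ of $\varphi_m(Q')$ agree with those of the target, while at coordinate $\ell_0\le m-1$ one has $\sigma_{\ell_0}(Q')=\sigma_{\ell_0}(Q)-1<\sigma_{\ell_0}(P)-1$, so the comparison is decided strictly there. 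For $\ell_0>j$: coordinates $1,\ldots,j$ of $\varphi_m(Q')$ agree with those of the target (as $\sigma_\ell(Q)=\sigma_\ell(P)$ for $\ell\le j$), and at coordinate $j+1$ one has $\sigma_{j+1}(Q')\le\sigma_{j+1}(Q)-2\le\sigma_{j+1}(P)-2$, which is strictly below the target entry $\sigma_{j+1}(P)-1$ when $j+1\le m-1$, and is at most the target entry $\sigma_m(P)-2$ when $j+1=m$. In every case $Q'\in T^m_{P+e_{m-1}-2e_m}=p(P+e_{m-1}-2e_m;m)$, which gives~(\ref{eq:cond_diff_pPk}) and hence the proposition. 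The only genuinely delicate point is the structural step above: for $m=2$ it is explicit in Section~\ref{sec:serre_spectral_systems}, and for general $m$ it is the ``more complicated bookkeeping'' of Section~\ref{sec:m_fibrations}, resting at each level on the degree-lowering statement of Proposition~\ref{prop:perturb_filtr}.
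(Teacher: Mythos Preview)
Your proof is correct and follows essentially the same approach as the paper's, which simply refers to the $m=2$ analysis in the proof of Proposition~\ref{prop:F_tensor} and asserts that the general case is obtained by iterating that argument. You have made that iteration explicit --- decomposing $\delta$ as $\sum_j\delta_j$ with each $\delta_j$ acting trivially on the first $j$ tensor factors and lowering $\sigma_{j+1}$ by at least $2$ via Proposition~\ref{prop:perturb_filtr}, then carrying out the lexicographic case check against $\varphi_m(P+e_{m-1}-2e_m)$ --- which is precisely the ``more complicated bookkeeping'' the paper leaves to the reader in Section~\ref{sec:m_fibrations}.
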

\begin{proof}
The case for $m=2$ has already been detailed in the proof of Proposition~\ref{prop:F_tensor}; see also Figure \ref{fig:T2_Filtr_Tensor}. In order to prove the general case, it is sufficient to iterate the argument presented therein.
\end{proof}

\begin{remark}
Proposition~\ref{prop:isopages2} says that also for the terms of the page $\{S(P;m)\}$ immediately preceding the $2$-page $\{S^*(P;m)\}$ in the secondary connection (see Section \ref{sec:SSys_over_downsets}) we have isomorphisms when comparing the higher spectral sequences associated with the twisted and non-twisted tensor product of chain complexes. Nevertheless, we saw in Section \ref{sec:m_fibrations} that the vertical reductions of~(\ref{eq:mcrpr-efhm}) are well-behaved (that is, they give isomorphic terms) only from the $2$-page on. Therefore, keeping in mind that the higher spectral sequence we are considering is originally defined on the chain complex $C_*(G_0 \times_{\tau_0} ( \cdots \times_{\tau_{m-2}} (G_{m-1}\times_{\tau_{m-1}} B)))$, we cannot give an explicit formula for the terms of the page $\{S(P;m)\}$, but only for the $2$-page $\{S^*(P;m)\}$. 
\end{remark}

In summation, the results and considerations of the present section provide a proof of Theorem~\ref{thm:Serre-Matschke-simpl}, along with an explanation of the reason why the $2$-page is the only page of the secondary connection for which the presented arguments give an explicit formula.

\section{Examples and computations}
\label{sec:examples_and_computations}

The algorithms presented in Sections~\ref{sec:efhm_for_serrespectralsystems} and~\ref{sec:m_fibrations} have been implemented as a new module for the Kenzo system to compute higher Leray--Serre spectral sequences associated with towers of fibrations. The module enhances the previous module for computing generalized spectral sequences developed in~\cite{GR18}. All the programs are available at \url{https://github.com/ana-romero/Kenzo-external-modules}. 
In this section we present two different examples of application of our programs. 

\subsection{Whitehead tower of the sphere $S^3$}

As a first example of application of our programs for computing higher Leray--Serre spectral sequences of towers of fibrations, we consider the first stages of the {Whitehead} tower \cite{mccleary2001user,RS06,hatcher2002algebraic} for determining the homotopy groups of the sphere $S^3$, given by the following tower of fibrations: 

\begin{equation*}
\label{eq:s3-tower-of-2-fibrations}
\begin{tikzcd}
G\=K(\Zset_2,3) \arrow{r}{}  & E  \arrow{d}{} \\
M\=K(\Zset,2) \arrow{r}{}  & N \arrow{d}{} \\
 & B\=S^3
\end{tikzcd}
\end{equation*}

The sphere $S^3$ is represented in Kenzo as a simplicial set of finite type, with only two non-degenerate simplices: one simplex (the base point) in dimension~$0$ 
and another simplex in dimension $3$. In other words, the model chosen for the $3$-sphere can be seen as the quotient $\Delta^3 / \partial \Delta^3$. Let us observe that this model of $S^3$ is not a Kan complex. Eilenberg--MacLane spaces $K(\pi,n)$'s are represented in Kenzo by means of the classifying space constructor (see \cite{May67} for details). In particular, if the group $\pi$ is not finite (for instance $\Z$), then the set of $m$-simplices of $K(\pi,n)$ for every $m\geq n$ is infinite and hence  $K(\pi,n)$ is of infinite type. This model of Eilenberg--MacLane spaces 
provides simplicial groups, which satisfy the Kan property.

The simplicial set $N$ can be seen then as a twisted Cartesian product $N\= K(\Zset,2) \times_{\tau_1} S^3$ and the simplicial set $E$ as $E \= K(\Zset_2,3) \times_{\tau_0} N = K(\Zset_2,3) \times_{\tau_0} (K(\Zset,2) \times_{\tau_1} S^3)$, for twisting operators $\tau_0,\tau_1$ suitably defined~\cite{RS06}. One can separately consider the ordinary Leray--Serre spectral sequences of the two fibrations (respectively converging to the homology of $N$ and $E$), whose $2$-pages $\{ E^2_{p,q}\}$ are represented in Figure~\ref{fig:serre-ss1}. The groups of the first (left) Leray--Serre spectral sequence are given by the formula $E^2_{p,q} \cong H_p(S^3; H_q (K(\Z,2))$; those of the second (right) spectral sequence satisfy  $E^2_{p,q} \cong H_p(N ; H_q(K(\Z_2,3))$. For a description of the homology groups of $K(\Z_2,3)$, see \cite[\S 23]{EM54} or \cite[\S C.2]{Cle02}. Let us observe that, in the second spectral sequence, for total degree $n=p+q=5$ we obtain in this case only one non-null group $E^2_{0,5}\cong \Z_2$, which correspond to $\pi_5(S^3)\cong \Z_2$.

\footnotesize

\begin{figure}
\begin{tikzpicture}
\draw (0,0) node{
\begin{xy}<0.6cm,0cm>:<0cm,0.6cm>::
 (0,0)*{\hspace{0pt}} ;
 (0,0)*{\Z} ; (1,0)*{0} ; (2,0)*{0} ; (3,0)*{\Z} ; (4,0)*{0} ; (5,0)*{0} ; (6,0)*{0} ; (7,0)*{0} ;
 (0,1)*{0} ; (1,1)*{0} ; (2,1)*{0} ; (3,1)*{0} ;  (4,1)*{0} ; (5,1)*{0} ; (6,1)*{0} ;
 (0,2)*{\Z} ; (1,2)*{0} ; (2,2)*{0} ; (3,2)*{\Z} ; (4,2)*{0} ; (5,2)*{0} ;
 (0,3)*{0} ; (1,3)*{0} ; (2,3)*{0} ; (3,3)*{0} ; (4,3)*{0} ;
 (0,4)*{\Z} ; (1,4)*{0} ; (2,4)*{0} ; (3,4)*{\Z} ; 
 (0,5)*{0} ; (1,5)*{0} ; (2,5)*{0} ; 
 (0,6)*{\Z} ; (1,6)*{0} ; 
  (0,7)*{0 } ;
 (7.6,0.1) *!D{p} ; (0.1,7.6) *!L{q} ; 
 \ar@{.>} (0,0);(8,0)
 \ar@{.>} (0,0);(0,8)
 \end{xy}
};
\draw (6,0) node{
\begin{xy}<0.6cm,0cm>:<0cm,0.6cm>::
 (0,0)*{\hspace{0pt}} ;
 (0,0)*+{\Z} ; (1,0)*{0}     ; (2,0)*{0} ; (3,0)*{0} ; (4,0)*{\Zset_2} ;
 (5,0)*{0}   ; (6,0)*+{\Z_3} ; (7,0)*{\Z_2} ; 
 (0,1)*{0} ; (1,1)*{0} ; (2,1)*{0} ; (3,1)*{0} ;
 (4,1)*{0} ; (5,1)*{0} ; (6,1)*{0} ; 
 (0,2)*{0} ; (1,2)*{0} ; (2,2)*{0} ; (3,2)*{0} ;
 (4,2)*{0} ; (5,2)*{0} ; 
 (0,3)*{\Z_2} ; (1,3)*{0} ; (2,3)*{0} ; (3,3)*{0} ;
 (4,3)*{\Z_2} ; 
 (0,4)*{0} ; (1,4)*{0} ; (2,4)*{0} ; (3,4)*{0} ;
 %
 (0,5)*+{\Z_2} ; (1,5)*{0} ; (2,5)*{0} ;
 %
 (0,6)*+{\Z_2} ; (1,6)*{0} ; 
 %
  (0,7)*+{\Z_2} ;
 %
 %
 (7.6,0.1) *!D{p} ; (0.1,7.6) *!L{q} ; 
 \ar@{.>} (0,0);(8,0)
 \ar@{.>} (0,0);(0,8)
 \end{xy}
 };
 \end{tikzpicture}
 \caption{Pages $\{ E^2_{p,q}\}$ of the Leray--Serre spectral sequences of the fibrations $K(\Zset,2) \rightarrow N \rightarrow S^3$ (left) and $K(\Zset_2,3) \rightarrow E \rightarrow N$ (right), represented up to total degree $n=p+q=7$.}
\label{fig:serre-ss1}
 \end{figure}
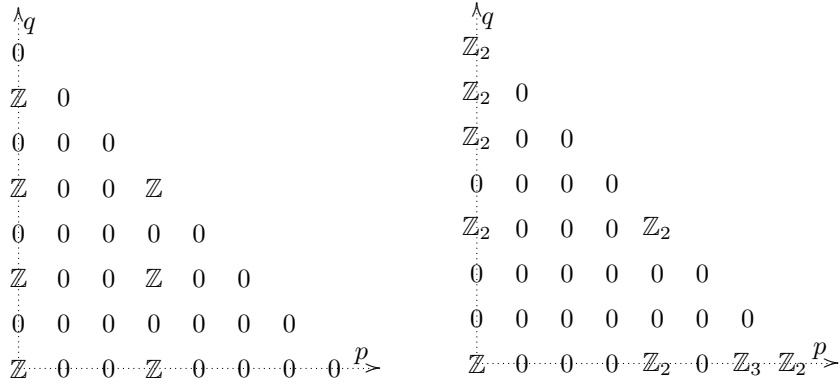

\normalsize

These fibrations are built in Kenzo with the following statements.

\footnotesize

\begin{verbatim}
> (progn
	(setf B (sphere 3))
	(setf k1 (chml-clss B 3))
	(setf t1 (z-whitehead B k1))
	(setf N (fibration-total t1))
	(setf k0 (chml-clss N 4))
	(setf t0 (z2-whitehead N k0))
	(setf E (fibration-total t0)))
[K298 Simplicial-Set]
\end{verbatim}

\normalsize
The result is the simplicial set \texttt{K298}, corresponding to the total space $E$. The simplicial set $E$ is not of finite type, since one of the factors of the twisted Cartesian product, namely $K(\Z,2)$, is of infinite type. Therefore, as we mentioned in Section \ref{sec:efhm}, it is not possible to apply standard algorithms based on matrix operations for computing homology groups, so one cannot directly compute the associated higher Leray--Serre spectral sequence. However, Kenzo automatically computes the effective homology of the object, given by an equivalence which is stored in the slot \texttt{efhm} and can be accessed as follows.

\footnotesize

\begin{verbatim}
> (efhm E)
[K608 Homotopy-Equivalence K298 <= K598 => K594]
\end{verbatim}

\normalsize

Now, in order to determine the associated higher Leray--Serre spectral sequence, we  filter the chain complex $C_*(E)$ canonically associated with the space $E$ by means of the generalized filtration introduced in Definition~\ref{defn:2crpr-filtration}. 

\footnotesize

\begin{verbatim}
>(setf Ef (change-chcm-to-gflcc E (dz2) crpr2-gflin 'crpr2-gflin))
[K611 Generalized-Filtered-Chain-Complex]
\end{verbatim}

\normalsize

In a similar way, the effective chain complex must be filtered by means of the generalized filtration introduced in Definition~\ref{defn:2tnpr-filtration}.

\footnotesize

\begin{verbatim}
> (setf Df (change-chcm-to-gflcc (rbcc (efhm e)) (dz2) tnpr2-gflin 'tnpr2-gflin))
[K613 Generalized-Filtered-Chain-Complex]
\end{verbatim}

\normalsize

By virtue of our results presented in Sections~\ref{sec:efhm_for_serrespectralsystems} and~\ref{sec:m_fibrations}, we know that all the corresponding terms of the higher spectral sequences of both chain complexes are isomorphic from the $2$-page on. In this way, using our algorithms we can determine the terms of the higher Leray--Serre spectral sequence of the total space $E$ (of infinite type) by computing the corresponding terms of the higher spectral sequence of the associated effective chain complex. To this aim, we use  the function \texttt{gen-spsq-group} (which receives the filtered chain complex, four elements $z\leq s \leq p \leq b$ in the poset of indices and the total degree $n$), and in particular the $2$-page $\{ S^*(P;2) \}$ can be determined in an easy way by means of the function \texttt{e2-gspsq-group} (which inputs the filtered chain complex, the point  $P = (p_1, p_2) \in \Z^2$ and the total degree~$n$). 

For instance, the groups of the $2$-page which are non-null for total degree $n=5$ are the following:

\footnotesize

\begin{verbatim}
> (e2-gspsq-group E '(0 0) 5)
Generalized spectral sequence S[((1 -2)),((1 -1)),((0 0)),((0 1) (1 0))]_{5}
Component Z/2Z
> (e2-gspsq-group E '(2 0) 5)
Generalized spectral sequence S[((0 0) (1 -1) (3 -2)),((0 1) (1 0) (3 -1)),
((0 1) (2 0)),((0 2) (2 1) (3 0))]_{5}
Component Z/2Z
> (e2-gspsq-group E '(2 3) 5)
Generalized spectral sequence S[((0 3) (1 2) (3 1) (4 0)),((0 4) (1 3) (3 2) (4 1)
(5 0)),((0 4) (2 3) (3 2) (4 1) (5 0)),((0 5) (2 4) (3 3) (4 2) (5 1) (6 0))]_{5}
Component Z
\end{verbatim}

\normalsize
Notice that the downsets $z,s,p,b$ indexing each term are stored as lists of points: each list of points in $\Z^2$ represents the smallest downset of $D(\Z^2)$ containing those points. Let us also observe that the obtained groups satisfy the formula $S^*_n(P;2)\cong H_{p_2}(B;H_{p_{1}}(M; H_{n-p_1-p_2} (G)))$ of Theorem \ref{thm:Serre-Matschke-simpl}.

As mentioned in Section~\ref{sec:SSys_over_downsets}, the \ql following\qr\ terms of the higher spectral sequence  allow to connect the $2$-page of the higher Leray--Serre spectral sequence to the homology $H_*(E)$ in many possible ways; in particular we have provided functions computing \emph{lexicographic connections}, whose terms have indices $z,s,p,b$ which can be defined by giving a point $P\in \Z^2$ and another parameter $Q\in \Z^2$, called \emph{offset} (see~\cite{matschke2013successive} for details). The function \texttt{lexcon-gspsq-group} inputs the filtered chain complex, the point $P=(p_1,p_2)\in \Z^2$, the offset $Q=(q_1,q_2) \in \Z^2$ and the total degree~$n$.

\footnotesize{
\begin{verbatim}
> (lexcon-gspsq-group E '(0 0) '(1 1) 5)
Generalized spectral sequence S[((0 -3)),((1 -1)),((0 0)),((1 2) (2 1) (3 0))]_{5}
Component Z/2Z
> (lexcon-gspsq-group E '(2 0) '(1 1) 5)
Generalized spectral sequence S[((0 -2) (2 -3)),((0 1) (1 0) (3 -1)),((0 1) (2 0)),
((0 4) (1 3) (3 2) (4 1) (5 0))]_{5}
NIL
> (lexcon-gspsq-group E '(2 3) '(1 1) 5)
Generalized spectral sequence S[((0 1) (2 0)),((0 4) (1 3) (3 2) (4 1) (5 0)),
((0 4) (2 3) (3 2) (4 1) (5 0)),((0 7) (1 6) (3 5) (4 4) (5 3) (6 2) (7 1) 
(8 0))]_{5}
NIL
\end{verbatim}
}

\normalsize

Let us observe that, although we have seen before that in the $2$-page of the higher spectral sequence there are three non-null terms of total degree $n=5$, in the next page, computed with offset $Q=(1,1)$, only one of these groups survives, namely $S^*_5(P;2)\cong H_0(B;H_0(M;H_5(G)))$, corresponding to the point $P=(0,0)$.

We can also determine the \ql final\qr\ groups of the higher spectral sequence, which intuitively are the ones that survive at the end of the lexicographic connection. In this case, for total degree $n=5$ we obtain exactly one non-null group, isomorphic to $\Z_2$,  which corresponds again to $S^*_5(P;2)\cong H_0(B;H_0(M;H_5(G)))$ and reproduces the well-known result $\pi_5(S^3)\cong \Z_2$.

\footnotesize

\begin{verbatim}
> (final-gspsq-group E 5)
Generalized spectral sequence S[((-1 -1)),((-1 -1)),((0 9) (1 8) (2 7) (3 6) (5 5)
(6 4) (7 3) (8 2) (9 1) (10 0)),((0 9) (1 8) (2 7) (3 6) (5 5) (6 4) (7 3) (8 2) 
(9 1) (10 0))]_{5}
Component Z/2Z
\end{verbatim}

\normalsize

The same programs can be applied to a larger number of fibrations. For instance, if we consider the next fibration of the {Whitehead} tower, namely $K(\Zset_2,4) \rightarrow X \rightarrow E$, we can determine for example the term of the $2$-page of the higher spectral sequence corresponding to $P=(0,6,0)$, which turns out to be $\Z$. We omit the  indices $z,s,p,b$ in the outputs.

\footnotesize

\begin{verbatim}
> (e2-gspsq-group X '(0 6 0) 6)
Generalized spectral sequence S[...]_{6}
Component Z
\end{verbatim}

\normalsize

However, the following term in the lexicographic connection with offset $Q=(1,1,1)$ is $\Z_{3}$; this represents a step of the convergence to the final group of the higher spectral sequence, which corresponds to the well-known result $\pi_6(S^3)\cong \Z_{12}$.

\footnotesize

\begin{verbatim}
> (lexcon-gspsq-group X '(0 6 0) '(1 1 1) 6)
Generalized spectral sequence S[...]_{6}
Component Z/3Z
> (final-gspsq-group X 6)
Generalized spectral sequence S[...]_{6}
Component Z/12Z
\end{verbatim}

\normalsize

As an indication of the efficiency of the algorithms, the last computation required 12 seconds on a normal laptop (Intel Core i5-8250U, 8Gb RAM). However, the algorithms producing the effective homology of a tower of fibrations in Kenzo have exponential complexity in the number of generators, so that the computation of the higher spectral sequences and homotopy groups are limited to relatively low dimensions (for instance, on the same laptop, Kenzo is able to determine the homotopy groups of $S^3$ up to dimension 7).

As a comparison with the computation of various subsequent spectral sequences, let us remark that the computation of all terms of the $2$-page of the higher spectral sequence for degree $n=6$ (84 groups) took 2 minutes and 14 seconds, while the computation of the groups $E^2_{p,q}$ of the three Leray--Serre spectral sequences corresponding to the three fibrations (21 groups) took only 6 seconds. This increase in computational cost for the higher spectral sequence is due to the fact that computing the groups of Definition~\ref{def:sp_sys_def1} is slower when the indices are elements in a poset like $\Z^m$ or $D(\Z^m)$, with $m>1$.

\subsection{Effective example}

Consider now a second example of tower of fibrations, given by the first stages of the Whitehead tower of the Eilenberg-MacLane space $K(\Zset_2,2)$:

\begin{equation*}
\label{eq:z2-tower-of-3-fibrations}
\begin{tikzcd}
G_0\= K(\Zset_2,5) \arrow{r}{}  & X_5  \arrow{d}{} \\
G_1\= K(\Zset_2,4) \arrow{r}{}  & X_4  \arrow{d}{} \\
G_2\= K(\Zset_2,3) \arrow{r}{}  & X_3 \arrow{d}{} \\
 & B\= K(\Zset_2,2)
\end{tikzcd}
\end{equation*}

In this case all the associated chain complexes are of finite type (effective), so that we can determine the higher spectral sequence without using the effective homology method. However, due to efficiency problems, it is better to use the effective homology of the space $X_5 \cong K(\Zset_2,5)\times_{\tau_0}(K(\Zset_2,4)\times_{\tau_1}(K(\Zset_2,3)\times_{\tau_2}K(\Zset_2,2)))$, which is obtained via the composition of several reductions as in diagram~(\ref{eq:mcrpr-efhm}).

In accordance with Theorem~\ref{thm:2crpr-2page_S_effective}, we have used our programs to verify (up to total degree $n=3$) that the terms of the $2$-page $\{ S_n^*(P;3)\}$ computed with the two methods coincide. 
The only non-null terms we obtained (with both methods) are the following:

{ 
\footnotesize 
\begin{verbatim}
(0 0 0) Generalized spectral sequence S[NIL,NIL,((0 0 0)),((0 0 1) (0 1 0)
(1 0 0))]_{0}
Component Z

(0 0 2) Generalized spectral sequence S[((0 1 0) (1 0 0)),((0 1 1) (0 2 0)
(1 0 1) (1 1 0) (2 0 0)),((0 0 2) (0 1 1) (0 2 0) (1 0 1) (1 1 0) (2 0 0)),
((0 0 3) (0 1 2) (0 2 1) (0 3 0) (1 0 2) (1 1 1) (1 2 0) (2 0 1) (2 1 0)
(3 0 0))]_{2}
Component Z/2Z

(0 3 0) Generalized spectral sequence S[((1 0 1) (1 1 0) (2 0 0)),((1 0 2)
(1 1 1) (1 2 0) (2 0 1) (2 1 0) (3 0 0)),((0 3 0) (1 0 2) (1 1 1) (1 2 0)
(2 0 1) (2 1 0) (3 0 0)),((0 3 1) (0 4 0) (1 0 3) (1 1 2) (1 2 1) (1 3 0)
(2 0 2) (2 1 1) (2 2 0) (3 0 1) (3 1 0) (4 0 0))]_{3}
Component Z/2Z
\end{verbatim}
}
\noindent where the triple of integers at the beginning of a line represents the point $P=(p_1,p_2,p_3)\in \Z^3$ for which we are computing the term $S^*_n (P;3)$.

Using effective homology we were able to efficiently compute the remaining non-null terms up to total degree $n=5$ (we omit again the four indices $z,s,p,b$ in the outputs):

{ 
\footnotesize 
\begin{verbatim}
(0 0 4) Generalized spectral sequence S[...]_{4}
Component Z/4Z

(4 0 0) Generalized spectral sequence S[...]_{4}
Component Z/2Z

(0 0 0) Generalized spectral sequence S[...]_{5}
Component Z/2Z

(0 0 5) Generalized spectral sequence S[...]_{5}
Component Z/2Z

(0 3 2) Generalized spectral sequence S[...]_{5}
Component Z/2Z

(0 5 0) Generalized spectral sequence S[...]_{5}
Component Z/2Z
\end{verbatim}
}

We can now show with an example that, for the terms of the pages preceding the $2$-page, one does not necessarily obtain isomorphic results using the two methods. For example, we found out that the term $S_3(P;1)$ with $P=(0,0,3)$ of the $1$-page is different if computed directly

{ 
\footnotesize 
\begin{verbatim}
> (e1-eff-gspsq-group X5 '(0 0 3) 3)
Generalized spectral sequence S[...]_{3}
Component Z
Component Z
Component Z
Component Z
Component Z
Component Z
Component Z
Component Z
\end{verbatim}
}

\noindent or using effective homology

{ 
\footnotesize 
\begin{verbatim}
> (e1-eff-gspsq-group effX5 '(0 0 3) 3)
Generalized spectral sequence S[...]_{3}
Component Z
\end{verbatim}
}


\section{Conclusions}
\label{sec:conclusions}

In this work we introduced specific methods to compute the higher Leray--Serre spectral sequence associated with a tower of fibrations. The programs we developed allow to determine terms and differential maps of a higher Leray--Serre spectral sequence, and have been released as a new module for the Computer Algebra system Kenzo. By virtue of the effective homology technique, our programs are able to handle objects of infinite type, and can therefore be employed in a wide variety of situations arising in algebraic topology. A first fundamental step towards making the higher Leray--Serre spectral sequence computable via algorithms was to rephrase its construction in a simplicial framework; in this paper, we include some relevant theoretical results on this simplicial version of the construction.   

We are convinced that some of the methods we introduced in this work can be adapted to compute other relevant spectral systems and higher spectral sequences described in \cite{matschke2013successive}, such as the Adams--Novikov and Eilenberg--Moore spectral systems or interesting instances of the higher Grothendieck spectral sequence, and we believe this represents an interesting direction for future research.

\section*{Acknowledgements}

We want to thank Francis Sergeraert for helpful discussions at an early stage of the work.

Supported by the Basque Government through the BERC 2018-2021 program and by the Spanish Ministry of Science, Innovation and Universities, BCAM Severo Ochoa accreditation SEV-2017-0718. Partially supported by the Spanish Ministry of Science, Innovation and Universities, project MTM2017-88804-P.


\bibliographystyle{plain}

\end{document}